 \newtheorem{prop}{Proposition}
\newcommand{\vast}{\bBigg@{4.9}}
\newcommand{\Vast}{\bBigg@{5}}
\begin{document}
\title[\hfilneg EJDE-2016/84\hfil Fractional differential equations]
{N-barrier maximum principle for degenerate elliptic systems and its application}

\author[L.-C. Hung \hfil 
\hfilneg]
{Li-Chang Hung}

\address{Li-Chang Hung \newline
Department of Mathematics, National Taiwan University, Taipei, Republic of Taiwan}
\email{lichang.hung@gmail.com}

\author[H.-F. Liu \hfil 
\hfilneg]
{Hsiao-Feng Liu}

\address{Hsiao-Feng Liu \newline
Department of Chemical Engineering, National Taiwan University, Taipei, Taiwan}
\email{b04504060@ntu.edu.tw}

\author[C.-C. Chen \hfil 
\hfilneg]
{Chiun-Chuan Chen}

\address{Chiun-Chuan Chen \newline
Department of Mathematics, National Taiwan University, Taipei, Taiwan}
\email{chchchen@ntu.edu.tw}

\subjclass[2010]{Primary 35B50; Secondary 35C07, 35K57}
\keywords{Maximum principle; traveling wave solutions;
\hfill\break\indent degenerate elliptic systems; reaction-diffusion equations; Lotka-Volterra}

\begin{abstract}
In this paper, we prove the N-barrier maximum principle, which extends the result in \cite{JDE-16} from linear diffusion equations to nonlinear diffusion equations, for a wide class of degenerate elliptic systems of porous medium type. The N-barrier maximum principle provides a priori upper and lower bounds of the solutions to the above-mentioned degenerate nonlinear diffusion equations including the Shigesada-Kawasaki-Teramoto model as a special case. As an application of the N-barrier maximum principle to a coexistence problem in ecology, we show the nonexistence of waves in a three-species degenerate elliptic systems. 
\end{abstract}

\maketitle
\numberwithin{equation}{section}
\newtheorem{theorem}{Theorem}[section]
\newtheorem{lemma}[theorem]{Lemma}
\newtheorem{remark}[theorem]{Remark}
\newtheorem{corollary}[theorem]{Corollary}
\newtheorem{example}[theorem]{Example}
\allowdisplaybreaks

\section{Introduction and main results}

The main perspective of the paper is to establish the \textit{N-barrier maximum principle} (NBMP, see\cite{JDE-16,NBMP-n-species}) for degenerate elliptic systems. To be more precise, we study 
\begin{equation}\label{eqn: degenerate autonomous system of n species}
d_i\,(u_i^{m})_{xx}+\theta\,(u_i)_{x}+u_i^{l_i}\,f_i(u_1,u_2,\cdots,u_n)=0, \ \ x\in\mathbb{R}, \ \ i=1,2,\cdots,n,
\end{equation}
where $u_i=u_i(x)$, $d_i$, $l_i>0$, $\theta\in\mathbb{R}$, and $f_i(u_1,u_2,\cdots,u_n)\in C^0(\mathbb{R^{+}}\times\mathbb{R^{+}}\times\cdots\times\mathbb{R^{+}})$ for $i=1,2,\cdots,n$. The NBMP for the linear diffusion case $m=1$ has been presented in \cite{JDE-16,NBMP-n-species}. In this sequel we will deal with the nonlinear diffusion case $m>1$ based on the N-barrier method developed in \cite{JDE-16,NBMP-n-species}.


We couple \eqref{eqn: degenerate autonomous system of n species} with the prescribed Dirichlet conditions at $x=\pm\infty$:
\begin{equation}\label{eqn: autonomous system of n species BC}
(u_1,u_2,\cdots,u_n)(-\infty)=\textbf{e}_{-},\quad (u_1,u_2,\cdots,u_n)(\infty)=\textbf{e}_{+},
\end{equation}
where 
\begin{equation}\label{eqn: e- and e+}
\textbf{e}_{-}, \textbf{e}_{+}\in\Big\{ (u_1,u_2,\cdots,u_n) \;\Big|\; u_i^{l_i}\,f_i(u_1,u_2,\cdots,u_n)=0\;  (i=1,2,\cdots,n), u_1,u_2,\cdots,u_n\ge 0\Big\}
\end{equation}
are the equilibria of \eqref{eqn: degenerate autonomous system of n species} which connect the solution $(u_1,u_2,\cdots,u_n)(x)$ at $x=-\infty$ and $x=\infty$. This leads to the boundary value problem of \eqref{eqn: degenerate autonomous system of n species} and \eqref{eqn: autonomous system of n species BC}:
\begin{equation*}
\textbf{(BVP)}
\begin{cases}
\vspace{3mm}
d_i\,(u_i^{m})_{xx}+\theta\,(u_i)_{x}+u_i^{l_i}\,f_i(u_1,u_2,\cdots,u_n)=0, \ \ x\in\mathbb{R}, \ \ i=1,2,\cdots,n, \\
(u_1,u_2,\cdots,u_n)(-\infty)=\textbf{e}_{-},\quad (u_1,u_2,\cdots,u_n)(\infty)=\textbf{e}_{+}.
\end{cases}
\end{equation*}

Throughout, we assume, unless otherwise stated, that the following hypothesis on $f_i(u_1,u_2,\cdots,u_n)$ is satisfied:
\begin{itemize}
\item [$\mathbf{[H]}$]
For $i=1,2,\cdots,n$, there exist $\bar{u}_i>\underaccent\bar{u}_i>0$  such that
\begin{eqnarray*}
f_i(u_1,u_2,\cdots,u_n)\ge 0 &\text{ whenever } (u_1,u_2,\cdots,u_n)\in \underaccent\bar{\mathcal{R}};\\\\
f_i(u_1,u_2,\cdots,u_n)\le 0 &\text{ whenever } (u_1,u_2,\cdots,u_n)\in \bar{\mathcal{R}},
\end{eqnarray*}
where 
\begin{eqnarray*}
\underaccent\bar{\mathcal{R}}&=&\bigg\{ (u_1,u_2,\cdots,u_n)\;\Big|\; \sum_{i=1}^{n}\frac{\displaystyle u_i}{\displaystyle\underaccent\bar{u}_i}\le 1,\; u_1,u_2,\cdots,u_n> 0 \bigg\};\\
\bar{\mathcal{R}}&=&\bigg\{ (u_1,u_2,\cdots,u_n)\;\Big|\; \sum_{i=1}^{n}\frac{\displaystyle u_i}{\displaystyle\bar{u}_i}\ge 1,\; u_1,u_2,\cdots,u_n\ge 0 \bigg\}.
\end{eqnarray*}
\end{itemize}
Also, we denote by $\chi$ the characteristic function: 
\begin{equation}\label{eqn: chi 1 or 0}
\chi
=
\begin{cases}
\vspace{3mm}
0,
\quad \text{if} \quad  \text{\bf e}_{+}=(0,\cdots,0) \quad \text{or} \quad \text{\bf e}_{-}=(0,\cdots,0),\\
1,
\quad \text{otherwise}.
\end{cases}
\end{equation}

The main contribution of the N-barrier maximum principle is that it provides rather generic a priori upper and lower bounds for the linear combination of the components of a vector-valued solution which hold for a wide class of reaction terms and boundary conditions. In particular, the key ingredient in the poof relies on the delicate construction of an appropriate N-barrier which allows us to establish the a priori estimates by contradiction. 


 

\begin{theorem}[\textbf{NBMP for $m=1$, \cite{JDE-16,NBMP-n-species}}]\label{thm: NBMP for m=1}
Assume that $\mathbf{[H]}$ holds. Given any set of $\alpha_i>0$ $(i=1,2,\cdots,n)$, suppose that $(u_1(x),u_2(x),\cdots,u_n(x))$ is a nonnegative $C^2$ solution to \textbf{(BVP)} with $m=1$. Then
\begin{equation}\label{eqn: upper and lower bounds of p_generalized}
\underaccent\bar{\lambda}
\leq \sum_{i=1}^{n} \alpha_i\,u_i(x) \leq
\bar{\lambda}, \ \ x\in\mathbb{R},
\end{equation}
where
\begin{eqnarray}
\bar{\lambda} & = &
\frac
{\Big(
\displaystyle
\max_{1\le i \le n}
\alpha_i\,\bar{u}_i
\Big)
\Big(
\displaystyle
\max_{1\le i \le n} d_i 
\Big)}
{
\displaystyle
\min_{1\le i \le n} d_i
} 
,
\\
\underaccent\bar{\lambda} & = & 
\frac
{\Big(
\displaystyle
\min_{1\le i \le n}
\alpha_i\,\underaccent\bar{u}_i
\Big)
\Big(
\displaystyle
\min_{1\le i \le n} d_i 
\Big)}
{
\displaystyle
\max_{1\le i \le n} d_i
} 
\chi,   
\end{eqnarray}
with $\chi$ given by \eqref{eqn: chi 1 or 0}.

\end{theorem}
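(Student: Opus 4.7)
My plan is to prove the upper bound $w(x) := \sum_{i=1}^{n} \alpha_i u_i(x) \leq \bar{\lambda}$ by contradiction; the lower bound follows by the analogous argument using the lower region $\underaccent\bar{\mathcal{R}}$, with the factor $\chi$ accounting for the degenerate case in which one of the equilibria is the origin, where no positive lower bound can hold. The first step is to translate ``$w$ large'' into sign information supplied by $\mathbf{[H]}$. Setting $M := \max_{1 \leq j \leq n} \alpha_j \bar{u}_j$, whenever $w(x) \geq M$ one has
\begin{equation*}
\sum_{i=1}^n \frac{u_i(x)}{\bar{u}_i} = \sum_{i=1}^n \frac{\alpha_i u_i(x)}{\alpha_i \bar{u}_i} \geq \frac{w(x)}{M} \geq 1,
\end{equation*}
so $(u_1(x), \ldots, u_n(x)) \in \bar{\mathcal{R}}$ and therefore $f_i(u_1,\ldots,u_n) \leq 0$ for every $i$ at that point.

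Next I would derive a single scalar identity by multiplying the $i$-th equation (with $m = 1$) by $\alpha_i$ and summing. With $H(x) := \sum_{i=1}^n \alpha_i d_i u_i(x)$ this yields
\begin{equation*}
H''(x) + \theta\, w'(x) = -\sum_{i=1}^n \alpha_i\, u_i^{l_i} f_i(u_1, \ldots, u_n),
\end{equation*}
whose right-hand side is non-negative wherever $w \geq M$, so that $J := H' + \theta w$ is non-decreasing there. The elementary envelope
\begin{equation*}
(\min_i d_i)\, w(x) \leq H(x) \leq (\max_i d_i)\, w(x)
\end{equation*}
is what converts information about $H$ into information about $w$ and vice versa.

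Now assume for contradiction that $w(x^*) > \bar{\lambda} = M\, \max_i d_i / \min_i d_i$ at some $x^*$. Because $\bar\lambda \geq M$, the connected component $I$ of $\{w > \bar\lambda\}$ containing $x^*$ lies entirely in the region where the monotonicity of $J$ is available, and the envelope above supplies a quantitative lower bound for $H$ inside $I$ together with a quantitative upper bound for $H$ at $\partial I$ (where $w$ equals or tends to $\bar\lambda$ or to a boundary equilibrium value that, by the same weighted inequality, cannot exceed $\bar\lambda$). Following the N-barrier construction of \cite{JDE-16, NBMP-n-species}, I would then assemble an explicit piecewise-smooth auxiliary function --- the ``N-barrier'' --- whose value and slope are calibrated to lie below $H$ at the endpoints of $I$ yet above $H$ at an interior extremum selected from the monotonicity of $J$, producing a geometric inconsistency. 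The ratio $\max_i d_i / \min_i d_i$ in $\bar\lambda$ is precisely the amount of slack needed for the barrier to close up.

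The principal obstacle is that, because the diffusivities $d_i$ are not equal, the natural elliptic operator acting on the sum is $H''$ rather than a scalar multiple of $w''$, so a scalar maximum principle is not directly available for $w$. Introducing the diffusion-weighted version $H$ and controlling it against $w$ by the two-sided envelope $(\min d_i)\,w \leq H \leq (\max d_i)\,w$ is exactly what bridges this gap, at the quantitative cost of the factor $\max_i d_i / \min_i d_i$ that appears in $\bar\lambda$ and $\underaccent\bar\lambda$.
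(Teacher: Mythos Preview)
The paper does not prove this theorem here; it is quoted from \cite{JDE-16,NBMP-n-species}. Nevertheless, the $m>1$ proof in Section~\ref{sec:proof of NBMP} displays the N-barrier mechanism, and the $m=1$ argument in the cited papers follows the same template. Your setup matches that template: you correctly introduce $w=p=\sum\alpha_i u_i$ and $H=q=\sum\alpha_i d_i u_i$, derive the combined identity $H''+\theta\,w'+F=0$, observe that $w\ge M$ forces the solution into $\bar{\mathcal R}$ so that $F\le 0$, and record the envelope $(\min d_i)\,w\le H\le(\max d_i)\,w$. You also correctly diagnose that the ratio $\max d_i/\min d_i$ is the price paid for passing between $H$ and $w$.

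Where your proposal goes astray is in the description of the N-barrier itself. It is \emph{not} a ``piecewise-smooth auxiliary function'' of $x$ to be compared with $H$; it is a nested family of level sets in $u$-space --- two $q$-levels $q=\lambda_1$, $q=\lambda_2$ sandwiching one $p$-level $p=\eta_1$ --- chosen so that $\{q\ge\lambda_1\}\subset\bar{\mathcal R}$ and $\{q\ge\lambda_2\}\subset\{p\ge\eta_1\}\subset\{q\ge\lambda_1\}$. The contradiction is obtained by \emph{integration}, not by a comparison principle: assume $q(z)=\max q>\lambda_2$, let $z_1$ be the first point to the left (if $\theta\le 0$) where $q=\lambda_1$, and integrate the combined identity over $[z_1,z]$. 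One gets $q'(z)-q'(z_1)+\theta\big(p(z)-p(z_1)\big)+\int_{z_1}^{z}F=0$, and each term has a definite sign: $q'(z)=0$ at the maximum, $q'(z_1)\ge 0$ at the first crossing, $p(z)>\eta_1>p(z_1)$ by the nesting, and $\int F\le 0$ since the trajectory stays in $\bar{\mathcal R}$ on $[z_1,z]$. The $p$-level $\eta_1$ is what makes the drift term $\theta(p(z)-p(z_1))$ come out with the right sign; your argument, which works only on the component $I=\{w>\bar\lambda\}$ and tracks the monotone quantity $J=H'+\theta w$, lacks this middle layer and so does not by itself force a sign on the $\theta$-term or produce a contradiction. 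Once $q\le\lambda_2$ is established, $p\le\eta_2$ follows from one more application of the envelope, and $\eta_2$ is exactly $\bar\lambda$.
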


\textbf{(BVP)} arises from the study of traveling waves in the Shigesada-Kawasaki-Teramoto (SKT) model
\begin{equation}\nonumber
\textbf{(SKT)}
\begin{cases}
\vspace{3mm} 
u_t=\Delta\big(u\,(d_1+\rho_{11}\,u+\rho_{12}\,v)\big)+u\,(\sigma_1-c_{11}\,u-c_{12}\,v),\ \ &y\in\Omega,\ \ t>0,
\\
v_t=\Delta\big(v\,(d_2+\rho_{21}\,u+\rho_{22}\,v)\big)+v\,(\sigma_2-c_{21}\,u-c_{22}\,v),\ \ &y\in\Omega,\ \ t>0,
\end{cases}
\end{equation}
which was proposed by Shigesada, Kawasaki and Teramoto (\cite{Shigesada-Kawasaki-Teramoto-79-SKT-model}) in 1979 to study the spatial segregation problem for two competing species. Here $u(y,t)$ and $v(y,t)$ stand for the density of the two species $u$ and $v$, respectively, and $\Omega\subseteq\mathbb{R}^n$ is the habitat of the two species. $d_1\,\Delta u$ and $d_2\,\Delta v$ come from the random movements of individual species with diffusion rates $d_1$, $d_2>0$. Meanwhile, the terms $\Delta\big(u\,(\rho_{11}\,u+\rho_{12}\,v)\big)$ and $\Delta\big(v\,(\rho_{21}\,u+\rho_{22}\,v)\big)$ include the \textit{self-diffusion} and \textit{cross-diffusion} due to the directed movements of the individuals toward favorable habitats. The coefficients $\rho_{11}$ and $\rho_{22}$ are referred to as the self-diffusion rates, while $\rho_{12}$ and $\rho_{21}$ are the cross-diffusion rates. In addition, the coefficients $\sigma_i$, $c_{ii}$ $(i=1,2)$, and $c_{ij}$ $(i,j=1,2  \ \text{with} \;i\neq j)$ are the intrinsic growth rates, the intra-specific competition rates, and the inter-specific competition rates, which are all assumed to be positive, respectively.

To tackle the problem as to which species will survive in a competitive system is of importance in ecology. To this end, we consider traveling wave solutions, which are solutions of the form
\begin{equation}\label{eqn: traveling wave (u,v)=(U,V) intro}
(u(y,t),v(y,t))=(u(x),v(x)), \quad x=y-\theta \,t,
\end{equation}
where $x\in\mathbb{R}$ and $\theta\in\mathbb{R}$ is the propagation speed of the traveling wave. Ecologically, the sign of $\theta$ indicates which species is stronger and can survive. Inserting \eqref{eqn: traveling wave (u,v)=(U,V) intro} into $\textbf{(SKT)}$ with $\Omega=\mathbb{R}$ leads to 
\begin{equation}\nonumber
\textbf{(SKT-tw)}
\begin{cases}
\vspace{3mm} 
\big(u\,(d_1+\rho_{11}\,u+\rho_{12}\,v)\big)_{xx}+\theta\,u_x+u\,(\sigma_1-c_{11}\,u-c_{12}\,v)=0,\ \ &x\in\mathbb{R},\\
\big(v\,(d_2+\rho_{21}\,u+\rho_{22}\,v)\big)_{xx}+\theta\,v_x+v\,(\sigma_2-c_{21}\,u-c_{22}\,v)=0,\ \ &x\in\mathbb{R}.\\
\end{cases}
\end{equation}
When the self-diffusion and the cross-diffusion effects are neglected or $\rho_{11}=\rho_{12}=\rho_{21}=\rho_{22}=0$, $\textbf{(SKT)}$ with $\Omega=\mathbb{R}$ and $\textbf{(SKT-tw)}$ reduce respectively to 
\begin{equation}\nonumber
\textbf{(LV)}
\begin{cases}
\vspace{3mm} 
u_t=d_1\,\Delta u+u\,(\sigma_1-c_{11}\,u-c_{12}\,v),\ \ &y\in\mathbb{R},\ \ t>0,\\
v_t=d_2\,\Delta v+v\,(\sigma_2-c_{21}\,u-c_{22}\,v),\ \ &y\in\mathbb{R},\ \ t>0,
\end{cases}
\end{equation}
and 
\begin{equation}\nonumber
\textbf{(LV-tw)}
\begin{cases}
\vspace{3mm} 
d_1\,u_{xx}+\theta\,u_x+u\,(\sigma_1-c_{11}\,u-c_{12}\,v)=0,\ \ &x\in\mathbb{R},\\
d_2\,v_{xx}+\theta\,v_x+v\,(\sigma_2-c_{21}\,u-c_{22}\,v)=0,\ \ &x\in\mathbb{R},
\end{cases}
\end{equation}
where $\textbf{(LV)}$ is the celebrated Lotka-Volterra competition-diffusion system of two species and the NBMP for $\textbf{(LV-tw)}$ has been established by applying Theorem~\ref{thm: NBMP for m=1} for \textbf{(LV-tw)} (\cite{JDE-16}).

We illustrate our motivation for establishing Theorem~\ref{thm: NBMP for m=1} for \textbf{(LV-tw)} as follows. When the habitat of the two competing species $u$ and $v$ is resource-limited, the investigation of the total mass or the total density of the two species $v$ and $v$ is essential. This gives rise to the problem of estimating the total density $u(x)+v(x)$ in \textbf{(LV-tw)}. In addition, another issue which motivates us to study the estimate of $u(x)+v(x)$ is the measurement of the \textit{species evenness index} $\mathcal{J}$ for \textbf{(LV-tw)}. $\mathcal{J}$ is defined via \textit{Shannon's diversity index} $\mathcal{H}$ (\cite{Baczkowski98Generalized-diversity-index,Good53Estimation-population-parameters,Ramezan11Shannon-diversity-index,Simpson49Measurement-diversity}), i.e.
\begin{equation}
\mathcal{J}=\frac{\mathcal{H}}{\ln (s)},
\end{equation}
where
\begin{equation}
\mathcal{H}=-\sum_{i=1}^{s} \iota_i\cdot\ln (\iota_i),
\end{equation}
$s$ is the total number of species, and $\iota_i$ is the proportion of the $i$-th species determined by dividing the number of the $i$-th species species by the total number of all species. The species evenness index $\mathcal{J}$ for \textbf{(LV-tw)} is given by
\begin{equation}\label{eqn: species evenness index J for two species}
\mathcal{J}=-\frac{1}{(\ln 2) (u+v)}\,\left(u\,\ln \Big(\frac{u}{u+v}\Big)+v\,\ln \Big(\frac{v}{u+v}\Big)\right).
\end{equation}
We see $u(x)+v(x)$ is involved in the calculation of $\mathcal{J}$. 

Another problem we are concerned with is the parameter dependence on the estimate of $u(x)+v(x)$. When $d_1=d_2$, upper and lower bounds of $u(x)+v(x)$ are given in \cite{CPAA-16} by an approach based on the elliptic maximum principle. For the case of $d_1\neq d_2$, an affirmative answer to an even more general problem of estimating $\alpha\,u+\beta\,v$, where $\alpha,\beta>0$ are arbitrary constants, is given by means of Theorem~\ref{thm: NBMP for m=1}.

On the there hand, we are led to \eqref{eqn: degenerate autonomous system of n species} with $m=n=2$ and $l_i=1$ $(i=1,2,\cdots,n)$ when $d_1=d_2=\rho_{12}=\rho_{21}=0$ in $\textbf{(SKT-tw)}$. We therefore, address the following problem.

\textbf{Q}: \textit{Under $\mathbf{[H]}$, establish the NBMP for $\textbf{(BVP)}$, i.e. find nontrivial lower and upper bounds (depending on the coefficients in $\textbf{(BVP)}$) of $\displaystyle\sum_{i=1}^{n} \alpha_i\,u_i(x)$, where $\alpha_i>0$ $(i=1,2,\cdots,n)$ are arbitrary positive constants.}

Our main result is that \textbf{(BVP)} enjoys the following N-barrier maximum principle, which gives an affirmative answer to \textbf{Q}. Indeed, we have
   
\begin{theorem}[\textbf{NBMP for \textbf{(BVP)}}]\label{thm: NBMP for n species}
Assume that $\mathbf{[H]}$ holds. Given any set of $\alpha_i>0$ $(i=1,2,\cdots,n)$, suppose that $(u_1(x),u_2(x),\cdots,u_n(x))$ is a nonnegative $C^2$ solution to \textbf{(BVP)} with $m>1$. Then
\begin{equation}\label{eqn: upper and lower bounds of p_generalized}
\underaccent\bar{\lambda}
\leq \sum_{i=1}^{n} \alpha_i\,u_i(x) \leq
\bar{\lambda}, \quad x\in\mathbb{R},
\end{equation}
where
\begin{eqnarray}
\bar{\lambda} & = & 
\sqrt[m]{
\Bigg(\sum_{i=1}^{n}\frac{\alpha_i}{\sqrt[m-1]{d_i}}\Bigg)^{2\,(m-1)} 
\bigg(\max_{1\le i \le n} \frac{d_i}{\alpha_i^{m-1}}\bigg)
\Bigg(\max_{1\le i \le n}\alpha_i\,d_i\,\bar{u}_i^m \Bigg)
}
,
\\
\underaccent\bar{\lambda} & = & 
\sqrt[m]{
\Bigg(\sum_{i=1}^{n}\frac{1}{\sqrt[m-1]{\alpha_i\,d_i\,\underaccent\bar{u}_i^m}}\Bigg)^{1-m}
\Bigg(\sum_{i=1}^{n}\frac{\alpha_i}{\sqrt[m-1]{d_i}}\Bigg)^{1-m} 
\bigg(\min_{1\le i \le n} \frac{\alpha_i^{m-1}}{d_i}\bigg)^{2}
}
\chi,   
\end{eqnarray}
with $\chi$ given by \eqref{eqn: chi 1 or 0}.
\end{theorem}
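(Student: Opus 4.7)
Multiply the $i$-th equation of $\textbf{(BVP)}$ by $\alpha_i$ and sum; with $P(x) := \sum_{i=1}^n \alpha_i u_i(x)$ and $Q(x) := \sum_{i=1}^n \alpha_i d_i u_i^m(x)$, this yields the master identity
\begin{equation*}
Q''(x) + \theta\,P'(x) + \sum_{i=1}^n \alpha_i u_i^{l_i}\,f_i(u_1,\dots,u_n) = 0.
\end{equation*}
Because the diffusion acts on $Q$ rather than $P$, the strategy is to control $Q$ first and convert. Two algebraic bridges make this possible: H\"older's inequality with exponents $m$ and $m/(m-1)$, applied to the factorisation $\alpha_i u_i = (\alpha_i d_i u_i^m)^{1/m}(\alpha_i^{m-1}/d_i)^{1/m}$, gives $P^m \leq S^{m-1} Q$ with $S := \sum_{i=1}^n \alpha_i/d_i^{1/(m-1)}$, while the elementary inequality $\sum x_i^m \leq (\sum x_i)^m$ (valid for $x_i\geq 0$, $m\geq 1$) applied to $x_i = \alpha_i u_i$ gives the reverse $Q \leq \max_i(d_i/\alpha_i^{m-1})\,P^m$.

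\textbf{Upper bound.} I would prove $Q(x) \leq \bar{\Lambda} := S^{m-1}\max_i(d_i/\alpha_i^{m-1})\max_i \alpha_i d_i \bar{u}_i^m$; the first bridge then gives $P^m \leq S^{m-1}\bar{\Lambda} = \bar{\lambda}^m$. The calibration of $\bar{\Lambda}$ is designed so that, combined with the reverse bound and the pointwise inequality $S^{m-1}\geq \alpha_j^{m-1}/d_j$ (obtained by dropping all but the $j$-th term of $S$), $Q(x) > \bar{\Lambda}$ forces $P(x) > \max_i \alpha_i \bar{u}_i$; since $P = \sum_i (\alpha_i\bar{u}_i)(u_i/\bar{u}_i) \leq \max_i(\alpha_i\bar{u}_i)\sum_i u_i/\bar{u}_i$, this places the state in $\bar{\mathcal{R}}$, whence $\mathbf{[H]}$ gives $\sum_i \alpha_i u_i^{l_i} f_i \leq 0$ on $\{Q > \bar{\Lambda}\}$ and the master identity reads $Q'' + \theta P' \geq 0$ there. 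Arguing by contradiction, I would take a connected component $(x_1,x_2)$ of $\{Q > \bar{\Lambda}\}$ (bounded, since the same power-sum estimate applied to the equilibria gives $Q(\pm\infty) \leq \bar{\Lambda}$) and invoke the N-barrier construction of \cite{JDE-16,NBMP-n-species}: integrate the master identity against an appropriate test profile across $(x_1,x_2)$ so that the convection term $\theta P'$ drops out thanks to matched endpoint values, the reaction integral inherits a definite sign from $\mathbf{[H]}$, and the flux contributions $Q'(x_1), Q'(x_2)$ together with the strict inequality $Q > \bar{\Lambda}$ inside produce the desired contradiction.

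\textbf{Lower bound and principal obstacle.} The lower bound proceeds symmetrically: $\underaccent\bar{\lambda}$ is calibrated so that $\underaccent\bar{\lambda} \leq \min_i(\alpha_i\underaccent\bar{u}_i)$, which via $P \geq \min_i(\alpha_i\underaccent\bar{u}_i)\sum u_i/\underaccent\bar{u}_i$ implies that $P<\underaccent\bar{\lambda}$ forces $\sum u_i/\underaccent\bar{u}_i < 1$, placing the state in $\underaccent\bar{\mathcal{R}}$; $\mathbf{[H]}$ then delivers $\sum_i \alpha_i u_i^{l_i} f_i \geq 0$ and an analogous N-barrier contradiction yields $P\geq\underaccent\bar{\lambda}$. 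The two nested sums with exponent $1-m$ in the formula for $\underaccent\bar{\lambda}$ reflect the two H\"older steps used in the lower regime: one with weights $\alpha_i d_i \underaccent\bar{u}_i^m$ to translate $Q$-bounds into $\sum u_i/\underaccent\bar{u}_i$-bounds, and the $P$-to-$Q$ bridge above applied in reverse to transfer a $Q$-lower bound back into a $P$-lower bound; the characteristic $\chi$ absorbs the degenerate case $\mathbf{e}_- = \mathbf{0}$ or $\mathbf{e}_+ = \mathbf{0}$, for which no nontrivial lower bound can hold. The principal difficulty throughout is the \emph{sharp calibration} of the barrier levels so that the flux, convection, and reaction contributions combine into precisely the asymmetric constants in the statement; the nonlinear diffusion $(u_i^m)_{xx}$ enters the estimates only through the H\"older bridges, which is the origin of the $m$-th roots and $1/(m-1)$-exponents in $\bar{\lambda}$ and $\underaccent\bar{\lambda}$.
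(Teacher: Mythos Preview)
Your master identity and the two H\"older bridges linking $P$ and $Q$ are correct and are, in fact, the analytic content of the paper's tangency calculations (Lemmas~5.1 and~5.2 there are exactly the equality cases of your H\"older steps). Your barrier level $\bar{\Lambda}=S^{m-1}\max_i(d_i/\alpha_i^{m-1})\max_i\alpha_id_i\bar u_i^m$ also coincides with the paper's outer level $\lambda_2$, and your chain $Q>\bar\Lambda\Rightarrow P>\max_i\alpha_i\bar u_i\Rightarrow\sum u_i/\bar u_i>1$ is valid.

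The gap is in the integration step. Taking a connected component $(x_1,x_2)$ of $\{Q>\bar\Lambda\}$ gives matched values of $Q$ at the endpoints, not of $P$; integrating $Q''+\theta P'+F=0$ over $(x_1,x_2)$ leaves the term $\theta\bigl(P(x_2)-P(x_1)\bigr)$, whose sign you cannot control, so no contradiction results when $\theta\neq 0$. There is no ``test profile'' that makes this term vanish, because the equation couples $Q''$ with $P'$ rather than $Q'$. The actual N-barrier mechanism uses \emph{two} $Q$-levels, $\lambda_1<\lambda_2$, with a single $P$-level $\eta_1$ sandwiched geometrically between the corresponding ellipsoids: one integrates not over $\{Q>\lambda_2\}$ but from the extremum $z$ of $Q$ (where $Q(z)>\lambda_2$, hence $P(z)>\eta_1$) to the \emph{first} point $z_1$ (toward $-\infty$ if $\theta\le 0$, toward $+\infty$ if $\theta\ge 0$) where $Q=\lambda_1$ (hence $P(z_1)\le\eta_1$, since the $\lambda_1$-ellipsoid lies below the tangent plane $P=\eta_1$). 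This forces $P(z)-P(z_1)>0$, and the choice of direction makes $\theta\bigl(P(z)-P(z_1)\bigr)\le 0$; together with $q'(z)=0$, the sign of $q'(z_1)$, and $\int F\le 0$ (valid because $Q\ge\lambda_1$ on the whole integration interval keeps the trajectory in $\bar{\mathcal R}$), this yields the contradiction. Your single-level scheme misses precisely this sandwich, which is what the ``N'' in N-barrier encodes.
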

We note that except the case in which either $ \text{\bf e}_{+}=(0,\cdots,0)$ or $ \text{\bf e}_{-}=(0,\cdots,0)$, the boundary conditions at $\pm\infty$ do not play any role in determining the upper and lower bounds given by Theorem~\ref{thm: NBMP for n species}. For either $ \text{\bf e}_{+}=(0,\cdots,0)$ or $ \text{\bf e}_{-}=(0,\cdots,0)$, we clearly have only the trivial lower bound zero.

To illustrate Theorem~\ref{thm: NBMP for n species}, we present an example. Suppose that $m=n=2$, $l_i=1$ and $f_i(u_1,u_2)=u_i\,(\sigma_i-c_{i1}\,u_1-c_{i2}\,u_2)$ ($i=1,2$). Then \textbf{(BVP)} becomes
\begin{equation}\nonumber
\textbf{(NDC-tw)}
\begin{cases}
\vspace{3mm} 
d_1(u_1^2)_{xx}+\theta\,(u_1)_x+u_1\,(\sigma_1-c_{11}\,u_1-c_{12}\,u_2)=0,\ \ &x\in\mathbb{R},\\
\vspace{3mm} 
d_2(u_2^2)_{xx}+\theta\,(u_2)_x+u_2\,(\sigma_2-c_{21}\,u_1-c_{22}\,u_2)=0,\ \ &x\in\mathbb{R},\\
(u_1,u_2)(-\infty)=\textbf{e}_{-},\ \ (u_1,u_2)(+\infty)= \textbf{e}_{+},
\end{cases}
\end{equation}
where
\begin{equation}\label{eqn: e- and e+ 3 species}
\textbf{e}_{-}, \textbf{e}_{+}\in\bigg\{ (u_1,u_2) \;\Big|\; u_i\,(\sigma_i-c_{i1}\,u_1-c_{i2}\,u_2)=0\;  (i=1,2), u_1,u_2\ge 0\bigg\}.
\end{equation}
The degenerate elliptic system \textbf{(NDC-tw)} arises from the study of traveling waves in \textbf{(SKT)} without the presence of diffusion and cross-diffusion, and $\Omega$ replaced by $\mathbb{R}$, i.e.
\begin{equation}\nonumber
\textbf{(NDC)}
\begin{cases}
\vspace{3mm} 
(u_1)_t=d_1(u_1^2)_{yy}+u_1\,(\sigma_1-c_{11}\,u_1-c_{12}\,u_2),\ \ &y\in\mathbb{R},\ \ t>0,\\
(u_2)_t=d_2(u_2^2)_{yy}+u_2\,(\sigma_2-c_{21}\,u_1-c_{22}\,u_2),\ \ &y\in\mathbb{R},\ \ t>0.
\end{cases}
\end{equation}
The nonlinear diffusion-competition system \textbf{(NDC)} has been studied, for example in \cite{Guedda-Kersner-Klincsik-Logak-14-Exact-fronts-periodic}. Under suitable restrictions on the coefficients, explicit spatially periodic stationary solutions to \textbf{(NDC)} can be found. In addition, for appropriate diffusion coefficients the existence of an explicit, unbounded traveling wave to \textbf{(NDC)} is proved under either strong or weak competition. An immediate consequence of Theorem~\ref{thm: NBMP for n species} is the following NBMP for \textbf{(NDC-tw)}.

\begin{corollary}[\textbf{NBMP for NDC-tw}]\label{cor: NBMP for NDC-tw}
Assume that $(u(x),v(x))$ is a nonnegative $C^2$ solution to \textbf{(NDC-tw)}. For any set of $\alpha_i>0$ $(i=1,2)$, we have
\begin{equation}\label{eqn: upper and lower bounds of p_generalized}
\underaccent\bar{\lambda}
\leq \alpha_1\,u_1(x)+\alpha_2\,u_2(x) \leq
\bar{\lambda}, \quad x\in\mathbb{R},
\end{equation}
where
\begin{eqnarray}
\bar{\lambda} & = &
\bigg(
\frac{\alpha_1}{d_1}+\frac{\alpha_2}{d_2}
\bigg) 
\sqrt{
\max
\bigg(
\frac{d_1}{\alpha_1},\frac{d_2}{\alpha_2}
\bigg)
\max
\bigg(
\alpha_1\,d_1\,\bar{u}_1^2,\alpha_2\,d_2\,\bar{u}_2^2
\bigg)
}
,
\\
\underaccent\bar{\lambda} & = &
d_1\,d_2\,\underaccent\bar{u}_1\,\underaccent\bar{u}_2\,\min \Big(\frac{\alpha_1}{d_1},\frac{\alpha_2}{d_2}\Big)\,\sqrt{\frac{\displaystyle \alpha_1\,\alpha_2}{(\displaystyle\alpha_1\,d_1\,\underaccent\bar{u}_1^2+\alpha_2\,d_2\,\underaccent\bar{u}_2^2)\,(\alpha_1\,d_2+\alpha_2\,d_1)}}
\chi
,   
\end{eqnarray}
with $\chi$ given by \eqref{eqn: chi 1 or 0} and
\begin{alignat}{3}\label{eqn: baru and underaccentbaru for LV}
\bar{u}_1 & =\max\bigg(\frac{\sigma_1}{c_{11}},\frac{\sigma_2}{c_{21}}\bigg), & \qquad 
\bar{u}_2 & = \max\bigg(\frac{\sigma_1}{c_{12}},\frac{\sigma_2}{c_{22}}\bigg),\\ \notag
\underaccent\bar{u}_1 & = \min\bigg(\frac{\sigma_1}{c_{11}},\frac{\sigma_2}{c_{21}}\bigg), & \qquad 
\underaccent\bar{u}_2 & = \min\bigg(\frac{\sigma_1}{c_{12}},\frac{\sigma_2}{c_{22}}\bigg),
\end{alignat}

\end{corollary}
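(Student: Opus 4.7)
The plan is to derive Corollary~\ref{cor: NBMP for NDC-tw} as a direct specialization of Theorem~\ref{thm: NBMP for n species} to the case $m=n=2$, $l_i=1$, and $f_i(u_1,u_2)=\sigma_i-c_{i1}\,u_1-c_{i2}\,u_2$, so there is essentially nothing to prove beyond (a) verifying hypothesis $\mathbf{[H]}$ for the Lotka--Volterra reaction with the proposed constants and (b) carrying out the algebraic simplification of the general formulas for $\bar{\lambda}$ and $\underaccent\bar{\lambda}$ in Theorem~\ref{thm: NBMP for n species}.

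For step (a), I would take $\underaccent\bar{u}_1=\min(\sigma_1/c_{11},\sigma_2/c_{21})$ and $\underaccent\bar{u}_2=\min(\sigma_1/c_{12},\sigma_2/c_{22})$ as in \eqref{eqn: baru and underaccentbaru for LV}. The key observation is that for each $i=1,2$, we have $c_{i1}/\sigma_i\le 1/\underaccent\bar{u}_1$ and $c_{i2}/\sigma_i\le 1/\underaccent\bar{u}_2$ by the very definitions of $\underaccent\bar{u}_1,\underaccent\bar{u}_2$. Therefore, whenever $(u_1,u_2)\in\underaccent\bar{\mathcal R}$,
\begin{equation*}
\frac{c_{i1}\,u_1+c_{i2}\,u_2}{\sigma_i}\le \frac{u_1}{\underaccent\bar{u}_1}+\frac{u_2}{\underaccent\bar{u}_2}\le 1,
\end{equation*}
so $f_i(u_1,u_2)\ge 0$. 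A symmetric argument with $\bar{u}_1,\bar{u}_2$ (using $c_{ij}/\sigma_i\ge 1/\bar{u}_j$) shows $f_i(u_1,u_2)\le 0$ on $\bar{\mathcal R}$, confirming $\mathbf{[H]}$.

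For step (b), once $\mathbf{[H]}$ is in place, Theorem~\ref{thm: NBMP for n species} applies directly and yields \eqref{eqn: upper and lower bounds of p_generalized}. Setting $m=2$ in the formula for $\bar\lambda$, the exponent $2(m-1)=2$ turns the sum $\sum_i\alpha_i/\sqrt[m-1]{d_i}$ into the squared factor $(\alpha_1/d_1+\alpha_2/d_2)^2$, which pulls out of the $m$th root ($=$ square root) as $(\alpha_1/d_1+\alpha_2/d_2)$; the remaining two factors collapse to $\max(d_1/\alpha_1,d_2/\alpha_2)$ and $\max(\alpha_1 d_1\bar u_1^2,\alpha_2 d_2\bar u_2^2)$, matching the claimed form of $\bar\lambda$.

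For $\underaccent\bar{\lambda}$, with $m=2$ the exponents $1-m=-1$ invert the two sums, so
\begin{equation*}
\underaccent\bar\lambda
=\sqrt{\frac{\alpha_1 d_1\underaccent\bar u_1^2\cdot\alpha_2 d_2\underaccent\bar u_2^2}{\alpha_1 d_1\underaccent\bar u_1^2+\alpha_2 d_2\underaccent\bar u_2^2}\cdot\frac{d_1 d_2}{\alpha_1 d_2+\alpha_2 d_1}\cdot\Bigl(\min\bigl(\tfrac{\alpha_1}{d_1},\tfrac{\alpha_2}{d_2}\bigr)\Bigr)^2}\;\chi,
\end{equation*}
and pulling $\min(\alpha_1/d_1,\alpha_2/d_2)$ and $d_1 d_2\,\underaccent\bar u_1\underaccent\bar u_2$ out of the radical produces exactly the stated expression. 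I do not anticipate a genuine obstacle here: the whole argument is a specialization plus bookkeeping, and the only nontrivial piece is recognizing $\underaccent\bar{u}_i,\bar{u}_i$ in \eqref{eqn: baru and underaccentbaru for LV} as the smallest/largest zero-nullcline intercepts that make $\mathbf{[H]}$ hold.
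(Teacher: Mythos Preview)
Your proposal is correct and follows essentially the same route as the paper's own proof: verify hypothesis $\mathbf{[H]}$ for the Lotka--Volterra nonlinearity using the intercept choices in \eqref{eqn: baru and underaccentbaru for LV}, then invoke Theorem~\ref{thm: NBMP for n species}. Your algebraic check that $c_{ij}/\sigma_i\le 1/\underaccent\bar{u}_j$ (and $\ge 1/\bar u_j$) is exactly the content of the paper's remark that $\underaccent\bar{u}_j,\bar u_j$ are the smallest/largest $u_j$-intercepts of the nullclines; the paper then simply states that the result follows, whereas you additionally spell out the $m=2$ simplification of the $\bar\lambda,\underaccent\bar\lambda$ formulas, which is helpful bookkeeping the paper leaves implicit.
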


\begin{proof}
We apply Theorem~\ref{thm: NBMP for n species} to prove Corollary~\ref{cor: NBMP for NDC-tw}. Due to \eqref{eqn: baru and underaccentbaru for LV}, it can be verified that $\mathbf{[H]}$ is satisfied. Indeed, we have
\begin{eqnarray*}
\underaccent\bar{\mathcal{R}}&=&\Bigg\{ (u_1,u_2)\;\Bigg|\; \sum_{i=1}^{2}\frac{\displaystyle u_i}{\displaystyle\min_{j=1,2}\frac{\sigma_j}{c_{ji}}}\le 1,\; u_1,u_2\ge 0 \Bigg\};\\
\bar{\mathcal{R}}&=&\Bigg\{ (u_1,u_2)\;\Bigg|\; \sum_{i=1}^{2}\frac{\displaystyle u_i}{\displaystyle\max_{j=1,2}\frac{\sigma_j}{c_{ji}}}\ge 1,\; u_1,u_2\ge 0 \Bigg\}.
\end{eqnarray*}
Since $\displaystyle \min_{j=1,2}\frac{\sigma_j}{c_{ji}}$ \Big($\displaystyle \max_{j=1,2}\frac{\sigma_j}{c_{ji}}$, respectively\Big) is the smallest (largest, respectively) $u_i$-intercept of the two planes  $\sigma_i-c_{i1}\,u_1-c_{i2}\,u_2=0$ $(i=1,2)$, we see that 
\begin{eqnarray*}
\sigma_i-c_{i1}\,u_1-c_{i2}\,u_2\ge 0 &\text{ whenever } (u_1,u_2)\in \underaccent\bar{\mathcal{R}};\\\\
\sigma_i-c_{i1}\,u_1-c_{i2}\,u_2\le 0 &\text{ whenever } (u_1,u_2)\in \bar{\mathcal{R}},
\end{eqnarray*}
for each $i=1,2$. The desired result follows from Theorem~\ref{thm: NBMP for n species}.

\end{proof}

As an interesting application of the linear diffusion NBMP (Theorem~\ref{thm: NBMP for m=1}), we investigate the situation where one exotic competing species (say, $w$) invades the ecological system of two native species (say, $u$ and $v$) that are competing in the absence of $w$. A problem related to \textit{competitive exclusion} (\cite{Armstrong80Competitive-exclusion,Hsu08Competitive-exclusion,Hsu-Smith-Waltman96Competitive-exclusion-coexistence-Competitive,Jang13Competitive-exclusion-Leslie-Gower-competition-Allee,McGehee77Competitive-exclusion,Smith94Competition}) or \textit{competitor-mediated coexistence} (\cite{CantrellWard97Competition-mediatedCoexistence,Kastendiek82Competitor-mediatedCoexistence3Species,Mimura15DynamicCoexistence3species}) then arises. The Lotka-Volterra system of three competing species is usually used to model this situation (\cite{Adamson12SpeciesCyclicCompetition,EiMimuraIkota99SegregatingCompetition-diffusion,Grossberg78Decision-Patterns-Oscillations-LVcompetitive,Gyllenberg09LV3species-Heteroclinic,Hallam79Persistence-Extinction3speciesLV,KoRyuAhn14Coexistence3Competing-species,Maier13Integration3-dimensionalLV,Mimura15DynamicCoexistence3species,PetrovskiiShigesada01Spatio-temporalThree-competitive-species,Zeeman98Three-dimensionalCompetitiveLV,Zeeman93Hopf-bifurcationsCompetitive3speciesLV}). Under this situation, the traveling wave solution $(u(x),v(x),w(x))$ satisfies the following system:

\begin{equation}\label{eqn: 3 species TWS nonexistence linear diffusion}
\begin{cases}
\vspace{3mm} 
d_1\,u_{xx}+\theta\,u_x+u\,(\sigma_1-c_{11}\,u-c_{12}\,v-c_{13}\,w)=0,\ \ &x\in\mathbb{R},\\
\vspace{3mm} 
d_2\,v_{xx}+\theta\,v_x+v\,(\sigma_2-c_{21}\,u-c_{22}\,v-c_{23}\,w)=0,\ \ &x\in\mathbb{R},\\
d_3\,w_{xx}+\theta\,w_x+w\,(\sigma_3-c_{31}\,u-c_{32}\,v-c_{33}\,w)=0,\ \ &x\in\mathbb{R}.
\end{cases}
\end{equation}
Clearly, \eqref{eqn: degenerate autonomous system of n species} includes \eqref{eqn: 3 species TWS nonexistence linear diffusion} as a special case. For \eqref{eqn: 3 species TWS nonexistence linear diffusion}, existence of solutions with profiles of one-hump waves coupled with the boundary conditions
\begin{equation}\label{eqn: 3 species TWS nonexistence linear diffusion BC}
(u,v,w)(-\infty)=\Big(\frac{\sigma_1}{c_{11}},0,0\Big), \ \ (u,v,w)(\infty)=\Big(0,\frac{\sigma_2}{c_{22}},0\Big).
\end{equation}
is investigated under certain assumptions on the parameters by finding exact solutions (\cite{CHMU,CPAA-16}) and using the numerical tracking method AUTO (\cite{CHMU}). A one-hump wave is referred to as a traveling wave consisting of a forward front $v$, a backward front $u$, and a pulse $w$ in the middle. On the other hand, nonexistence of solutions for the problem \eqref{eqn: 3 species TWS nonexistence linear diffusion}, \eqref{eqn: 3 species TWS nonexistence linear diffusion BC} is established by means of the NBMP (Theorem~\ref{thm: NBMP for m=1}) as well as the elliptic maximum principle under certain conditions (\cite{CPAA-16,JDE-16}).

Recently, new dynamical patterns exhibited by the solutions of the Lotka-Volterra system of three competing species have been found in \cite{Mimura15DynamicCoexistence3species}, where traveling wave solutions of the three species (i.e. solutions of \eqref{eqn: 3 species TWS nonexistence linear diffusion} are used as building blocks \eqref{eqn: 3 species TWS nonexistence linear diffusion} to generate dynamical patterns in which three species coexist. This numerical evidence demonstrates (indicates) from the viewpoint of dynamical coexistence of the three species the great importance of the one-hump waves in the problem \eqref{eqn: 3 species TWS nonexistence linear diffusion}, \eqref{eqn: 3 species TWS nonexistence linear diffusion BC}.

The linear diffusion terms in \eqref{eqn: 3 species TWS nonexistence linear diffusion} are based on Fick's law in which the population flux is proportional to the gradient of the population density. In some situations, however, evidences from field studies have shown the inadequacy of this model. Due to population pressure, the phenomenon that species tend to avoid crowded can be characterized by the population flux which depends on both the population density and its gradient (\cite{Murray93MathBio,Sherratt-Marchant96NonsharpTWdegenerate-nonlinear-diffusion,Witelski95MergingTW-porous-Fisher}). Gurney and Nisbet considered the nonlinear diffusion effect described above, and proposed the following the model (\cite{Gurney-Nisbet75inhomogeneous-populations,Gurney-Nisbet76non-linear-population}) 
\begin{equation}\label{eqn: porous medium equation}
u_t=(u\,u_x)_x+u\,(u-1),
\end{equation}
where the population flux is proportional to $u$ and $u_x$. Based on porous medium version of the Fisher equation \eqref{eqn: porous medium equation} (\cite{Pablo-Sanchez98Porous-FisherTW,Sanchez-Maini95Degenerate-RD-TW,Sanchez-Maini94SharpTW-degenerate-Fisher}), \eqref{eqn: 3 species TWS nonexistence linear diffusion} becomes
\begin{equation}\label{eqn: 3 species TWS nonexistence}
\begin{cases}
\vspace{3mm} 
d_1\,(u^2)_{xx}+\theta\,u_x+u\,(\sigma_1-c_{11}\,u-c_{12}\,v-c_{13}\,w)=0,\ \ &x\in\mathbb{R},\\
\vspace{3mm} 
d_2\,(v^2)_{xx}+\theta\,v_x+v\,(\sigma_2-c_{21}\,u-c_{22}\,v-c_{23}\,w)=0,\ \ &x\in\mathbb{R},\\
d_3\,(w^2)_{xx}+\theta\,w_x+w\,(\sigma_3-c_{31}\,u-c_{32}\,v-c_{33}\,w)=0,\ \ &x\in\mathbb{R},\\
\end{cases}
\end{equation}
For the existence of solutions of the problem \eqref{eqn: 3 species TWS nonexistence}, \eqref{eqn: 3 species TWS nonexistence linear diffusion BC}, it seems as far as we know, not available in the literature. As a starting point to study this problem, we instead find the conditions on the parameters under which the solutions do not exist. With the aid of the NBMP for the problem \eqref{eqn: 3 species TWS nonexistence}, \eqref{eqn: 3 species TWS nonexistence linear diffusion BC}, this can be achieved as the following nonexistence result shows.    

\begin{theorem}[\textbf{Nonexistence of three species waves}]\label{thm: Nonexistence 3 species}
Under either \textit{(i)} or \textit{(ii)}, \eqref{eqn: 3 species TWS nonexistence} admits no positive solution $(u(x),v(x),w(x))$ with $u(x)$,$v(x)$,$w(x)\not\equiv$ constant.
\begin{itemize}
  \item [(i)] Let $\phi_1=\sigma_1-c_{13}\,\sigma_3\,c_{33}^{-1}$ and $\phi_2=\sigma_2-c_{23}\,\sigma_3\,c_{33}^{-1}$. Assume that the following hypotheses hold:
\begin{itemize}
\item [$\mathbf{[H0]}$] $(u,v)(\pm\infty)\neq (0,0)$;
\item [$\mathbf{[H1]}$] $\displaystyle\max_{x\in\mathbb{R}} w(x)=w(x_0)$ for some $x_0\in\mathbb{R}$; 
\item [$\mathbf{[H2]}$] $\phi_1,\phi_2>0$; 
\item [$\mathbf{[H3]}$] $\lambda_{\ast}:=d_1\,d_2\,\underaccent\bar{u}_{\ast}\,\underaccent\bar{v}_{\ast}\,\displaystyle\min \bigg(\frac{c_{31}}{d_1},\frac{c_{32}}{d_2}\bigg)\,\displaystyle\sqrt{\frac{\displaystyle c_{31}\,c_{32}}{(\displaystyle c_{31}\,d_1\,\underaccent\bar{u}_{\ast}^2+c_{32}\,d_2\,\underaccent\bar{v}_{\ast}^2)\,(c_{31}\,d_2+c_{32}\,d_1)}}\ge\sigma_3$,
where
\begin{alignat}{3}
\underaccent\bar{u}_{\ast} & = \min\bigg(\frac{\phi_1}{c_{11}},\frac{\phi_2}{c_{21}}\bigg), & \qquad 
\underaccent\bar{v}_{\ast} & = \min\bigg(\frac{\phi_1}{c_{12}},\frac{\phi_2}{c_{22}}\bigg).
\end{alignat}

\end{itemize}
  \item [(ii)]  Assume that the following hypotheses hold:
\begin{itemize}
\item [$\mathbf{[H4]}$] $\displaystyle\min_{x\in\mathbb{R}} w(x)=w(x_0)$ for some $x_0\in\mathbb{R}$; 
\item [$\mathbf{[H5]}$] $\lambda^{\ast}:=\displaystyle\bigg(
\frac{c_{31}}{d_1}+\frac{c_{32}}{d_2}
\bigg) 
\sqrt{
\max
\bigg(
\frac{d_1}{c_{31}},\frac{d_2}{c_{32}}
\bigg)
\max
\Big(
c_{31}\,d_1\,{\bar{u}^{\ast}}^2,c_{32}\,d_2\,{\bar{v}^{\ast}}^2
\Big)
}<\sigma_3$,
where
\begin{alignat}{3}
{\bar{u}^{\ast}} & = \max\bigg(\frac{\sigma_1}{c_{11}},\frac{\sigma_2}{c_{21}}\bigg), & \qquad 
{\bar{v}^{\ast}} & = \max\bigg(\frac{\sigma_1}{c_{12}},\frac{\sigma_2}{c_{22}}\bigg).
\end{alignat}
\item [$\mathbf{[H6]}$] $w(\pm\infty):=w_{\pm\infty}$, where either $w_{-\infty}<\displaystyle\frac{1}{c_{33}}(\sigma_3-\lambda^{\ast})$ or $w_{+\infty}<\displaystyle\frac{1}{c_{33}}(\sigma_3-\lambda^{\ast})$.
\end{itemize}
\end{itemize}
\end{theorem}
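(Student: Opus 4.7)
I would exploit the interplay between the scalar equation for $w$ at an extremum and Corollary~\ref{cor: NBMP for NDC-tw} applied to the pair $(u,v)$ with weights $\alpha_1=c_{31}$, $\alpha_2=c_{32}$. In case (i) a pointwise \emph{upper} bound on $c_{31}u+c_{32}v$ at the maximum of $w$ will be shown to contradict a uniform \emph{lower} bound furnished by the NBMP; in case (ii) a pointwise lower bound at the minimum of $w$ together with the NBMP \emph{upper} bound will force a uniform positive lower bound on $w$ that violates $\mathbf{[H6]}$. The minor twist is that $(u,v)$ does not solve an autonomous $\textbf{(NDC-tw)}$, because its reactions involve $w(x)$; however, once $w$ is controlled in $L^\infty$ the signs imposed by $\mathbf{[H]}$ will hold uniformly in $x$ and the N-barrier construction underlying Corollary~\ref{cor: NBMP for NDC-tw} will carry over unchanged.

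\textbf{Case (i).} First I would use the identity $(w^{2})_{xx}=2w\,w_{xx}+2w_x^{2}$ together with $w_x(x_0)=0$ and $w_{xx}(x_0)\le 0$ (supplied by $\mathbf{[H1]}$) to deduce $(w^{2})_{xx}(x_0)\le 0$. Dividing the third equation of \eqref{eqn: 3 species TWS nonexistence} at $x=x_0$ by $w(x_0)>0$ then gives
\begin{equation*}
c_{31}u(x_0)+c_{32}v(x_0)+c_{33}w(x_0)\le \sigma_3,
\qquad
w(x)\le w(x_0)\le \frac{\sigma_3}{c_{33}}\ \ \text{for all}\ x\in\mathbb{R}.
\end{equation*}
Substituting this uniform bound into the first two equations, one checks that the reactions $\sigma_i-c_{i1}u-c_{i2}v-c_{i3}w(x)$ dominate $\phi_i-c_{i1}u-c_{i2}v$ for $i=1,2$; by $\mathbf{[H2]}$ the $\phi_i$ are positive, so the \emph{lower} half of $\mathbf{[H]}$ holds uniformly in $x$ on $\underaccent\bar{\mathcal{R}}_\ast$ determined by the $\underaccent\bar{u}_\ast,\underaccent\bar{v}_\ast$ in $\mathbf{[H3]}$. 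Combined with $\mathbf{[H0]}$ (which secures $\chi=1$), the N-barrier argument of Corollary~\ref{cor: NBMP for NDC-tw} with $\alpha_1=c_{31}$, $\alpha_2=c_{32}$ yields $c_{31}u(x)+c_{32}v(x)\ge \lambda_\ast$ for all $x$. Evaluating at $x_0$ and subtracting gives $c_{33}w(x_0)\le \sigma_3-\lambda_\ast\le 0$ by $\mathbf{[H3]}$, contradicting $w(x_0)>0$.

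\textbf{Case (ii).} At the interior minimum $x_0$ of $w$ supplied by $\mathbf{[H4]}$, the analogous one-point computation gives $(w^{2})_{xx}(x_0)\ge 0$ and hence
\begin{equation*}
c_{31}u(x_0)+c_{32}v(x_0)+c_{33}w(x_0)\ge \sigma_3.
\end{equation*}
For the \emph{upper} bound on $c_{31}u+c_{32}v$ I would note that $-c_{i3}w\le 0$ yields $\sigma_i-c_{i1}u-c_{i2}v-c_{i3}w\le \sigma_i-c_{i1}u-c_{i2}v$, so the upper half of $\mathbf{[H]}$ (with $\bar u^\ast,\bar v^\ast$ from $\mathbf{[H5]}$) holds uniformly in $x$ on $\bar{\mathcal{R}}^{\ast}$. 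The NBMP upper bound of Corollary~\ref{cor: NBMP for NDC-tw} therefore gives $c_{31}u(x)+c_{32}v(x)\le \lambda^{\ast}$ for every $x$. Combined with the displayed inequality and $\mathbf{[H5]}$, this implies $w(x)\ge w(x_0)\ge (\sigma_3-\lambda^{\ast})/c_{33}>0$ for all $x$, which directly contradicts $\mathbf{[H6]}$.

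\textbf{Main obstacle.} The extremum bookkeeping is routine; the only delicate point is justifying that Corollary~\ref{cor: NBMP for NDC-tw}, stated for the autonomous system $\textbf{(NDC-tw)}$, still applies to $(u,v)$ in our setting, where the reaction depends on $x$ through $w$. I would handle this by inspecting the N-barrier construction: it enters the reaction only through the signs of the nonlinearities on $\underaccent\bar{\mathcal{R}}$ and $\bar{\mathcal{R}}$, so any $x$-dependent perturbation preserving those signs (here, bounded $w$) leaves every step intact. The upper and lower halves of $\mathbf{[H]}$ must be checked separately in the two cases, which is exactly what the parameter constellations $\mathbf{[H2]}$--$\mathbf{[H3]}$ and $\mathbf{[H5]}$ are engineered to provide.
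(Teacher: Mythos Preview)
Your proposal is correct and follows essentially the same route as the paper: in both cases you analyze the third equation at the extremum of $w$, convert the first two equations into differential inequalities with $w$ eliminated (via the bound $w\le\sigma_3/c_{33}$ in case~(i) and $w>0$ in case~(ii)), and then invoke the NBMP lower/upper bound of Corollary~\ref{cor: NBMP for NDC-tw} with weights $\alpha_1=c_{31}$, $\alpha_2=c_{32}$ to reach a contradiction with $\mathbf{[H3]}$ or $\mathbf{[H6]}$. Your ``main obstacle'' paragraph is in fact more careful than the paper itself, which simply cites Corollary~\ref{cor: NBMP for NDC-tw} on the differential-inequality system without comment; the underlying justification is exactly what you say, namely that Propositions~\ref{prop: lower bed} and~\ref{prop: upper bed} are stated for \textbf{(BVP-u)} and \textbf{(BVP-l)} and thus apply directly once the appropriate sign of the reaction is secured uniformly in $x$.
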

We note that when the boundary conditions are imposed at $x=\pm\infty$ like \eqref{eqn: 3 species TWS nonexistence linear diffusion BC}, hypotheses $\mathbf{[H0]}$ and $\mathbf{[H1]}$ are simultaneously satisfied. Roughly speaking, $(i)$ of Theorem~\ref{thm: Nonexistence 3 species} says from the viewpoint of ecology that when the intrinsic growth rate $\sigma_3$ of $w$ is sufficiently small ( i.e. $\mathbf{[H3]}$), the three species $u$, $v$ and $w$ cannot coexist in the ecological system modeled by \eqref{eqn: 3 species TWS nonexistence}, \eqref{eqn: 3 species TWS nonexistence linear diffusion BC}. In other words, competitor-mediated coexistence cannot occur in such a circumstance. On the other hand, $\mathbf{[H6]}$ is satisfied when the boundary conditions are 
\begin{equation}\label{eqn: 3 species TWS nonexistence linear diffusion BC (ii)}
(u,v,w)(-\infty)=\Big(\frac{\sigma_1}{c_{11}},0,0\Big), \ \ (u,v,w)(\infty)=\Big(0,\tilde{v},\tilde{w}\Big),
\end{equation}
where $v=\tilde{v}$, $w=\tilde{w}$ solves
\begin{equation}
\sigma_2-c_{22}\,v-c_{23}\,w=0, \ \ \sigma_3-c_{32}\,v-c_{33}\,w=0
\end{equation}
or 
\begin{equation}
\tilde{v}=\frac{c_{23}\,\sigma _3-c_{33}\,\sigma _2}{c_{23}\,c_{32}-c_{22}\,c_{33}}, \ \ 
\tilde{w}=\frac{c_{32}\,\sigma _2-c_{22}\,\sigma _3}{c_{23}\,c_{32}-c_{22}\,c_{33}},
\end{equation}
whenever the coexistence state $(\tilde{v},\tilde{w})$ exists. $\mathbf{[H4]}$ is an extra hypothesis on the  profile of the wave. As a consequence, $(ii)$ of Theorem~\ref{thm: Nonexistence 3 species} asserts that under certain conditions on the boundary conditions (i.e. $\mathbf{[H6]}$) and on the profile of the wave (i.e. $\mathbf{[H4]}$), coexistence among the three species $u$, $v$ and $w$ cannot occur when the intrinsic growth rate $\sigma_3$ of $w$ is sufficiently large (i.e. $\mathbf{[H5]}$).

The remainder of this paper is organized as follows. Section~\ref{sec:proof of NBMP} is devoted to the proof of Theorem~\ref{thm: NBMP for n species}. As an application of Theorem~\ref{thm: NBMP for n species}, we show in Section~\ref{sec: nonexistence} the nonexistence result of three species in Theorem~\ref{thm: Nonexistence 3 species}. In Section~\ref{sec: Concluding Remarks}, we propose some open problems concerning the NBMP. Finally, some exact traveling wave solutions and the solutions of a system of algebraic equations needed in the proof of Theorem~\ref{thm: NBMP for n species} are given in the Appendix (Section~\ref{sec: appendix}).

\section{Proof of Theorem~\ref{thm: NBMP for n species}}\label{sec:proof of NBMP}

\begin{prop} [\textbf{Lower bound in NBMP}]\label{prop: lower bed}
Suppose that $u_i(x)\in C^2(\mathbb{R})$ with $u_i(x)\ge0$ $(i=1,2,\cdots,n)$ and satisfy the following differential inequalities and asymptotic behavior:
\begin{equation*}
\textbf{(BVP-u)}
\begin{cases}
\vspace{3mm}
d_i\,(u_i^m)_{xx}+\theta\,(u_i)_{x}+u_i^{l_i}\,f_i(u_1,u_2,\cdots,u_n)\le0, \quad x\in\mathbb{R}, \quad i=1,2,\cdots,n, \\
(u_1,u_2,\cdots,u_n)(-\infty)=\textbf{e}_{-},\quad (u_1,u_2,\cdots,u_n)(\infty)=\textbf{e}_{+},
\end{cases}
\end{equation*}
where $\textbf{e}_{-}$ and $\textbf{e}_{+}$ are given by \eqref{eqn: e- and e+}. If the hypothesis 
\begin{itemize}
\item [$\mathbf{[\underline{H}]}$]
For $i=1,2,\cdots,n$, there exist $\underaccent\bar{u}_i>0$  such that
\begin{eqnarray*}
f_i(u_1,u_2,\cdots,u_n)\geq 0 &\text{ whenever} (u_1,u_2,\cdots,u_n)\in \underaccent\bar{\mathcal{R}},
\end{eqnarray*}
where $\underaccent\bar{\mathcal{R}}$ is as defined in $\mathbf{[H]}$
\end{itemize}
holds, then we have for any $\alpha_i>0$ $(i=1,2,...,n)$
\begin{equation}\label{eqn: lower bound of p}
\sum_{i=1}^{n} \alpha_i\,u_i(x)\geq 
\sqrt[m]{
\Bigg(\sum_{i=1}^{n}\frac{1}{\sqrt[m-1]{\alpha_i\,d_i\,\underaccent\bar{u}_i^m}}\Bigg)^{1-m}
\Bigg(\sum_{i=1}^{n}\frac{\alpha_i}{\sqrt[m-1]{d_i}}\Bigg)^{1-m} 
\bigg(\min_{1\le i \le n} \frac{\alpha_i^{m-1}}{d_i}\bigg)^{2}
}
\chi
\end{equation}
where $\chi$ is defined as in \eqref{eqn: chi 1 or 0}.
\end{prop}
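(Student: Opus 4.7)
The strategy is a proof by contradiction in the spirit of the N-barrier method of \cite{JDE-16,NBMP-n-species}. Introduce the weighted sums $p(x)=\sum_{i=1}^n\alpha_i u_i(x)$ and $q(x)=\sum_{i=1}^n\alpha_i d_i u_i^m(x)$. Multiplying the $i$-th inequality in \textbf{(BVP-u)} by $\alpha_i$ and summing yields
$$q''(x)+\theta\,p'(x)+\sum_{i=1}^n\alpha_i\,u_i^{l_i}\,f_i(u_1,\ldots,u_n)\le 0.$$
By hypothesis $\mathbf{[\underline{H}]}$ the last summand is nonnegative whenever $(u_1(x),\ldots,u_n(x))\in\underaccent\bar{\mathcal{R}}$, yielding the \emph{master inequality}
$$q''(x)+\theta\,p'(x)\le 0\quad\text{on}\quad\{x:\,(u_1(x),\ldots,u_n(x))\in\underaccent\bar{\mathcal{R}}\}.$$
This is the only analytic input drawn from the equations; everything thereafter is algebra and geometry.

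Assume for contradiction that $p(x_0)<\underaccent\bar{\lambda}$ at some $x_0\in\mathbb{R}$. Since $\chi=1$, neither boundary equilibrium is the origin. A direct algebraic check (immediate at $n=1$, where both sides collapse to $\alpha_1\underaccent\bar{u}_1$) shows $\underaccent\bar{\lambda}\le\min_i\alpha_i\underaccent\bar{u}_i$, so I select a threshold $L\in(p(x_0),\underaccent\bar{\lambda}]$ for which every $(u_1,\ldots,u_n)\in\mathbb{R}^n_{\ge 0}$ with $\sum_i\alpha_i u_i\le L$ lies in $\underaccent\bar{\mathcal{R}}$. Let $(x_1,x_2)\ni x_0$ be the connected component of $\{x\in\mathbb{R}:p(x)<L\}$. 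The cases of a bounded, half-infinite, or doubly infinite interval must be handled separately, but in every case integration of the master inequality over $(x_1,x_2)$ gives
$$q'(x_2)-q'(x_1)\le -\theta\,[p(x_2)-p(x_1)],$$
where $p(x_j)=L$ at finite endpoints and $p(\pm\infty)$ otherwise, together with the sign conditions $p'(x_1)\le 0\le p'(x_2)$ at finite endpoints.

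To convert this endpoint relation into the quantitative bound $L\ge\underaccent\bar{\lambda}$, I apply a weighted Cauchy--Schwarz (or H\"older) inequality to the identities $p'=\sum_i\alpha_i(u_i)_x$ and $q'=m\sum_i\alpha_i d_i u_i^{m-1}(u_i)_x$. The splitting weights are dictated by a Lagrange optimization over the simplex $\underaccent\bar{\mathcal{R}}$, calibrated so that the expressions $\sum\alpha_i/\sqrt[m-1]{d_i}$ and $\sum 1/\sqrt[m-1]{\alpha_i d_i\underaccent\bar{u}_i^m}$ appearing in $\underaccent\bar{\lambda}$ arise as the optimal constants. The extremizer is an explicit porous-medium traveling-wave profile, encoded in the Appendix by a companion system of algebraic equations. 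Combining these sharp Cauchy--Schwarz bounds with $q'(x_2)\le q'(x_1)$ and $p'(x_1)\le 0\le p'(x_2)$ forces $L\ge\underaccent\bar{\lambda}$, contradicting the choice $p(x_0)<L\le\underaccent\bar{\lambda}$.

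The principal obstacle is the calibration of this Cauchy--Schwarz decomposition: one must identify the unique splitting whose extremizer over $\underaccent\bar{\mathcal{R}}$ exactly saturates the closed-form $\underaccent\bar{\lambda}$, and verify that the associated algebraic system in the Appendix delivers precisely those sharp weights. This matching is where the genuinely new work of the $m>1$ case lies relative to the linear-diffusion argument behind Theorem~\ref{thm: NBMP for m=1}; once the sharp splitting is in hand, the integration of the master inequality, the endpoint sign analysis, and the assembly of the contradiction are essentially routine.
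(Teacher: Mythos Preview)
The central gap is in your final step, where you assert that Cauchy--Schwarz together with $q'(x_2)\le q'(x_1)$ and $p'(x_1)\le 0\le p'(x_2)$ ``forces $L\ge\underaccent\bar{\lambda}$.'' That forcing does not occur. When both endpoints $x_1,x_2$ are finite you have $p(x_1)=p(x_2)=L$, so the $\theta$-term integrates to zero and your integrated inequality reduces to $q'(x_2)\le q'(x_1)$, a relation that does not involve $L$ at all. The derivatives $p'=\sum_i\alpha_i u_i'$ and $q'=m\sum_i\alpha_i d_i u_i^{m-1}u_i'$ are both linear in the $u_i'$, and any H\"older or Cauchy--Schwarz comparison between them depends on the point $(u_1,\dots,u_n)$ and on the direction of the derivative vector; it gives no sign or magnitude control that can be converted into a constraint on the level $L$. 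No Lagrange optimization over $\underaccent\bar{\mathcal{R}}$ repairs this, because the integrated relation you have derived simply carries no quantitative information about $L$.

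What the paper actually does is different in kind: the ``N-barrier'' is a \emph{nested chain}
\[
\mathcal{P}_{\eta_2}\subset\mathcal{Q}_{\lambda_2}\subset\mathcal{P}_{\eta_1}\subset\mathcal{Q}_{\lambda_1}\subset\underaccent\bar{\mathcal{R}}
\]
of sublevel sets of \emph{both} $p$ and $q$, with the parameters $\lambda_1,\eta_1,\lambda_2,\eta_2$ fixed by the tangency computations of Lemmas~\ref{lem: lambda1} and~\ref{lem: lambda2}. One assumes $q(z)<\lambda_2$ at the global minimum $z$ of $q$, and then integrates the master inequality from $z$ to the first point $z_1$ (on the side dictated by $\operatorname{sgn}\theta$) where the solution meets the level set $q=\lambda_1$. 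The contradiction now falls out with no Cauchy--Schwarz: $q'(z)=0$ and $q'(z_1)$ has a definite sign because $z_1$ is a first crossing of $q=\lambda_1$; meanwhile the nesting $\mathcal{Q}_{\lambda_2}\subset\mathcal{P}_{\eta_1}\subset\mathcal{Q}_{\lambda_1}$ forces $p(z)<\eta_1<p(z_1)$, so the term $\theta\,(p(z)-p(z_1))$ has the favorable sign as well, and the integrand $F\ge 0$ throughout since the trajectory stays in $\underaccent\bar{\mathcal{R}}$. The entire strength of the method lies in \emph{interleaving} level sets of $q$ with a level set of $p$; that is what manufactures the strictly positive gap $p(z_1)-p(z)$ to play against $\theta$, and it is precisely the ingredient your single-threshold argument in $p$ cannot supply.
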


\begin{proof}
For the case where $\textbf{e}_{+}=(0,\cdots,0)$ or $\textbf{e}_{-}=(0,\cdots,0)$, a trivial lower bound of $\displaystyle\sum_{i=1}^{n} \alpha_i\,u_i(x)$ is $0$. It suffices to show \eqref{eqn: lower bound of p} for the case $\text{\bf e}_{+}\neq(0,...,0)$ and $\text{\bf e}_{-}\neq(0,...,0)$. To this end, we let
\begin{eqnarray}\label{eqn: definition of p and q}
p(x) & = & \sum_{i=1}^{n} \alpha_i\,u_i(x);\\
q(x) & = & \sum_{i=1}^{n} \alpha_i\,d_i\,u_i^m(x).
\end{eqnarray}
Adding the $n$ equations in \textbf{(BVP-u)}, we obtain a single equation involving $p(x)$ and $q(x)$
\begin{equation}\label{eqn: ODE for p and q}
\frac{d^2 q(x)}{dx^2}+\theta\,\frac{d p(x)}{dx}+F(u_1(x),u_2(x),\cdots,u_n(x))\le0,\quad x\in\mathbb{R},
\end{equation}
where $F(u_1,u_2,\cdots,u_n):=\displaystyle\sum_{i=1}^{n} \alpha_i\,u_i^{l_i}\,f_i(u_1,u_2,\cdots,u_n)$. First of all, we show how to construct \textit{the N-barrier}.


Determining an appropriate N-barrier is crucial in establishing \eqref{eqn: lower bound of p}. The construction of the N-barrier consists of determining the positive parameters $\lambda_1$, $\lambda_2$, $\eta_1$ and $\eta_2$ such that the two hyper-ellipsoids $\displaystyle\sum_{i=1}^{n} \alpha_i\,d_i\,u_i^m=\lambda_1$ and $\displaystyle\sum_{i=1}^{n} \alpha_i\,d_i\,u_i^m=\lambda_2$, and the two hyperplanes $\displaystyle\sum_{i=1}^{n} \alpha_i\,u_i=\eta_1$ and $\displaystyle\sum_{i=1}^{n} \alpha_i\,u_i=\eta_2$ satisfy the relationship
\begin{equation}\label{eqn: Q1<P<Q2<R} 
\mathcal{P}_{\eta_2}\subset \mathcal{Q}_{\lambda_2}\subset \mathcal{P}_{\eta_1}\subset \mathcal{Q}_{\lambda_1}\subset \underaccent\bar{\mathcal{R}},
\end{equation}
where 
\begin{eqnarray}
\mathcal{P}_{\eta} & = & \Big\{ (u_1,u_2,\cdots,u_n) \;\Big|\; \sum_{i=1}^{n} \alpha_i\,u_i\le\eta,\; u_1,u_2,\cdots,u_n\ge 0\Big\}; \\
\mathcal{Q}_{\lambda} & = & \Big\{ (u_1,u_2,\cdots,u_n) \;\Big|\; \sum_{i=1}^{n} \alpha_i\,d_i\,u_i^m\le\lambda,\; u_1,u_2,\cdots,u_n\ge 0\Big\}.
\end{eqnarray}
The hyper-ellipsoids $\displaystyle\sum_{i=1}^{n} \alpha_i\,d_i\,u_i^m=\lambda_1$ and $\displaystyle\sum_{i=1}^{n} \alpha_i\,d_i\,u_i^m=\lambda_2$, and the hyperplane $\displaystyle\sum_{i=1}^{n} \alpha_i\,u_i=\eta_1$ form the N-barrier; it turns out that the hyperplane $\displaystyle\sum_{i=1}^{n} \alpha_i\,u_i=\eta_2$ determines a lower bound of $p(x)$. We follow the three steps below to construct the N-barrier:

\begin{enumerate}
  \item Let the hyperplane $\displaystyle\sum_{i=1}^{n}\frac{\displaystyle u_i}{\displaystyle\underaccent\bar{u}_i}=1$ be tangent to the hyper-ellipsoid $\displaystyle\sum_{i=1}^{n} \alpha_i\,d_i\,u_i^m=\lambda_1$ at $(u_1,u_2,\cdots,u_n)$ with $u_1,u_2,\cdots,u_n>0$ such that $\mathcal{Q}_{\lambda_1}\subset \underaccent\bar{\mathcal{R}}$. This leads to the following equations:
\begin{eqnarray}
\alpha_i\,d_i\,u_i^{m-1}\,\underaccent\bar{u}_i & = & \alpha_j\,d_j\,u_j^{m-1}\,\underaccent\bar{u}_j,\ \ i,j= 1,2,\cdots,n;\\
\sum_{i=1}^{n}\frac{\displaystyle u_i}{\displaystyle\underaccent\bar{u}_i} & = &1;\\
\sum_{i=1}^{n} \alpha_i\,d_i\,u_i^m & = & \lambda_1.
\end{eqnarray}
By Lemma~\ref{lem: lambda1} (see Section~\ref{sec: appendix}), $\lambda_1$ is determined by
\begin{equation}\label{eqn: lambda1 lower bound}
\lambda_1=\Bigg(\sum_{i=1}^{n}\frac{1}{\sqrt[m-1]{\alpha_i\,d_i\,\underaccent\bar{u}_i^m}}\Bigg)^{1-m}.
\end{equation}
  \item Setting 
\begin{equation}\label{eqn: eta1 lower bound}
\eta_1=\sqrt[m]{\lambda_1\displaystyle \min_{1\le i \le n} \frac{\alpha_i^{m-1}}{d_i}}, 
\end{equation}
the hyperplane $\displaystyle\sum_{i=1}^{n} \alpha_i\,u_i=\eta_1$ has the $n$ intercepts $\Big(\displaystyle\frac{\eta_1}{\alpha_1},0,\cdots,0\Big)$, $\Big(0,\displaystyle\frac{\eta_1}{\alpha_2},0,\cdots,0\Big)$,$\cdots$, and $\Big(0,0,\cdots,0,\displaystyle\frac{\eta_1}{\alpha_n}\Big)$ and the hyper-ellipsoid $\displaystyle\sum_{i=1}^{n} \alpha_i\,d_i\,u_i^m=\lambda_1$ has the $n$ intercepts $\bigg(\displaystyle\sqrt[m]{\frac{\lambda_1}{\alpha_1\,d_1}},0,\cdots,0\bigg)$, $\bigg(0,\displaystyle\sqrt[m]{\frac{\lambda_1}{\alpha_2\,d_2}},0,\cdots,0\bigg)$,$\cdots$, and $\bigg(0,0,\cdots,0,\displaystyle\sqrt[m]{\frac{\lambda_1}{\alpha_n\,d_n}}\bigg)$. It is easy to verify that $\mathcal{P}_{\eta_1}\subset \mathcal{Q}_{\lambda_1}$ since $\displaystyle\frac{\eta_1}{\alpha_j}\le\displaystyle\sqrt[m]{\frac{\lambda_1}{\alpha_j\,d_j}}$ for $j=1,2,\cdots,n$. Indeed, we have
\begin{align}
\label{}
\frac{\eta_1}{\alpha_j}
=&\bigg(\min_{1\le i \le n} \frac{\lambda_1\,\alpha_i^{m-1}}{d_i}\bigg)^{\frac{1}{m}}
\frac{1}{\alpha_j}
\\ \notag
\le&\bigg(\frac{\lambda_1\,\alpha_j^{m-1}}{d_j\,\alpha_j^m}\bigg)^{\frac{1}{m}} 
=\bigg(\frac{\lambda_1}{\alpha_j\,d_j}\bigg)^{\frac{1}{m}}.
\end{align}
  \item Let the hyperplane $\displaystyle\sum_{i=1}^{n} \alpha_i\,u_i=\eta_1$ be tangent to the hyper-ellipsoid $\displaystyle\sum_{i=1}^{n} \alpha_i\,d_i\,u_i^m=\lambda_2$ at $(u_1,u_2,\cdots,u_n)$ with $u_1,u_2,\cdots,u_n>0$ such that $ \mathcal{Q}_{\lambda_2}\subset \mathcal{P}_{\eta_1}$. This leads to the following equations:
\begin{eqnarray}
d_i\,u_i^{m-1} & = & d_j\,u_j^{m-1} ,\ \ i,j= 1,2,\cdots,n;\\ 
\sum_{i=1}^{n} \alpha_i\,u_i & = & \eta_1;\\
\sum_{i=1}^{n} \alpha_i\,d_i\,u_i^m & = & \lambda_2.
\end{eqnarray}
Employing Lemma~\ref{lem: lambda2} in Section~\ref{sec: appendix}, we obtain
\begin{equation}\label{eqn: lambda2 lower bound}
\lambda_2=\eta_1^{m}\,\Bigg(\sum_{i=1}^{n}\frac{\alpha_i}{\sqrt[m-1]{d_i}}\Bigg)^{1-m}.
\end{equation}

\end{enumerate}
Steps (i)$\sim$(iii) complete the construction of the N-barrier. As in step (ii), we determine $\eta_2$ by
\begin{equation}\label{eqn: eta2 lower bound}
\eta_2=\sqrt[m]{\lambda_2\displaystyle \min_{1\le i \le n} \frac{\alpha_i^{m-1}}{d_i}}
\end{equation}
such that $\mathcal{P}_{\eta_2}\subset \mathcal{Q}_{\lambda_2}$. From \eqref{eqn: lambda1 lower bound}, \eqref{eqn: eta1 lower bound}, \eqref{eqn: lambda2 lower bound} and \eqref{eqn: eta2 lower bound}, it follows immediately that $\eta_2$ is given by
\begin{align}
\label{}
\eta_2 
&= \lambda_2^{\frac{1}{m}} \bigg(\min_{1\le i \le n} \frac{\alpha_i^{m-1}}{d_i}\bigg)^{\frac{1}{m}}  
  = \eta_1\,\Bigg(\sum_{i=1}^{n}\frac{\alpha_i}{\sqrt[m-1]{d_i}}\Bigg)^{\frac{1-m}{m}}
     \bigg(\min_{1\le i \le n} \frac{\alpha_i^{m-1}}{d_i}\bigg)^{\frac{1}{m}}\\ \notag
&= \lambda_1^{\frac{1}{m}}\Bigg(\sum_{i=1}^{n}\frac{\alpha_i}{\sqrt[m-1]{d_i}}\Bigg)^{\frac{1-m}{m}}
     \bigg(\min_{1\le i \le n} \frac{\alpha_i^{m-1}}{d_i}\bigg)^{\frac{2}{m}} \\ \notag    
&=\Bigg(\sum_{i=1}^{n}\frac{1}{\sqrt[m-1]{\alpha_i\,d_i\,\underaccent\bar{u}_i^m}}\Bigg)^{\frac{1-m}{m}}
     \Bigg(\sum_{i=1}^{n}\frac{\alpha_i}{\sqrt[m-1]{d_i}}\Bigg)^{\frac{1-m}{m}} 
     \bigg(\min_{1\le i \le n} \frac{\alpha_i^{m-1}}{d_i}\bigg)^{\frac{2}{m}}.     
\end{align}

(The construction of the N-barrier for the simplified case $m=n=2$ is illustrated in Remark~\ref{rem: N-barrier for lower bounds}, which provides an intuitive idea of the construction of the N-barrier in higher dimensional cases.)

We claim that $q(x)\ge \lambda_2$, $x\in\mathbb{R}$. This proves \eqref{eqn: lower bound of p}, i.e $q(x)\ge \eta_2$, $x\in\mathbb{R}$ since the $\alpha_i>0$ $(i=1,2,\cdots,n)$ are arbitrary and the relationship $\mathcal{P}_{\eta_2}\subset \mathcal{Q}_{\lambda_2}$ holds. Now we prove the claim by contradiction. Suppose that, contrary to our claim, there exists $z\in\mathbb{R}$ such that $q(z)<\lambda_2$. Since $u_i(x)\in C^2(\mathbb{R})$ and  $(u_1,u_2,\cdots,u_n)(\pm\infty)=\textbf{e}_{\pm}$, we may assume $\displaystyle\min_{x\in\mathbb{R}} q(x)=q(z)$. 
We denote respectively by $z_2$ and $z_1$ the first points at which the solution $(u_1(x),u_2(x),\cdots,u_n(x))$ intersects the hyper-ellipsoid $\displaystyle\sum_{i=1}^{n} \alpha_i\,d_i\,u_i^m=\lambda_1$ when $x$ moves from $z$ towards $\infty$ and $-\infty$. 
For the case where $\theta\leq0$, we integrate \eqref{eqn: ODE for p and q} with respect to $x$ from $z_1$ to $z$ and obtain
\begin{equation}\label{eqn: integrating eqn}
q'(z)-q'(z_1)+\theta\,(p(z)-p(z_1))+\int_{z_1}^{z}F(u_1(x),u_2(x),\cdots,u_n(x))\,dx\leq0.
\end{equation}
On the other hand we have:
\begin{itemize}
  \item $q'(z)=0$ because of $\displaystyle\min_{x\in\mathbb{R}} q(x)=q(z)$;
  \item $q(z_1)=\lambda_1$ follows from the fact that $z_1$ is on the hyper-ellipsoid $\displaystyle\sum_{i=1}^{n} \alpha_i\,d_i\,u_i^m=\lambda_1$. Since $z_1$ is the first point
  for $q(x)$ taking the value $\lambda_1$ when $x$ moves from $z$ to $-\infty$, we conclude that $q(z_1+\delta)\leq \lambda_1$ for $z-z_1>\delta>0$
  and $q'(z_1)\leq 0$;
  \item $p(z)<\eta_1$ since $z$ is below the hyperplane $\displaystyle\sum_{i=1}^{n} \alpha_i\,u_i=\eta_1$; $p(z_1)>\eta_1$ since $z_1$ is above the hyperplane $\displaystyle\sum_{i=1}^{n} \alpha_i\,u_i=\eta_1$;
  \item let $\mathcal{F}_+=\Big\{(u_1,u_2,\cdots,u_n)\,\Big|\, F(u_1,u_2,\cdots,u_n)> 0, u_1,u_2,\cdots,u_n\ge0\Big\}$. 
  Due to the fact that $(u_1(z_1),u_2(z_1),\cdots,u_n(z_1))$ is on the hyper-ellipsoid $\displaystyle\sum_{i=1}^{n} \alpha_i\,d_i\,u_i^m=\lambda_1$ and $(u_1(z),u_2(z),\cdots,u_n(z))$ $\in\mathcal{Q}_{\lambda_2}$, $(u_1(z_1),u_2(z_1),\cdots,u_n(z_1))$, we have $(u_1(z),u_2(z),\cdots,u_n(z))$ $\in\underaccent\bar{\mathcal{R}}$ by \eqref{eqn: Q1<P<Q2<R}. Because of $\mathbf{[\underline{H}]}$ and $F(u_1,u_2,\cdots,u_n)=\displaystyle\sum_{i=1}^{n} \alpha_i\,u_i^{l_i}\,f_i(u_1,u_2,\cdots,u_n)$, it is easy to see that 
\begin{equation}
\Big\{(u_1(x),u_2(x),\cdots,u_n(x))\,\Big|\,z_1\le x \le z\Big\} \subset\underaccent\bar{\mathcal{R}} \subset \mathcal{F}_+.
\end{equation}  
Therefore we have $\displaystyle\int_{z_1}^{z}F(u_1(x),u_2(x),\cdots,u_n(x))\,dx>0$.
\end{itemize}

Combining the above arguments, we obtain
\begin{equation}
q'(z)-q'(z_1)+\theta\,(p(z)-p(z_1))+\int_{z_1}^{z}F(u_1(x),u_2(x),\cdots,u_n(x))\,dx>0,
\end{equation}
which contradicts \eqref{eqn: integrating eqn}. Therefore when $\theta\leq0$, $q(x)\geq \lambda_2$ for $x\in \mathbb{R}$. For the case where $\theta\geq0$, integrating \eqref{eqn: ODE for p and q} with respect to $x$ from $z$ to $z_2$ yields
\begin{equation}\label{eqn: eqn by integrate from z to z2}
q'(z_2)-q'(z)+\theta\,(p(z_2)-p(z))+\int_{z}^{z_2}F(u_1(x),u_2(x),\cdots,u_n(x))\,dx\leq0.
\end{equation}
In a similar manner, it can be shown that $q'(z_2)\ge 0$, $q'(z)=0$, $p(z_2)>\eta$, $p(z)<\eta$, and 
\begin{equation}
\int_{z}^{z_2}F(u_1(x),u_2(x),\cdots,u_n(x))\,dx>0.
\end{equation}
These together contradict \eqref{eqn: eqn by integrate from z to z2}. Consequently, \eqref{eqn: lower bound of p} is proved and the proof is completed.


\end{proof}


\begin{remark}[\textbf{N-barrier for lower bounds}]\label{rem: N-barrier for lower bounds}
When $\sigma_1=\sigma_2=c_{11}=c_{22}=1$, $c_{12}=a_1$, and $c_{21}=a_2$ in \textbf{(NDC-tw)} with the asymptotic behavior $\textbf{e}_{-}=(1,0)$ and $\textbf{e}_{+}=(0,1)$, we are led to the problem 
\begin{equation}\label{eqn: 2 species LV rem}
\begin{cases}
\vspace{3mm} 
d_1\,(u^2)_{xx}+\theta\,u_x+u\,(1-u-a_1\,v)=0,\ \ &x\in\mathbb{R},\\
\vspace{3mm} 
d_2\,(v^2)_{xx}+\theta\,v_x+v\,(1-a_2\,u-v)=0,\ \ &x\in\mathbb{R},\\
(u,v)(-\infty)=(1,0),\ \ (u,v)(+\infty)=(0,1).
\end{cases}
\end{equation}
To satisfy the hypothesis $\mathbf{[H]}$, we let as in the proof of Corollary~\ref{cor: NBMP for NDC-tw}
\begin{align}
 \label{eqn: ubar rem} 
  \underaccent\bar{u} &= \min \Big(1,\frac{1}{a_2}\Big),    \\
 \label{eqn: vbar rem}
  \underaccent\bar{v} &= \min \Big(1,\frac{1}{a_1}\Big).      
\end{align}
For simplicity, we shall always assume the bistable condition $a_1,a_2>1$ for \eqref{eqn: 2 species LV rem}. This gives $\underaccent\bar{u}=\displaystyle\frac{1}{a_2}$ and $\underaccent\bar{v}=\displaystyle\frac{1}{a_1}$. We readily verify that under $a_1,a_2>1$, the quadratic curve 
\begin{equation}
F(u,v):=\alpha\,u\,(1-u-a_1\,v)+\beta\,v\,(1-a_2\,u-v)=0
\end{equation}
in the first quadrant of the $uv$-plane is a hyperbola for any $\alpha,\beta>0$ and it passes through the equilibria $(0,0)$, $(1,0)$, $(1,0)$ and $\Big(\displaystyle\frac{a_1-1}{a_1\,a_2-1},\displaystyle\frac{a_2-1}{a_1\,a_2-1}\Big)$ .

We are now in the position to follow the three steps in the proof of Proposition~\ref{prop: lower bed} to construct the N-barrier for the problem \eqref{eqn: 2 species LV rem}.

\begin{enumerate}
  \item Since the line $\displaystyle\frac{u}{\underaccent\bar{u}}+\displaystyle\frac{v}{\underaccent\bar{v}}=1$ is tangent to the ellipse $\alpha\,d_1\,u^2+\beta\,d_2\,v^2=\lambda_1$ at $(u,v)$ in the first quadrant of the $uv$-plane, this leads to the following equations:
\begin{eqnarray}
\frac{\alpha\,d_1\,u}{\beta\,d_2\,v} & = & \frac{\underaccent\bar{v}}{\underaccent\bar{u}},\\
\frac{\displaystyle u}{\displaystyle\underaccent\bar{u}}+\frac{\displaystyle v}{\displaystyle\underaccent\bar{v}} & = &1,\\
\alpha\,d_1\,u^2+\beta\,d_2\,v^2 & = & \lambda_1.
\end{eqnarray}
By Lemma~\ref{lem: lambda1} (see Section~\ref{sec: appendix}), $\lambda_1$ is given by
\begin{equation}\label{eqn: lambda1 lower bound rem}
\lambda_1=\displaystyle\frac{1}{\displaystyle\frac{1}{\alpha\,d_1\,\underaccent\bar{u}^2}+\displaystyle\frac{1}{\beta\,d_2\,\underaccent\bar{v}^2}}
=\displaystyle\frac{\alpha\,\beta\,d_1\,d_2\,\underaccent\bar{u}^2\,\underaccent\bar{v}^2}{\alpha\,d_1\,\underaccent\bar{u}^2+\beta\,d_2\,\underaccent\bar{v}^2}.
\end{equation}
  \item 
The $u$-coordinate of the $u$-intercept and the $v$-coordinate of the $v$-intercept of the ellipse $\alpha\,d_1\,u^2+\beta\,d_2\,v^2=\lambda_1$ are $\displaystyle\sqrt{\displaystyle\frac{\lambda_1}{\alpha\,d_1}}$ and $\displaystyle\sqrt{\displaystyle\frac{\lambda_1}{\beta\,d_2}}$, respectively; the $u$-coordinate of the $u$-intercept and the $v$-coordinate of the line $\eta_1=\alpha\,u+\beta\,v$ are $\displaystyle\frac{\eta_1}{\alpha}$ and $\displaystyle\frac{\eta_1}{\beta}$, respectively. Because of 
\begin{equation}\label{eqn: eta1 lower bound rem}
\eta_1=\sqrt{\lambda_1\,\min \Big(\frac{\alpha}{d_1},\frac{\beta}{d_2}\Big)}, 
\end{equation}

\begin{itemize}
  \item \underline{when $\displaystyle\min \Big(\frac{\alpha}{d_1},\frac{\beta}{d_2}\Big)=\frac{\alpha}{d_1}$}, we clearly have
  \begin{align}
\label{}
  \frac{\eta_1}{\alpha} & =  \frac{1}{\alpha}\sqrt{\frac{\lambda_1\,\alpha}{d_1}}=\sqrt{\frac{\displaystyle\lambda_1}{\displaystyle\alpha\,d_1}}, \\
  \frac{\eta_1}{\beta}  & = \frac{1}{\beta}\sqrt{\frac{\lambda_1\,\alpha}{d_1}}\le \frac{\sqrt{\lambda_1}}{\beta}\sqrt{\frac{\beta}{d_2}}=\sqrt{\frac{\displaystyle\lambda_1}{\displaystyle\beta\,d_2}};
\end{align}
  \item \underline{when $\displaystyle\min \Big(\frac{\alpha}{d_1},\frac{\beta}{d_2}\Big)=\frac{\beta}{d_2}$}, we clearly have
  \begin{align}
\label{}
  \frac{\eta_1}{\alpha}  & = \frac{1}{\alpha}\sqrt{\frac{\lambda_1\,\beta}{d_2}}\le \frac{\sqrt{\lambda_1}}{\alpha}\sqrt{\frac{\alpha}{d_1}}=\sqrt{\frac{\displaystyle\lambda_1}{\displaystyle\alpha\,d_1}}, \\
  \frac{\eta_1}{\beta} & =  \frac{1}{\beta}\sqrt{\frac{\lambda_1\,\beta}{d_2}}=\sqrt{\frac{\displaystyle\lambda_1}{\displaystyle\beta\,d_2}}.
\end{align}
\end{itemize}
This means that when $\displaystyle\min \Big(\frac{\alpha}{d_1},\frac{\beta}{d_2}\Big)=\frac{\alpha}{d_1}$, the ellipse $\alpha\,d_1\,u^2+\beta\,d_2\,v^2=\lambda_1$ and the line $\eta_1=\alpha\,u+\beta\,v$ possesess the same $u$-coordinate of the $u$-intercept, i.e. $\displaystyle\sqrt{\displaystyle\frac{\lambda_1}{\alpha\,d_1}}=\displaystyle\frac{\eta_1}{\alpha}$; meanwhile, the inequality $\displaystyle\frac{\eta_1}{\beta}\le\sqrt{\frac{\displaystyle\lambda_1}{\displaystyle\beta\,d_2}}$ indicates that the $v$-coordinate of the $v$-intercept of the line $\eta_1=\alpha\,u+\beta\,v$ is not larger than that of the $v$-intercept of the ellipse $\alpha\,d_1\,u^2+\beta\,d_2\,v^2=\lambda_1$. A similar conclusion can be drawn for the case of $\displaystyle\min \Big(\frac{\alpha}{d_1},\frac{\beta}{d_2}\Big)=\frac{\beta}{d_2}$. 
  \item The fact that the line $\eta_1=\alpha\,u+\beta\,v$ is tangent to the ellipse $\alpha\,d_1\,u^2+\beta\,d_2\,v^2=\lambda_2$ at $(u,v)$ in the first quadrant of the $uv$-plane yields the following equations:
\begin{eqnarray}
\frac{\alpha\,d_1\,u}{\beta\,d_2\,v} & = & \frac{\alpha}{\beta},\\
\alpha\,u+\beta\,v & = & \eta_1,\\
\alpha\,d_1\,u^2+\beta\,d_2\,v^2 & = & \lambda_2.
\end{eqnarray}
Employing Lemma~\ref{lem: lambda2} in Section~\ref{sec: appendix}, we obtain
\begin{equation}\label{eqn: lambda2 lower bound rem}
\lambda_2=\displaystyle\frac{\eta_1^2}{\displaystyle\frac{\alpha}{d_1}+\frac{\beta}{d_2}}
=\displaystyle\frac{\eta_1^2\,d_1\,d_2}{\alpha\,d_2+\beta\,d_1}.
\end{equation}
\end{enumerate}
The above three steps complete the construction of the N-barrier. Finally, we determine the line $\eta_2=\alpha\,u+\beta\,v$ by setting
\begin{equation}\label{eqn: eta2 lower bound rem}
\eta_2=\sqrt{\lambda_2\,\min \Big(\frac{\alpha}{d_1},\frac{\beta}{d_2}\Big)}
\end{equation}
such that, as in step (ii), the line $\eta_2=\alpha\,u+\beta\,v$ lies \textit{entirely below} the ellipse $\alpha\,d_1\,u^2+\beta\,d_2\,v^2=\lambda_2$ in the first quadrant of the $uv$-plane. Combining \eqref{eqn: lambda1 lower bound rem}, \eqref{eqn: eta1 lower bound rem}, \eqref{eqn: lambda2 lower bound rem} and \eqref{eqn: eta2 lower bound rem}, we arrive at 
\begin{align}
\label{}
\eta_2 
&= \sqrt{\lambda_2\,\min \Big(\frac{\alpha}{d_1},\frac{\beta}{d_2}\Big)}  
  = \eta_1\,\sqrt{\min \Big(\frac{\alpha}{d_1},\frac{\beta}{d_2}\Big)\,\frac{d_1\,d_2}{\alpha\,d_2+\beta\,d_1}}\\ \notag
&= \sqrt{\lambda_1\,\min \Big(\frac{\alpha}{d_1},\frac{\beta}{d_2}\Big)}\,\sqrt{\min \Big(\frac{\alpha}{d_1},\frac{\beta}{d_2}\Big)\,\frac{d_1\,d_2}{\alpha\,d_2+\beta\,d_1}} \\ \notag    
&= \min \Big(\frac{\alpha}{d_1},\frac{\beta}{d_2}\Big)\,\sqrt{\frac{\displaystyle \alpha\,\beta\,d_1\,d_2\,\underaccent\bar{u}^2\,\underaccent\bar{v}^2}{\displaystyle\alpha\,d_1\,\underaccent\bar{u}^2+\beta\,d_2\,\underaccent\bar{v}^2}\,\frac{d_1\,d_2}{\alpha\,d_2+\beta\,d_1}} \\ \notag
&= d_1\,d_2\,\underaccent\bar{u}\,\underaccent\bar{v}\,\min \Big(\frac{\alpha}{d_1},\frac{\beta}{d_2}\Big)\,\sqrt{\frac{\displaystyle \alpha\,\beta}{(\displaystyle\alpha\,d_1\,\underaccent\bar{u}^2+\beta\,d_2\,\underaccent\bar{v}^2)\,(\alpha\,d_2+\beta\,d_1)}}\\ \notag
&= \alpha\,\beta\,\underaccent\bar{u}\,\underaccent\bar{v}\,\min \Big(\frac{d_1}{\alpha},\frac{d_2}{\beta}\Big)\,\sqrt{\frac{\displaystyle \alpha\,\beta}{(\displaystyle\alpha\,d_1\,\underaccent\bar{u}^2+\beta\,d_2\,\underaccent\bar{v}^2)\,(\alpha\,d_2+\beta\,d_1)}}.
\end{align}
The lower bound $\eta_2 $ coincides with that given in Corollary~\ref{cor: NBMP for NDC-tw}.

It follows immediately from step (ii) that there are two conditions: $\displaystyle\min \Big(\frac{\alpha}{d_1},\frac{\beta}{d_2}\Big)=\frac{\alpha}{d_1}$ and $\displaystyle\min \Big(\frac{\alpha}{d_1},\frac{\beta}{d_2}\Big)=\frac{\beta}{d_2}$. We show the N-barrier for each condition in Figure~\ref{fig: N-barrier lower bound}: the N-barrier for the case $\displaystyle\min \Big(\frac{\alpha}{d_1},\frac{\beta}{d_2}\Big)=\frac{\alpha}{d_1}$ is shown in Figure~\ref{fig: lower bound alpha/d1<beta/d2}, while the one for the case $\displaystyle\min \Big(\frac{\alpha}{d_1},\frac{\beta}{d_2}\Big)=\frac{\beta}{d_2}$ is shown in Figure~\ref{fig: lower bound beta/d2<alpha/d1}. We note that through the example of Figure~\ref{fig: N-barrier lower bound} in which the N-barrier for the lower dimensional problem \eqref{eqn: 2 species LV rem} is constructed, the N-barrier in the hyper-space in the proof of Proposition~\ref{prop: lower bed} become immediate.

\begin{figure}[ht!]
\centering
\mbox{
\subfigure[\underline{$\displaystyle\min \Big(\frac{\alpha}{d_1},\frac{\beta}{d_2}\Big)=\frac{\alpha}{d_1}$}: $d_1=3$, $d_2=4$, $a_1=2$, $a_2=3$, $\alpha=1$, $\beta=2$, $\underline{u}=\displaystyle\frac{1}{3}$, $\underline{v}=\displaystyle\frac{1}{2}$, $\lambda_1=\displaystyle\frac{2}{7}$, $\lambda_2=\displaystyle\frac{4}{35}$, $\eta_1=\displaystyle\sqrt{\frac{2}{21}}$, $\eta_2=\displaystyle\frac{2}{\sqrt{105}}$.]{\includegraphics[width=0.53\textwidth]{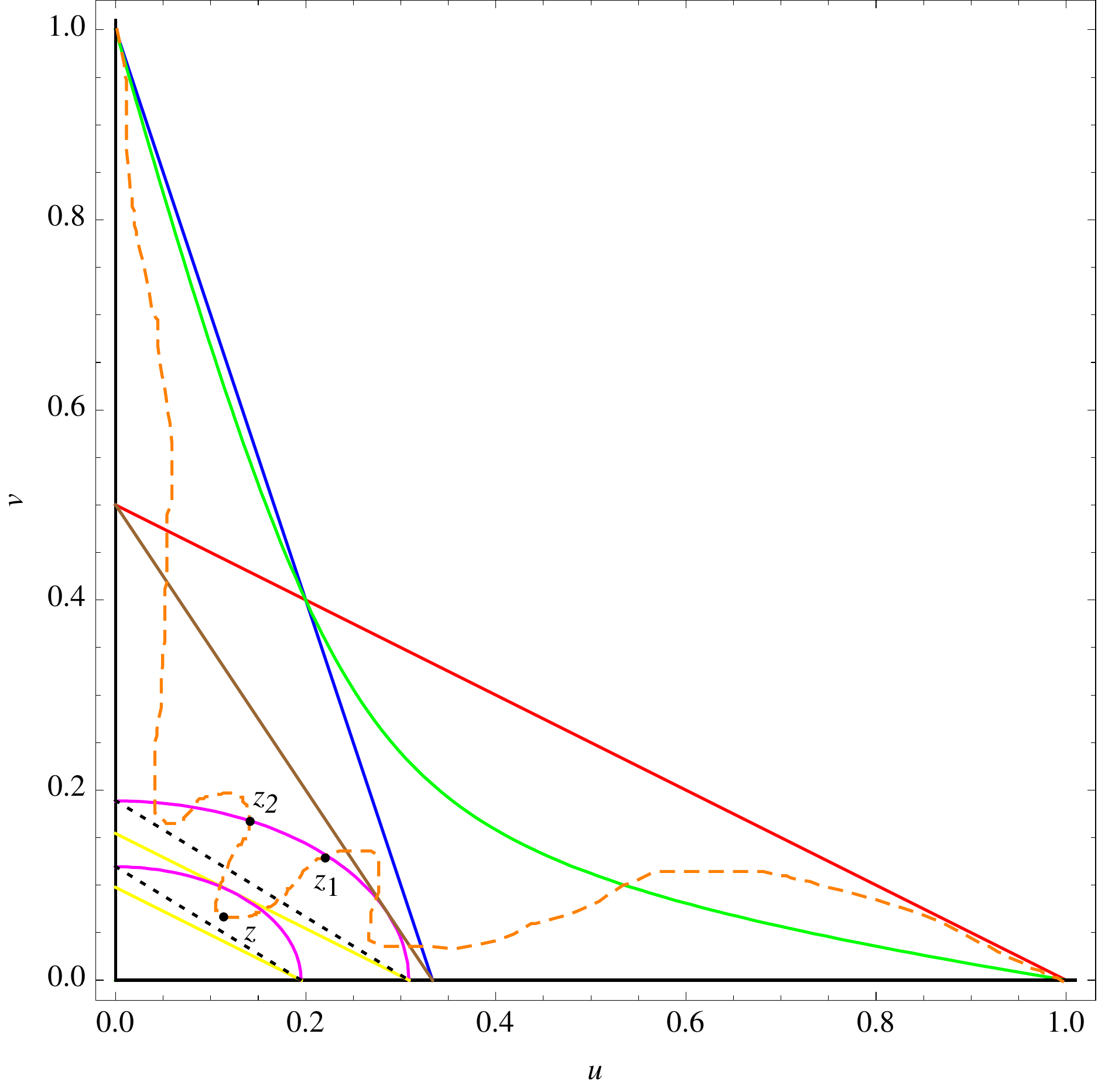}
             \label{fig: lower bound alpha/d1<beta/d2}    } \quad \hspace{0mm}
\subfigure[\underline{$\displaystyle\min \Big(\frac{\alpha}{d_1},\frac{\beta}{d_2}\Big)=\frac{\beta}{d_2}$}: $d_1=3$, $d_2=4$, $a_1=2$, $a_2=3$, $\alpha=1$, $\beta=1$, $\underline{u}=\displaystyle\frac{1}{3}$, $\underline{v}=\displaystyle\frac{1}{2}$, $\lambda_1=\displaystyle\frac{1}{4}$, $\lambda_2=\displaystyle\frac{3}{28}$, $\eta_1=\displaystyle\frac{1}{4}$, $\eta_2=\displaystyle\frac{\sqrt{\displaystyle\frac{3}{7}}}{4}$.]{\includegraphics[width=0.53\textwidth]{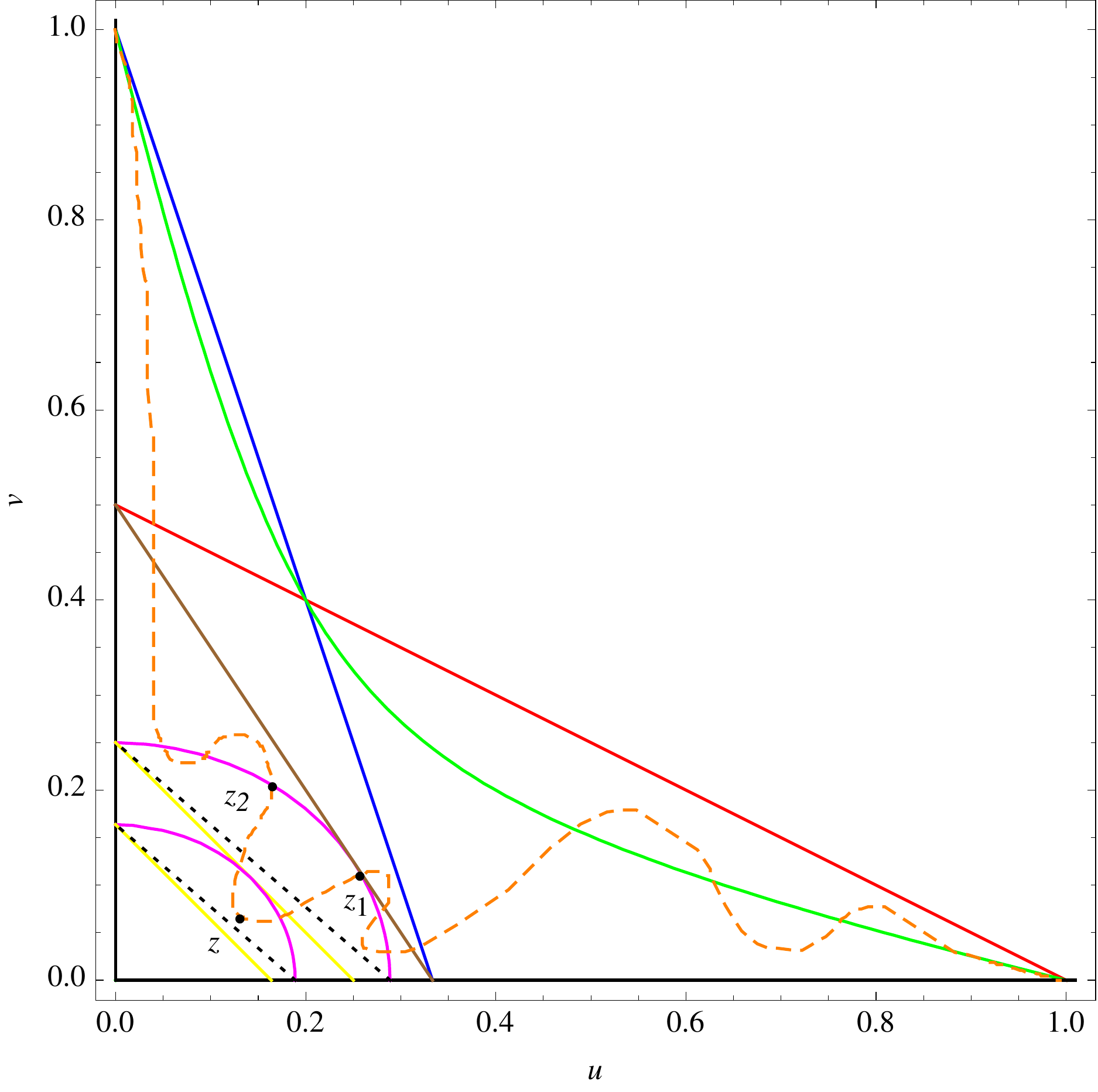}
             \label{fig: lower bound beta/d2<alpha/d1}    } \quad \hspace{0mm}
             }
\caption{\small Red line: $1-u-a_1\,v=0$; blue line: $1-a_2\,u-v=0$; green curve: $F(u,v):=\alpha\,u\,(1-u-a_1\,v)+\beta\,v\,(1-a_2\,u-v)=0$; brown line: $\displaystyle\frac{u}{\underline{u}}+\frac{v}{\underline{v}}=1$, where $\underline{u}$ and $\underline{v}$ are given by \eqref{eqn: ubar rem} and \eqref{eqn: vbar rem}  ; magenta ellipse (above): $\alpha\,d_1\,u^2+\beta\,d_2\,v^2=\lambda_1$, where $\lambda_1$ is given by \eqref{eqn: lambda1 lower bound rem}; magenta ellipse (below): $\alpha\,d_1\,u^2+\beta\,d_2\,v^2=\lambda_2$, where $\lambda_2$ is given by \eqref{eqn: lambda2 lower bound rem}; yellow line (above): $\alpha\,u+\beta\,v=\eta_1$, where $\eta_1$ is given by \eqref{eqn: eta1 lower bound rem}; yellow line (below): $\alpha\,u+\beta\,v=\eta_2$, where $\eta_2$ is given by \eqref{eqn: eta2 lower bound rem}; dashed orange curve: the solution $(u(x),v(x))$; dotted line (above): $\displaystyle\frac{u}{\sqrt{\frac{\lambda_1}{\alpha\,d_1}}}+\displaystyle\frac{v}{\sqrt{\frac{\lambda_1}{\beta\,d_2}}}=1$; dotted line (below): $\displaystyle\frac{u}{\sqrt{\frac{\lambda_2}{\alpha\,d_1}}}+\displaystyle\frac{v}{\sqrt{\frac{\lambda_2}{\beta\,d_2}}}=1$.
\label{fig: N-barrier lower bound}}
\end{figure}

\end{remark}

\begin{prop} [\textbf{Upper bound in NBMP}]\label{prop: upper bed}
Suppose that $u_i(x)\in C^2(\mathbb{R})$ with $u_i(x)\ge0$ $(i=1,2,\cdots,n)$ and satisfy the following differential inequalities and asymptotic behavior:
\begin{equation*}
\textbf{(BVP-l)}
\begin{cases}
\vspace{3mm}
d_i\,(u_i^m)_{xx}+\theta\,(u_i)_{x}+u_i^{l_i}\,f_i(u_1,u_2,\cdots,u_n)\ge0, \quad x\in\mathbb{R}, \quad i=1,2,\cdots,n, \\
(u_1,u_2,\cdots,u_n)(-\infty)=\textbf{e}_{-},\quad (u_1,u_2,\cdots,u_n)(\infty)=\textbf{e}_{+},
\end{cases}
\end{equation*}
where $\textbf{e}_{-}$ and $\textbf{e}_{+}$ are given by \eqref{eqn: e- and e+}. If the hypothesis 
\begin{itemize}
\item [$\mathbf{[\bar{H}]}$]
For $i=1,2,\cdots,n$, there exist $\bar{u}_i>0$  such that
\begin{eqnarray*}
f_i(u_1,u_2,\cdots,u_n)\le 0 &\text{ whenever } (u_1,u_2,\cdots,u_n)\in\bar{\mathcal{R}},
\end{eqnarray*}
where $\bar{\mathcal{R}}$ is as defined in $\mathbf{[H]}$
\end{itemize}
holds, then we have for any $\alpha_i>0$ $(i=1,2,\cdots,n)$
\begin{equation}\label{eqn: upper bound of p}
\sum_{i=1}^{n} \alpha_i\,u_i(x)\leq 
\sqrt[m]{
\Bigg(\sum_{i=1}^{n}\frac{\alpha_i}{\sqrt[m-1]{d_i}}\Bigg)^{2\,(m-1)} 
\bigg(\max_{1\le i \le n} \frac{d_i}{\alpha_i^{m-1}}\bigg)
\Bigg(\max_{1\le i \le n}\alpha_i\,d_i\,\bar{u}_i^m \Bigg)
}
\end{equation}
\end{prop}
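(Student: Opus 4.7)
The plan is to dualize the N-barrier construction of Proposition~\ref{prop: lower bed}: now $(u_1,\ldots,u_n)(x)$ is sandwiched from above by an \emph{outward} cascade of shells rather than trapped from below by an inward one. Define $p(x) = \sum_{i=1}^n \alpha_i u_i(x)$, $q(x) = \sum_{i=1}^n \alpha_i d_i u_i^m(x)$, and $F = \sum_{i=1}^n \alpha_i u_i^{l_i} f_i$; summing the $n$ inequalities in \textbf{(BVP-l)} after multiplication by $\alpha_i$ yields the master inequality
\begin{equation}\label{eqn: upper master ineq}
q''(x) + \theta\,p'(x) + F(u_1(x),\ldots,u_n(x)) \ge 0, \quad x \in \mathbb{R}.
\end{equation}
I first establish $q(x) \le \mu_2$ globally, from which $p(x) \le \bar\lambda$ follows via the containment $\mathcal{Q}_{\mu_2} \subset \mathcal{P}_{\bar\lambda}$.

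The N-barrier is the outward chain
\begin{equation}\label{eqn: outward chain}
\{u_i \ge 0\} \setminus \bar{\mathcal{R}} \;\subset\; \mathcal{Q}_{\mu_1} \;\subset\; \mathcal{P}_{\nu_1} \;\subset\; \mathcal{Q}_{\mu_2} \;\subset\; \mathcal{P}_{\bar\lambda}.
\end{equation}
Since $\sum \alpha_i d_i u_i^m$ is strictly convex, its maximum on the simplex $\{\sum u_i/\bar{u}_i \le 1,\ u_i \ge 0\}$ is attained at a vertex $\bar u_i\mathbf{e}_i$, so the smallest circumscribing hyper-ellipsoid has $\mu_1 = \max_{1 \le i \le n} \alpha_i d_i \bar u_i^m$. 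Tangency of the hyperplane $\sum \alpha_i u_i = \nu_1$ to the ellipsoid $\sum \alpha_i d_i u_i^m = \mu_1$ from outside---a Lagrange-multiplier computation in the spirit of Lemma~\ref{lem: lambda2}---forces $\nu_1^m = \mu_1\bigl(\sum \alpha_i/\sqrt[m-1]{d_i}\bigr)^{m-1}$. Repeating the same two moves with $\mathcal{P}_{\nu_1}$ in the role of the simplex yields $\mu_2 = \nu_1^m \max_i d_i/\alpha_i^{m-1}$ and finally $\bar\lambda^m = \mu_2\bigl(\sum \alpha_i/\sqrt[m-1]{d_i}\bigr)^{m-1}$, which back-substitutes to the formula in \eqref{eqn: upper bound of p}.

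Suppose for contradiction that $q(z) > \mu_2$ somewhere. By the asymptotic behavior at $\pm\infty$, I may take $z$ to be a global maximum of $q$. Let $z_1 < z < z_2$ be the nearest crossings of the level $\{q = \mu_1\}$. Then $q \ge \mu_1$ on $[z_1, z_2]$, so \eqref{eqn: outward chain} puts $(u_1,\ldots,u_n)(x) \in \bar{\mathcal{R}}$ there, and $\mathbf{[\bar H]}$ gives $F \le 0$ on this interval (strictly near $z$). For $\theta \le 0$, integrating \eqref{eqn: upper master ineq} on $[z_1, z]$ gives
\begin{equation*}
q'(z) - q'(z_1) + \theta\bigl(p(z) - p(z_1)\bigr) + \int_{z_1}^{z} F\, dx \;\ge\; 0.
\end{equation*}
Each term on the left is non-positive, strictly for the integral: $q'(z) = 0$ at the peak; $q'(z_1) \ge 0$ because $q$ ascends on $(z_1, z]$; and the contrapositive of $\mathcal{P}_{\nu_1} \subset \mathcal{Q}_{\mu_2}$ together with $(u_1,\ldots,u_n)(z_1) \in \mathcal{Q}_{\mu_1} \subset \mathcal{P}_{\nu_1}$ yields $p(z) > \nu_1 \ge p(z_1)$, whence $\theta(p(z) - p(z_1)) \le 0$. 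The net strict negativity contradicts the displayed inequality. The case $\theta \ge 0$ is handled symmetrically on $[z, z_2]$ using $q'(z_2) \le 0$ and $p(z_2) \le \nu_1 < p(z)$.

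I expect the main obstacle to be tracking the systematic sign flip relative to the lower-bound proof: reversing the master inequality reverses the required sign of each term in the integrated identity, so the minimum/ascent roles in Proposition~\ref{prop: lower bed} must be replaced by maximum/descent, with the reversed $F$-sign now supplied by $\bar{\mathcal{R}}$ via $\mathbf{[\bar H]}$. A secondary conceptual shift is that Step~1 of the N-barrier now rests on vertex containment (by convexity of $\sum \alpha_i d_i u_i^m$) rather than the interior tangency used in Lemma~\ref{lem: lambda1}, because a smooth convex body circumscribing a polytope is forced to meet it at a vertex.
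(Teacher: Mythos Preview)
Your proposal is correct and follows essentially the same route as the paper's proof: the same master inequality, the same four-layer barrier with identical constants ($\mu_1,\nu_1,\mu_2,\bar\lambda$ match the paper's $\lambda_1,\eta_1,\lambda_2,\eta_2$), and the same contradiction via integration over $[z_1,z]$ or $[z,z_2]$ depending on the sign of $\theta$. The only cosmetic difference is that you keep the $\le$-sublevel sets $\mathcal{P}_\eta,\mathcal{Q}_\lambda$ from Proposition~\ref{prop: lower bed} and phrase the chain as an outward nesting $\subset$, whereas the paper redefines these symbols with $\ge$ and writes the chain as $\supset$; your convexity/vertex argument for $\mu_1$ is in fact a slightly cleaner justification than the paper's coordinatewise intercept comparison.
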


\begin{proof}
We show by employing the N-barrier method as in the proof of Proposition~\ref{prop: lower bed} the upper bound given by \eqref{eqn: upper bound of p}. The construction of an appropriate N-barrier is the main ingredient of our proof. To do this, let
\begin{eqnarray}
\mathcal{P}_{\eta} & = & \Big\{ (u_1,u_2,\cdots,u_n) \;\Big|\; \sum_{i=1}^{n} \alpha_i\,u_i\ge\eta,\; u_1,u_2,\cdots,u_n\ge 0\Big\}; \\
\mathcal{Q}_{\lambda} & = & \Big\{ (u_1,u_2,\cdots,u_n) \;\Big|\; \sum_{i=1}^{n} \alpha_i\,d_i\,u_i\ge\lambda,\; u_1,u_2,\cdots,u_n\ge 0\Big\}.
\end{eqnarray}
Recall \eqref{eqn: definition of p and q} in the proof of Proposition~\ref{prop: lower bed}. Adding the $n$ equations in \textbf{(BVP-l)}, we obtain the equation
\begin{equation}\label{eqn: ODE for p and q upper bound}
\frac{d^2 q(x)}{dx^2}+\theta\,\frac{d p(x)}{dx}+F(u_1(x),u_2(x),\cdots,u_n(x))\ge0,\quad x\in\mathbb{R},
\end{equation}
where $F(u_1,u_2,\cdots,u_n):=\displaystyle\sum_{i=1}^{n} \alpha_i\,u_i^{l_i}\,f_i(u_1,u_2,\cdots,u_n)$.

We determine the positive parameters $\lambda_1$, $\lambda_2$, $\eta_1$ and $\eta_2$ such that the two hyper-ellipsoids $\displaystyle\sum_{i=1}^{n} \alpha_i\,d_i\,u_i^m=\lambda_1$, $\displaystyle\sum_{i=1}^{n} \alpha_i\,d_i\,u_i^m=\lambda_2$, and the two hyperplanes $\displaystyle\sum_{i=1}^{n} \alpha_i\,u_i=\eta_1$, $\displaystyle\sum_{i=1}^{n} \alpha_i\,u_i=\eta_2$ satisfy the relationship
\begin{equation}\label{eqn: Q1<P<Q2<R upper bound}
\mathcal{P}_{\eta_2}\supset \mathcal{Q}_{\lambda_2}\supset \mathcal{P}_{\eta_1}\supset \mathcal{Q}_{\lambda_1}\supset \bar{\mathcal{R}}.
\end{equation}
The hyper-ellipsoids $\displaystyle\sum_{i=1}^{n} \alpha_i\,d_i\,u_i^m=\lambda_1$, $\displaystyle\sum_{i=1}^{n} \alpha_i\,d_i\,u_i^m=\lambda_2$, and the hyperplane $\displaystyle\sum_{i=1}^{n} \alpha_i\,u_i=\eta_1$ form the N-barrier and it turns out that the hyperplane $\displaystyle\sum_{i=1}^{n} \alpha_i\,u_i=\eta_2$ determines the upper bound in \eqref{eqn: upper bound of p}. We follow the three steps below to construct the N-barrier:

\begin{enumerate}
  \item Setting 
\begin{equation}\label{eqn: lambda1 upper bound}
\lambda_1=\max_{1\le i \le n}\alpha_i\,d_i\,\bar{u}_i^m, 
\end{equation}
the hyper-ellipsoid $\displaystyle\sum_{i=1}^{n} \alpha_i\,d_i\,u_i^m=\lambda_1$ has the $n$ intercepts $\bigg(\displaystyle\sqrt[m]{\frac{\lambda_1}{\alpha_1\,d_1}},0,\cdots,0\bigg)$, $\bigg(0,\displaystyle\sqrt[m]{\frac{\lambda_1}{\alpha_2\,d_2}},0,\cdots,0\bigg)$,$\cdots$, and $\bigg(0,0,\cdots,0,\displaystyle\sqrt[m]{\frac{\lambda_1}{\alpha_n\,d_n}}\bigg)$ and the hyperplane $\displaystyle\sum_{i=1}^{n}\frac{\displaystyle u_i}{\displaystyle\bar{u}_i}=1$ has the $n$ intercepts $(\bar{u}_1,0,\cdots,0)$, $(0,\bar{u}_2,0,\cdots,0)$,$\cdots$, and $(0,0,\cdots,0,\bar{u}_n)$. It is easy to verify that $\mathcal{Q}_{\lambda_1}\supset \bar{\mathcal{R}}$ since $\bar{u}_j\le\displaystyle\sqrt[m]{\frac{\lambda_1}{\alpha_j\,d_j}}$ for $j=1,2,\cdots,n$. Indeed, we have
\begin{align}
\label{}
\Big(\frac{\displaystyle\lambda_1}{\displaystyle\alpha_j\,d_j}\Big)^{\frac{1}{m}}
=&\Bigg(\frac{\displaystyle\max_{1\le i \le n}\alpha_i\,d_i\,\bar{u}_i^m}{\displaystyle\alpha_j\,d_j}\Bigg)^{\frac{1}{m}}
\\ \notag
\ge&\Bigg(\frac{\displaystyle\alpha_j\,d_j\,\bar{u}_j^m}{\displaystyle\alpha_j\,d_j}\Bigg)^{\frac{1}{m}} 
=\bar{u}_j.
\end{align}
  \item Let the hyperplane $\displaystyle\sum_{i=1}^{n} \alpha_i\,u_i=\eta_1$ be tangent to the hyper-ellipsoid $\displaystyle\sum_{i=1}^{n} \alpha_i\,d_i\,u_i^m=\lambda_1$ at $(u_1,u_2,\cdots,u_n)$ with $u_1,u_2,\cdots,u_n>0$ such that $\mathcal{P}_{\eta_1}\supset \mathcal{Q}_{\lambda_1}$. This leads to the following equations:
\begin{eqnarray}
d_i\,u_i^{m-1} & = & d_j\,u_j^{m-1} ,\ \ i,j= 1,2,\cdots,n;\\ 
\sum_{i=1}^{n} \alpha_i\,u_i & = & \eta_1;\\
\sum_{i=1}^{n} \alpha_i\,d_i\,u_i^m & = & \lambda_1.
\end{eqnarray}
Employing Lemma~\ref{lem: lambda2} in Section~\ref{sec: appendix}, we obtain
\begin{equation}\label{eqn: eta1 upper bound}
\eta_1=\lambda_1^{\frac{1}{m}}\,\Bigg(\sum_{i=1}^{n}\frac{\alpha_i}{\sqrt[m-1]{d_i}}\Bigg)^{\frac{m-1}{m}}.
\end{equation}
  \item Setting 
\begin{equation}\label{eqn: lambda2 upper bound}
\lambda_2=\eta_1^m\bigg(\max_{1\le i \le n} \frac{d_i}{\alpha_i^{m-1}}\bigg), 
\end{equation}
the hyper-ellipsoid $\displaystyle\sum_{i=1}^{n} \alpha_i\,d_i\,u_i^m=\lambda_2$ has the $n$ intercepts $\Big(\sqrt[m]{\displaystyle\frac{\displaystyle\lambda_2}{\displaystyle\alpha_1\,d_1}},0,\cdots,0\Big)$, $\Big(0,\sqrt[m]{\displaystyle\frac{\displaystyle\lambda_2}{\displaystyle\alpha_2\,d_2}},0,\cdots,0\Big)$,$\cdots$, and $\Big(0,0,\cdots,0,\sqrt[m]{\displaystyle\frac{\displaystyle\lambda_2}{\displaystyle\alpha_n\,d_n}}\Big)$ and the hyperplane $\displaystyle\sum_{i=1}^{n} \alpha_i\,u_i=\eta_1$ has the $n$ intercepts $\Big(\displaystyle\frac{\displaystyle\eta_1}{\displaystyle\alpha_1},0,\cdots,0\Big)$, $\Big(0,\displaystyle\frac{\displaystyle\eta_1}{\displaystyle\alpha_2},0,\cdots,0\Big)$,$\cdots$, and $\Big(0,0,\cdots,0,\displaystyle\frac{\displaystyle\eta_1}{\displaystyle\alpha_n}\Big)$. It is easy to verify that $\mathcal{Q}_{\lambda_2}\supset \mathcal{P}_{\eta_1}$ since $\displaystyle\frac{\displaystyle\eta_1}{\displaystyle\alpha_j}\le\Big(\displaystyle\frac{\displaystyle\lambda_2}{\displaystyle\alpha_j\,d_j}\Big)^{\frac{1}{m}}$ for $j=1,2,\cdots,n$. Indeed, we have
\begin{align}
\label{}
\Big(\frac{\displaystyle\lambda_2}{\displaystyle\alpha_j\,d_j}\Big)^{\frac{1}{m}}
=&\eta_1\Bigg(\frac{\displaystyle\max_{1\le i \le n}d_i\,\alpha_i^{1-m}}{\displaystyle\alpha_j\,d_j}\Bigg)^{\frac{1}{m}}
\\ \notag
\ge&\eta_1\Bigg(\frac{\displaystyle d_j\,\alpha_j^{1-m}}{\displaystyle\alpha_j\,d_j}\Bigg)^{\frac{1}{m}} 
=\frac{\displaystyle\eta_1}{\displaystyle\alpha_j}.
\end{align}
\end{enumerate}
Steps (i)$\sim$(iii) complete the construction of the N-barrier. As in step (ii), we determine $\eta_2$ by
letting the hyperplane $\displaystyle\sum_{i=1}^{n} \alpha_i\,u_i=\eta_2$ be tangent to the hyper-ellipsoid $\displaystyle\sum_{i=1}^{n} \alpha_i\,d_i\,u_i^m=\lambda_2$ at $(u_1,u_2,\cdots,u_n)$ with $u_1,u_2,\cdots,u_n>0$ such that $\mathcal{P}_{\eta_2}\supset \mathcal{Q}_{\lambda_2}$. This leads to the following equations:
\begin{eqnarray}
d_i\,u_i^{m-1} & = & d_j\,u_j^{m-1} ,\ \ i,j= 1,2,\cdots,n;\\ 
\sum_{i=1}^{n} \alpha_i\,u_i & = & \eta_2;\\
\sum_{i=1}^{n} \alpha_i\,d_i\,u_i^m & = & \lambda_2.
\end{eqnarray}
Employing Lemma~\ref{lem: lambda2} in Section~\ref{sec: appendix} again, we obtain
\begin{equation}\label{eqn: eta2 upper bound}
\eta_2=\lambda_2^{\frac{1}{m}}\,\Bigg(\sum_{i=1}^{n}\frac{\alpha_i}{\sqrt[m-1]{d_i}}\Bigg)^{\frac{m-1}{m}}.
\end{equation}
such that $\mathcal{P}_{\eta_2}\subset \mathcal{Q}_{\lambda_2}$. From \eqref{eqn: lambda1 upper bound}, \eqref{eqn: eta1 upper bound}, \eqref{eqn: lambda2 upper bound} and \eqref{eqn: eta2 upper bound}, it follows immediately that $\eta_2$ is given by
\begin{align}
\label{}
\eta_2 
&= \lambda_2^{\frac{1}{m}}\,\Bigg(\sum_{i=1}^{n}\frac{\alpha_i}{\sqrt[m-1]{d_i}}\Bigg)^{\frac{m-1}{m}}  
  = \eta_1\,\bigg(\max_{1\le i \le n} \frac{d_i}{\alpha_i^{m-1}}\bigg)^{\frac{1}{m}}
    \Bigg(\sum_{i=1}^{n}\frac{\alpha_i}{\sqrt[m-1]{d_i}}\Bigg)^{\frac{m-1}{m}}\\ \notag
&= \lambda_1^{\frac{1}{m}}\,\bigg(\max_{1\le i \le n} \frac{d_i}{\alpha_i^{m-1}}\bigg)^{\frac{1}{m}}      
     \Bigg(\sum_{i=1}^{n}\frac{\alpha_i}{\sqrt[m-1]{d_i}}\Bigg)^{\frac{2\,(m-1)}{m}} \\ \notag    
&=\bigg(\max_{1\le i \le n}\alpha_i\,d_i\,\bar{u}_i^m\bigg)^{\frac{1}{m}}\,\bigg(\max_{1\le i \le n} \frac{d_i}{\alpha_i^{m-1}}\bigg)^{\frac{1}{m}}      
     \Bigg(\sum_{i=1}^{n}\frac{\alpha_i}{\sqrt[m-1]{d_i}}\Bigg)^{\frac{2\,(m-1)}{m}}.     
\end{align}

(An illustration of the N-barrier for $m=n=2$ is given in Remark~\ref{rem: N-barrier for upper bounds}.)

As the proof of Proposition~\ref{prop: lower bed}, we claim by contradiction that $q(x)\le\lambda_2$ for $x\in\mathbb{R}$, from which \eqref{eqn: upper bound of p} follows since the $\alpha_i>0$ $(i=1,2,\cdots,n)$ are arbitrary and the relationship $\mathcal{P}_{\eta_2}\supset\mathcal{Q}_{\lambda_2}$ holds. Suppose that, contrary to our claim, there exists $z\in\mathbb{R}$ such that $q(z)>\lambda_2$. Since $u_i(x)\in C^2(\mathbb{R})$ and  $(u_1,u_2,\cdots,u_n)(\pm\infty)=\textbf{e}_{\pm}$, we may assume $\displaystyle\max_{x\in\mathbb{R}} q(x)=q(z)$. 
We denote respectively by $z_2$ and $z_1$ the first points at which the solution $(u_1(x),u_2(x),\cdots,u_n(x))$ intersects the hyper-ellipsoid $\displaystyle\sum_{i=1}^{n} \alpha_i\,d_i\,u_i^m=\lambda_1$ when $x$ moves from $z$ towards $\infty$ and $-\infty$. 
For the case where $\theta\leq0$, we integrate \eqref{eqn: ODE for p and q upper bound} with respect to $x$ from $z_1$ to $z$ and obtain
\begin{equation}\label{eqn: integrating eqn upper bound}
q'(z)-q'(z_1)+\theta\,(p(z)-p(z_1))+\int_{z_1}^{z}F(u_1(x),u_2(x),\cdots,u_n(x))\,dx\geq0.
\end{equation}
On the other hand we have:
\begin{itemize}
  \item $q'(z)=0$ because of $\displaystyle\max_{x\in\mathbb{R}} q(x)=q(z)$;
  \item $q(z_1)=\lambda_1$ follows from the fact that $z_1$ is on the hyper-ellipsoid $\displaystyle\sum_{i=1}^{n} \alpha_i\,d_i\,u_i^m=\lambda_1$. Since $z_1$ is the first point
  for $q(x)$ taking the value $\lambda_1$ when $x$ moves from $z$ to $-\infty$, we conclude that $q(z_1+\delta)\ge\lambda_1$ for $z-z_1>\delta>0$ and $q'(z_1)\ge 0$;
  \item $p(z)>\eta_1$ since $z$ is above the hyperplane $\displaystyle\sum_{i=1}^{n} \alpha_i\,u_i=\eta_1$; $p(z_1)<\eta_1$ since $z_1$ is below the hyperplane $\displaystyle\sum_{i=1}^{n} \alpha_i\,u_i=\eta_1$;
  \item let $\mathcal{F}_-=\Big\{(u_1,u_2,\cdots,u_n)\,\Big|\, F(u_1,u_2,\cdots,u_n)<0, u_1,u_2,\cdots,u_n\ge0\Big\}$. 
  Due to the fact that $(u_1(z_1),u_2(z_1),\cdots,u_n(z_1))$ is on the hyper-ellipsoid $\displaystyle\sum_{i=1}^{n} \alpha_i\,d_i\,u_i^m=\lambda_1$ and $(u_1(z),u_2(z),\cdots,u_n(z))$ $\in\mathcal{Q}_{\lambda_2}$, $(u_1(z_1),u_2(z_1),\cdots,u_n(z_1))$, $(u_1(z),u_2(z),\cdots,u_n(z))$ $\in\underaccent\bar{\mathcal{R}}$ by \eqref{eqn: Q1<P<Q2<R upper bound}. Because of $\mathbf{[\bar{H}]}$ and $F(u_1,u_2,\cdots,u_n)=\displaystyle\sum_{i=1}^{n} \alpha_i\,u_i^{l_i}\,f_i(u_1,u_2,\cdots,u_n)$, it is easy to see that 
\begin{equation}
\Big\{(u_1(x),u_2(x),\cdots,u_n(x))\,\Big|\,z_1\le x \le z\Big\} \subset\bar{\mathcal{R}} \subset \mathcal{F}_-.
\end{equation}  
Therefore we have $\displaystyle\int_{z_1}^{z}F(u_1(x),u_2(x),\cdots,u_n(x))\,dx<0$.
\end{itemize}

Combining the above arguments, we obtain
\begin{equation}
q'(z)-q'(z_1)+\theta\,(p(z)-p(z_1))+\int_{z_1}^{z}F(u_1(x),u_2(x),\cdots,u_n(x))\,dx<0,
\end{equation}
which contradicts \eqref{eqn: integrating eqn upper bound}. Therefore when $\theta\leq0$, $q(x)\leq \lambda_2$ for $x\in \mathbb{R}$. For the case where $\theta\geq0$, integrating \eqref{eqn: ODE for p and q upper bound} with respect to $x$ from $z$ to $z_2$ yields
\begin{equation}\label{eqn: eqn by integrate from z to z2 upper bound}
q'(z_2)-q'(z)+\theta\,(p(z_2)-p(z))+\int_{z}^{z_2}F(u_1(x),u_2(x),\cdots,u_n(x))\,dx\geq0.
\end{equation}
In a similar manner, it can be shown that $q'(z_2)\le 0$, $q'(z)=0$, $p(z_2)<\eta$, $p(z)>\eta$, and 
\begin{equation}
\int_{z}^{z_2}F(u_1(x),u_2(x),\cdots,u_n(x))\,dx<0.
\end{equation}
These together contradict \eqref{eqn: eqn by integrate from z to z2 upper bound}. Consequently, \eqref{eqn: upper bound of p} is proved and the proof is completed.

\end{proof}

\begin{remark}[\textbf{N-barrier for upper bounds}]\label{rem: N-barrier for upper bounds}

We illustrate the construction of the N-barrier in Proposition~\ref{prop: upper bed} for the case when $m=n=2$. For consistency, we use the setting in Remark~\ref{rem: N-barrier for lower bounds}.


\begin{itemize}
  \item [(i)] \textbf{\underline{Ellipse $\alpha\,d_1\,u^2+\beta\,d_2\,v^2=\lambda_1$}}\ \ We first determine the ellipse $\alpha\,d_1\,u^2+\beta\,d_2\,v^2=\lambda_1$ by letting
\begin{equation}\label{eqn: lambda1 upper bound rem}
\lambda_1=
\max
\big(
\alpha\,d_1\,\bar{u}^2,\beta\,d_2\,\bar{v}^2
\big).
\end{equation}
The $u$-coordinate of the $u$-intercept and the $v$-coordinate of the $v$-intercept of the ellipse $\alpha\,d_1\,u^2+\beta\,d_2\,v^2=\lambda_1$ are $\displaystyle\sqrt{\frac{\lambda_1}{\alpha\,d_1}}$ and $\displaystyle\sqrt{\frac{\lambda_1}{\beta\,d_2}}$, respectively; the $u$-coordinate of the $u$-intercept and the $v$-coordinate of the line $\displaystyle\frac{u}{\bar{u}}+\displaystyle\frac{v}{\bar{v}}=1$ are $\bar{u}$ and $\bar{v}$, respectively. It turns out that
\begin{itemize}
  \item when $\displaystyle\max\big(\alpha\,d_1\,\bar{u}^2,\beta\,d_2\,\bar{v}^2\big)=\alpha\,d_1\,\bar{u}^2$, we have 
\begin{alignat}{3}
\displaystyle\sqrt{\frac{\lambda_1}{\alpha\,d_1}} & = \bar{u}, & \qquad 
\displaystyle\sqrt{\frac{\lambda_1}{\beta\,d_2}} &=\bar{u}\,\displaystyle\sqrt{\frac{\alpha\,d_1}{\beta\,d_2}}\ge\bar{u}\,\displaystyle\sqrt{\frac{\bar{v}^2}{\bar{u}^2}} = \bar{v};
\end{alignat}
  \item when $\displaystyle\max\big(\alpha\,d_1\,\bar{u}^2,\beta\,d_2\,\bar{v}^2\big)=\beta\,d_2\,\bar{v}^2$, we have 
\begin{alignat}{3}
\displaystyle\sqrt{\frac{\lambda_1}{\beta\,d_2}} & = \bar{v}, & \qquad 
\displaystyle\sqrt{\frac{\lambda_1}{\alpha\,d_1}} &=\bar{v}\,\displaystyle\sqrt{\frac{\beta\,d_2}{\alpha\,d_1}}\ge\bar{v}\,\displaystyle\sqrt{\frac{\bar{u}^2}{\bar{v}^2}} = \bar{u}.
\end{alignat}
\end{itemize}
This means that the ellipse $\alpha\,d_1\,u^2+\beta\,d_2\,v^2=\lambda_1$ lies \textit{entirely above} the line $\displaystyle\frac{u}{\bar{u}}+\displaystyle\frac{v}{\bar{v}}=1$  in the first quadrant of the $uv$-plane. 
  \item [(ii)] \textbf{\underline{Line $\eta_1=\alpha\,u+\beta\,v$}}\ \ Since the line $\eta_1=\alpha\,u+\beta\,v$ is tangent to the ellipse $\alpha\,d_1\,u^2+\beta\,d_2\,v^2=\lambda_1$ at $(u,v)$ in the first quadrant of the $uv$-plane, we have the following equations:
\begin{eqnarray}
\frac{\alpha\,d_1\,u}{\beta\,d_2\,v} & = & \frac{\alpha}{\beta},\\
\alpha\,u+\beta\,v & = & \eta_1,\\
\alpha\,d_1\,u^2+\beta\,d_2\,v^2 & = & \lambda_1.
\end{eqnarray}
Employing Lemma~\ref{lem: lambda2} in Section~\ref{sec: appendix}, we obtain
\begin{equation}\label{eqn: eta1 upper bound rem}
\eta_1=\sqrt{\lambda_1\,\bigg(\frac{\alpha}{d_1}+\frac{\beta}{d_2}\bigg)}.
\end{equation}
We note that the line $\eta_1=\alpha\,u+\beta\,v$  lies \textit{entirely above} the ellipse $\alpha\,d_1\,u^2+\beta\,d_2\,v^2=\lambda_1$   in the first quadrant of the $uv$-plane.
  \item [(iii)] \textbf{\underline{Ellipse $\alpha\,d_1\,u^2+\beta\,d_2\,v^2=\lambda_2$}}\ \ We determine the ellipse $\alpha\,d_1\,u^2+\beta\,d_2\,v^2=\lambda_2$ by letting
\begin{equation}\label{eqn: lambda2 upper bound rem}
\lambda_2=
\eta_1^2\,
\max
\bigg(
\frac{d_1}{\alpha},\frac{d_2}{\beta}
\bigg).
\end{equation}
The $u$-coordinate of the $u$-intercept and the $v$-coordinate of the $v$-intercept of the ellipse $\alpha\,d_1\,u^2+\beta\,d_2\,v^2=\lambda_2$ are $\displaystyle\sqrt{\frac{\lambda_2}{\alpha\,d_1}}$ and $\displaystyle\sqrt{\frac{\lambda_2}{\beta\,d_2}}$, respectively; the $u$-coordinate of the $u$-intercept and the $v$-coordinate of the line $\eta_1=\alpha\,u+\beta\,v$ are $\displaystyle\frac{\eta_1}{\alpha}$ and $\displaystyle\frac{\eta_1}{\beta}$, respectively. It follows that
\begin{itemize}
  \item when $\displaystyle\max\bigg(\frac{d_1}{\alpha},\frac{d_2}{\beta}\bigg)=\frac{d_1}{\alpha}$, we have 
\begin{alignat}{3}
\displaystyle\sqrt{\frac{\lambda_2}{\alpha\,d_1}} & = \frac{\eta_1}{\alpha}, & \qquad 
\displaystyle\sqrt{\frac{\lambda_2}{\beta\,d_2}} &=\eta_1\,\sqrt{\frac{d_1}{\alpha\,\beta\,d_2}}\ge \eta_1\,\sqrt{\frac{\alpha\,d_2}{\alpha\,\beta^2\,d_2}}=\frac{\eta_1}{\beta};
\end{alignat}
  \item when $\displaystyle\max\bigg(\frac{d_1}{\alpha},\frac{d_2}{\beta}\bigg)=\frac{d_2}{\beta}$, we have 
\begin{alignat}{3}
\displaystyle\sqrt{\frac{\lambda_2}{\beta\,d_2}} & = \frac{\eta_2}{\beta}, & \qquad 
\displaystyle\sqrt{\frac{\lambda_2}{\alpha\,d_1}} &=\eta_1\,\sqrt{\frac{d_2}{\beta\,\alpha\,d_1}}\ge \eta_1\,\sqrt{\frac{\beta\,d_1}{\alpha^2\,\beta\,d_1}}=\frac{\eta_1}{\alpha}.
\end{alignat}
\end{itemize}
We see from the construction of the ellipse $\alpha\,d_1\,u^2+\beta\,d_2\,v^2=\lambda_2$ that the ellipse $\alpha\,d_1\,u^2+\beta\,d_2\,v^2=\lambda_2$ lies \textit{entirely above} the line $\eta_1=\alpha\,u+\beta\,v$ in the first quadrant of the $uv$-plane. 
\end{itemize}

The two ellipses $\alpha\,d_1\,u^2+\beta\,d_2\,v^2=\lambda_1$ and $\alpha\,d_1\,u^2+\beta\,d_2\,v^2=\lambda_2$, and the line $\eta_1=\alpha\,u+\beta\,v$ form the N-barrier. Finally, we find the tangent line of the ellipse $\alpha\,d_1\,u^2+\beta\,d_2\,v^2=\lambda_2$ in the first quadrant of the $uv$-plane by determining the line $\eta_2=\alpha\,u+\beta\,v$ as in step (ii):
\begin{eqnarray}
\frac{\alpha\,d_1\,u}{\beta\,d_2\,v} & = & \frac{\alpha}{\beta},\\
\alpha\,u+\beta\,v & = & \eta_2,\\
\alpha\,d_1\,u^2+\beta\,d_2\,v^2 & = & \lambda_2.
\end{eqnarray}
We obtain
\begin{equation}\label{eqn: eta2 upper bound rem}
\eta_2=\sqrt{\lambda_2\,\bigg(\frac{\alpha}{d_1}+\frac{\beta}{d_2}\bigg)}
\end{equation}
or
\begin{equation}
\eta_2 = 
\bigg(
\frac{\alpha}{d_1}+\frac{\beta}{d_2}
\bigg) 
\sqrt{
\max
\bigg(
\frac{d_1}{\alpha},\frac{d_2}{\beta}
\bigg)
\max
\Big(
\alpha\,d_1\,\bar{u}^2,\beta\,d_2\,\bar{v}^2
\Big)
}
\end{equation}
by combining \eqref{eqn: lambda1 upper bound rem}, \eqref{eqn: eta1 upper bound rem}, \eqref{eqn: lambda2 upper bound rem} and \eqref{eqn: eta2 upper bound rem}.

It is readily seen from that, depending on $\displaystyle\max\big(\alpha\,d_1\,\bar{u}^2,\beta\,d_2\,\bar{v}^2\big)$ and $\displaystyle\max\bigg(\frac{d_1}{\alpha},\frac{d_2}{\beta}\bigg)$,

\begin{figure}
\centering
\mbox{
\subfigure[$\displaystyle\frac{d_1}{\alpha}>\frac{d_2}{\beta}$, $\alpha\,d_1\,\bar{u}^2<\beta\,d_2\,\bar{v}^2$: $d_2=4$, $\beta=2$, $\lambda_1=8$, $\lambda_2=20$, $\eta_1=\displaystyle2\,\sqrt{\frac{5}{3}}$, $\eta_2=\displaystyle5\,\sqrt{\frac{2}{3}}$.]{\includegraphics[width=0.48\textwidth]{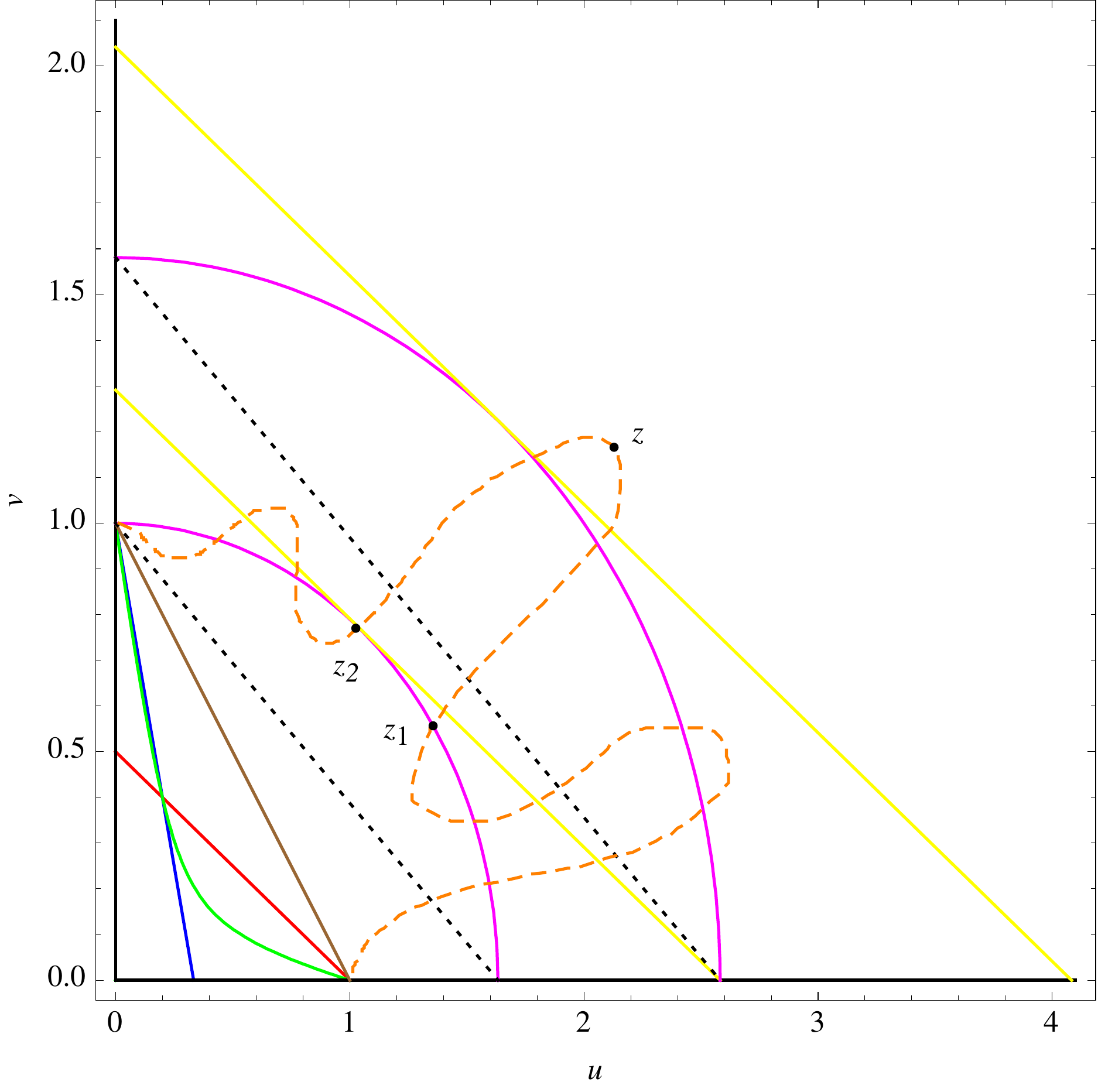}
             \label{fig: upper bound d1/alpha-beta d2 v2}    } \quad \hspace{0mm}
\subfigure[$\displaystyle\frac{d_1}{\alpha}<\frac{d_2}{\beta}$, $\alpha\,d_1\,\bar{u}^2<\beta\,d_2\,\bar{v}^2$: $d_2=4$, $\beta=1$, $\lambda_1=4$, $\lambda_2=\displaystyle\frac{28}{3}$, $\eta_1=\displaystyle\sqrt{\frac{7}{3}}$, $\eta_2=\displaystyle\frac{7}{3}$.]{\includegraphics[width=0.48\textwidth]{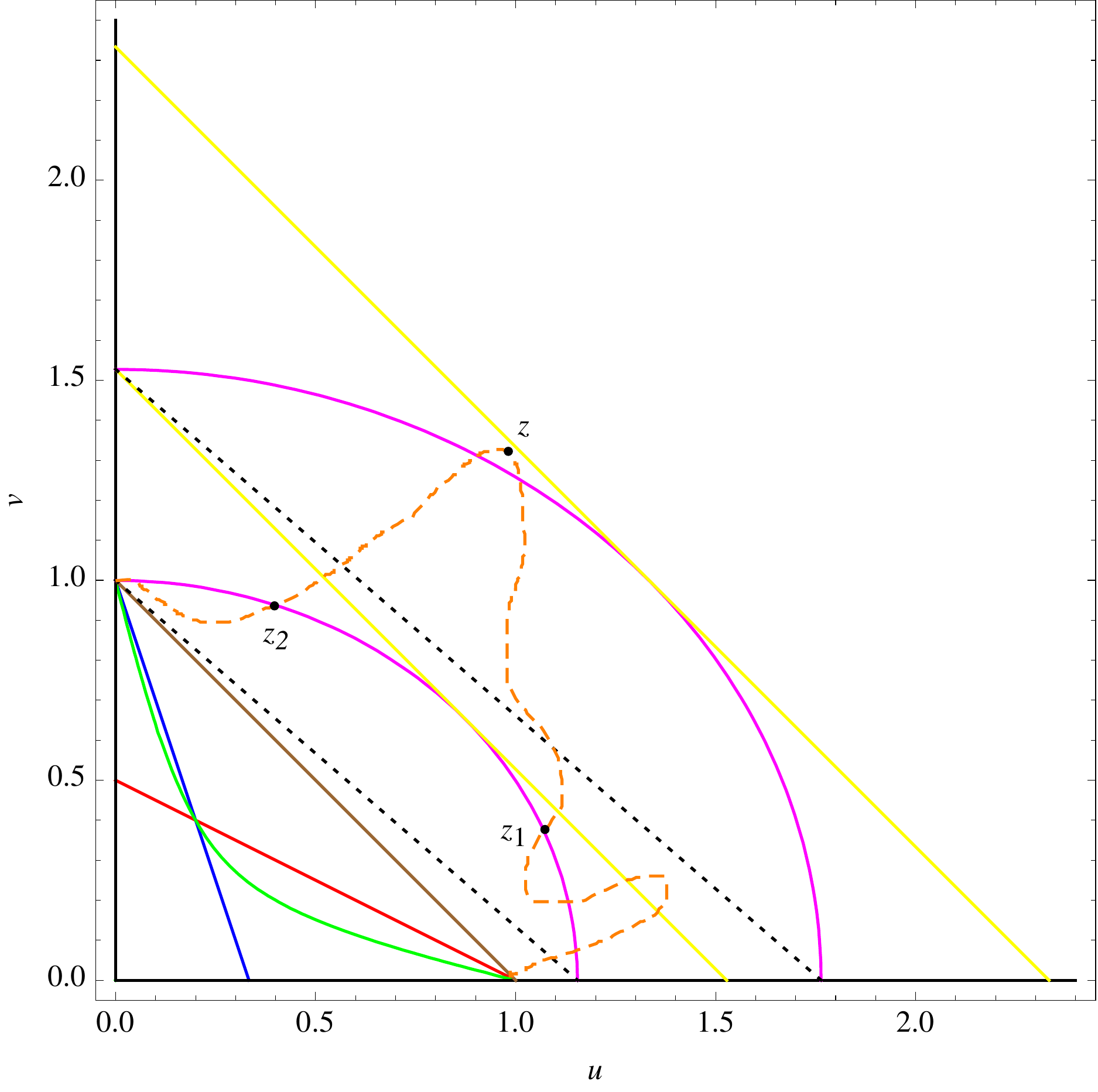}
             \label{fig: upper bound d2/beta-beta d2 v2}    } \quad \hspace{0mm}
             }
\mbox{
\subfigure[$\displaystyle\frac{d_1}{\alpha}<\frac{d_2}{\beta}$, $\alpha\,d_1\,\bar{u}^2>\beta\,d_2\,\bar{v}^2$: $d_2=4$, $\beta=\displaystyle\frac{1}{2}$, $\lambda_1=3$, $\lambda_2=11$, $\eta_1=\displaystyle\frac{1}{2}\,\sqrt{\frac{11}{2}}$, $\eta_2=\displaystyle\frac{11}{2\,\sqrt{6}}$.]{\includegraphics[width=0.48\textwidth]{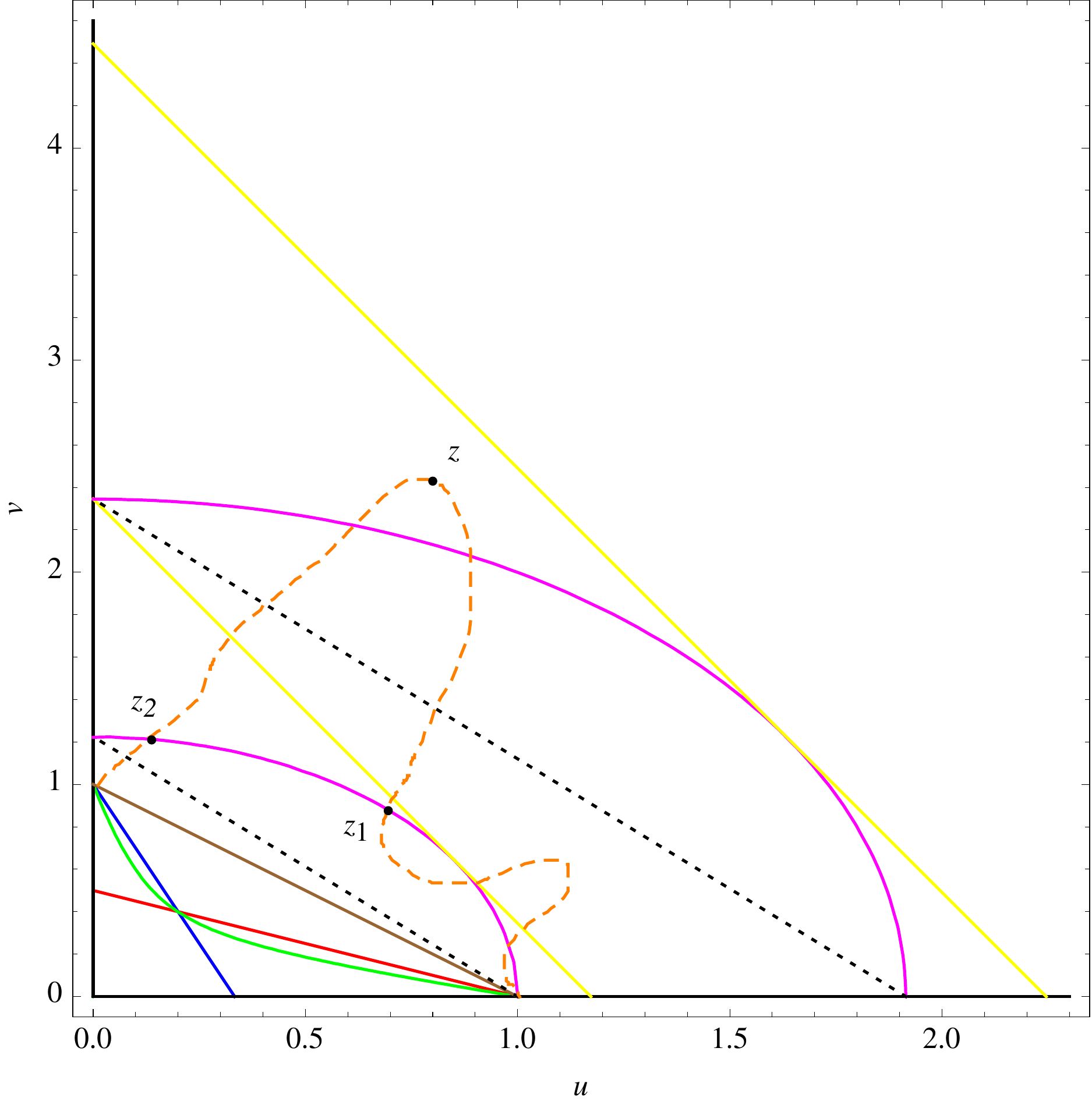}
             \label{fig: upper bound d2/beta-alpha d1 u2}    } \quad \hspace{0mm}
\subfigure[$\displaystyle\frac{d_1}{\alpha}>\frac{d_2}{\beta}$, $\alpha\,d_1\,\bar{u}^2>\beta\,d_2\,\bar{v}^2$: $d_2=2$, $\beta=\displaystyle\frac{3}{4}$, $\lambda_1=3$, $\lambda_2=\displaystyle\frac{51}{8}$, $\eta_1=\displaystyle\frac{1}{2}\,\sqrt{\frac{17}{2}}$, $\eta_2=\displaystyle\frac{17}{8}$.]{\includegraphics[width=0.48\textwidth]{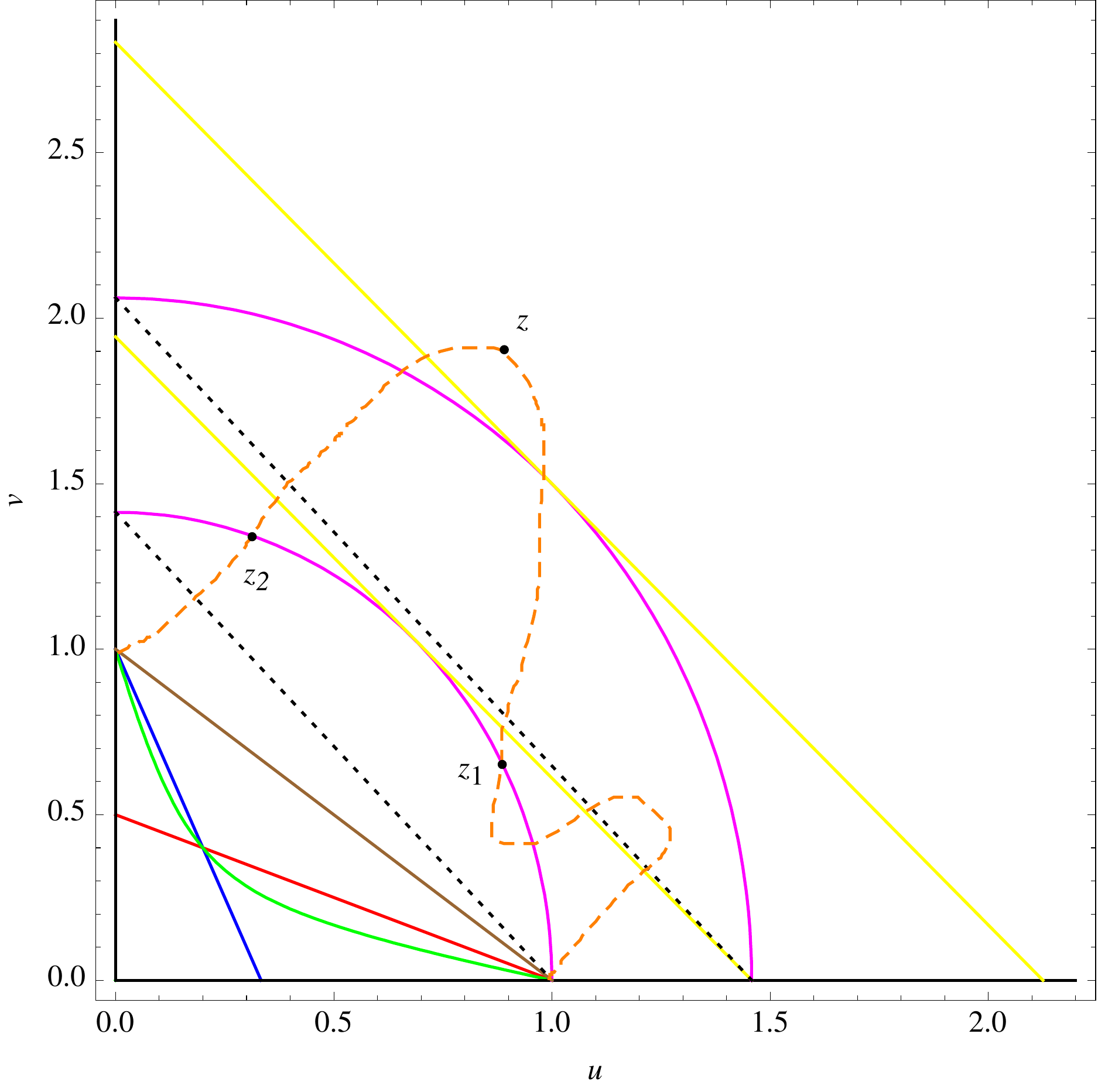}
             \label{fig: upper bound d1/alpha-alpha d1 u2}    } \quad \hspace{0mm}
             }
\caption{\small Red line: $1-u-a_1\,v=0$; blue line: $1-a_2\,u-v=0$; green curve: $F(u,v)=0$; brown line: $\displaystyle\frac{u}{\overline{u}}+\frac{v}{\overline{v}}=1$, where $\overline{u}$ and $\overline{v}$ are given by \eqref{eqn: ubar rem} and \eqref{eqn: vbar rem}; magenta ellipses : $\alpha\,d_1\,u^2+\beta\,d_2\,v^2=\lambda_1,\lambda_2$, where $\lambda_1$ (below) is given by \eqref{eqn: lambda1 upper bound rem} and $\lambda_2$ (above) by \eqref{eqn: lambda2 upper bound rem}; yellow lines: $\alpha\,u+\beta\,v=\eta_1,\eta_2$, where $\eta_1$ (below) is given by \eqref{eqn: eta1 upper bound rem} and $\eta_2$ (above) by \eqref{eqn: eta2 upper bound rem}; dashed orange curve: the solution $(u(x),v(x))$; dotted lines: $\displaystyle\sqrt{\alpha\,d_1}\,u+\sqrt{\beta\,d_2}\,v=\sqrt{\lambda_1}$ (below), $\displaystyle\sqrt{\lambda_2}$ (above); $\overline{u}=\overline{v}=1$; $d_1=3$, $a_1=2$, $a_2=3$, $\alpha=1$.
\label{fig: N-barrier upper bound}}
\end{figure}

\end{remark}

We are now in the position to prove Theorem~\ref{thm: NBMP for n species}.

\begin{proof}[Proof of Theorem~\ref{thm: NBMP for n species}]
In Propositions~\ref{prop: lower bed} and \ref{prop: upper bed}, we obtain a lower and upper bound for $\displaystyle\sum_{i=1}^{n} \alpha_i\,u_i(x)$, respectively. Combining the results in Propositions~\ref{prop: lower bed} and \ref{prop: upper bed}, we immediately establish Theorem~\ref{thm: NBMP for n species}.
\end{proof}

\begin{remark}
\textit{The tanh method} \cite{hung2012JJIAM,CHMU,Rodrigo&Mimura00,Rodrigo&Mimura01} allows us to find exact solutions to \textbf{(BVP)} with certain class of the nonlinearity. For instance, when $m=n=2$, \textbf{(BVP)} with Zeldovich-type reaction terms (\cite{Zeldovich-51-flame-propagation,Gilding-Kersner-TW-04-diffusion-convection reaction,Barenblatt&Zeldovich80MathThy-Combustion-Explosion}) becomes
\begin{equation}\label{eqn:  2 species exact TWS example}
\begin{cases}
\vspace{3mm} 
d_1(u^2)_{xx}+\theta\,u_x+u^2\,(\sigma_1-c_{11}\,u-c_{12}\,v)=0,\ \ &x\in\mathbb{R},\\
\vspace{3mm} 
d_2(v^2)_{xx}+\theta\,v_x+v^2\,(\sigma_2-c_{21}\,u-c_{22}\,v)=0,\ \ &x\in\mathbb{R},\\
(u,v)(-\infty)=\Big(\displaystyle\frac{\sigma_1}{c_{11}},0\Big),\ \ (u,v)(+\infty)= \Big(0,\displaystyle\frac{\sigma_2}{c_{22}}\Big).
\end{cases}
\end{equation}
Applying Theorem~\ref{thm: exact TWS appendix} (see Appendix~\ref{subsec: exact solutions using Tanh method}), we see that when $c_{11}=1$, $c_{22}=2$, $d_1=3$, and $d_2=4$, \eqref{eqn: soln of algebraic eqn tanh method} gives $\theta=0$, $k_1=60$, $k_2=8$, $\sigma_1=240$, $\sigma_2=32$, $c_{12}=27$, and $c_{21}=\displaystyle\frac{2}{5}$, and hence \eqref{eqn:  2 species exact TWS example} admits the solution (see Figure~\ref{fig: exact TW Zeldovich-type})
\begin{equation}
\begin{cases}
\vspace{3mm}
u(x) = 60\,\big(1-\tanh x\big)^2, \ \ &x\in\mathbb{R},\\
v(x) = 8\,\big(1+\tanh x\big), \ \ &x\in\mathbb{R}.
\end{cases}
\end{equation}
Letting $\alpha=\displaystyle\frac{1}{2}$ and $\beta=\displaystyle\frac{1}{3}$, it follows immediately that $\displaystyle\alpha\,u(x)+\beta\,v(x)=30\,\tanh^2 x-\frac{172}{3}\,\tanh x+\frac{98}{3}$ is monotonically decreasing in $x$. As a result, 
\begin{equation}
\displaystyle\frac{16}{3}=\alpha\,u(\infty)+\beta\,v(\infty)\le\alpha\,u(x)+\beta\,v(x)\le\alpha\,u(-\infty)+\beta\,v(-\infty)=120, \ \ x\in\mathbb{R}.
\end{equation}
On the other hand, upper and lower bounds given by Corollary~\ref{cor: NBMP for NDC-tw} turn out to be 
\begin{equation}
1.70\approx\frac{80}{\sqrt{2211}}=\underaccent\bar{\lambda}\le\alpha\,u(x)+\beta\,v(x)\le\bar{\lambda}=180\,\sqrt{2}\approx254.56,  \ \ x\in\mathbb{R}.
\end{equation}
Thus, we verify Corollary~\ref{cor: NBMP for NDC-tw} in this case.
 $\underaccent\bar{u}=80$, $\underaccent\bar{v}=\displaystyle\frac{80}{9}$, $\bar{u}=240$, $\bar{v}=16$,

\begin{figure}[ht!]
\centering
\mbox{
{\includegraphics[width=0.50\textwidth]{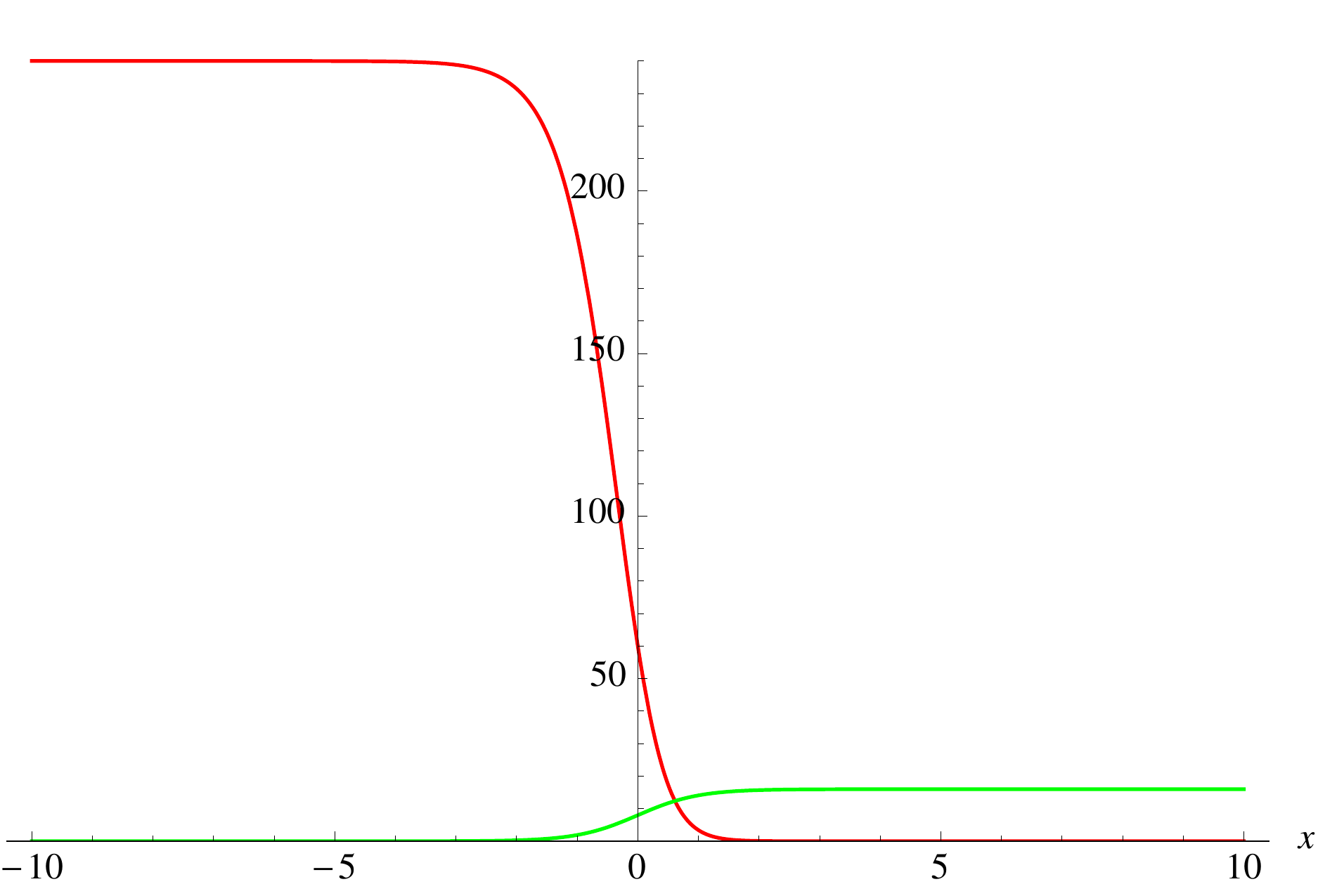}
             } \quad \hspace{0mm}
             }
\caption{\small Red: $u(x) =60\,\big(1-\tanh x\big)^2$; green: $v(x) =8\,\big(1+\tanh x\big)$. 
\label{fig: exact TW Zeldovich-type}}
\end{figure}

\end{remark}

\section{Application to the nonexistence of three species traveling waves: proof of Theorem~\ref{thm: Nonexistence 3 species} }\label{sec: nonexistence}


In this section, we prove Theorem~\ref{thm: Nonexistence 3 species} by contradiction.
\begin{proof}[Proof of Theorem~\ref{thm: Nonexistence 3 species}]

We first prove \textbf{\textit{(i)}}. Suppose to the contrary that there exists a solution $(u(x),v(x),w(x))$ to the problem \eqref{eqn: 3 species TWS nonexistence}. Due to $\mathbf{[H1]}$, we have $w_x(x_0)=0$ and $w_{xx}(x_0)\le 0$. Since $w(x)$ satisfies
\begin{equation}
d_3\,(w^2)_{xx}+\theta\,w_x+w\,(\sigma_3-c_{31}\,u-c_{32}\,v-c_{33}\,w)=0
\end{equation}
and $(w^2)_{xx}=2\,(w_x^2+w\,w_{xx})$, we obtain
\begin{equation}\label{eqn: w(???) ???>0}
\sigma_3-c_{31}\,u(x_0)-c_{32}\,v(x_0)-c_{33}\,w(x_0)\ge 0.
\end{equation}
This lead to an upper bound of $w(x)$, i.e.
\begin{equation}
w(x)\leq w(x_0)\le \displaystyle\frac{1}{c_{33}}\big(\sigma_3-c_{31}\,u(x_0)-c_{32}\,v(x_0)\big)<\displaystyle\frac{\sigma_3}{c_{33}},\ \ x\in\mathbb{R}.
\end{equation}
By virtue of the inequality $w(x)<\displaystyle\frac{\sigma_3}{c_{33}}$, the last two equations in \eqref{eqn: 3 species TWS nonexistence} become
\begin{equation}\label{eqn: nonexistence diff ineq <0}
\begin{cases}
\vspace{3mm} 
d_1\,(u^2)_{xx}+\theta \,u_x+u\,(\sigma_1-c_{13}\,\sigma_3\,c_{33}^{-1}-c_{11}\,u-c_{12}\,v)\leq0,\ \ &x\in\mathbb{R},\\
d_2\,(v^2)_{xx}+\theta \,v_x+v\,(\sigma_2-c_{23}\,\sigma_3\,c_{33}^{-1}-c_{21}\,u-c_{22}\,v)\leq0, \ \ &x\in\mathbb{R}.
\end{cases}
\end{equation}
By means of $\mathbf{[H0]}$ and $\mathbf{[H2]}$, we can employ Corollary~\ref{cor: NBMP for NDC-tw} with $u_1=u$, $u_2=v$ and $\alpha_1=c_{31}$, $\alpha_2=c_{32}$ to obtain a lower bound of $c_{31}\,u(x)+c_{32}\,v(x)$, i.e.
\begin{equation}
c_{31}\,u(x)+c_{32}\,v(x)\geq d_1\,d_2\,\underaccent\bar{u}_{\ast}\,\underaccent\bar{v}_{\ast}\,\displaystyle\min \Big(\frac{c_{31}}{d_1},\frac{c_{32}}{d_2}\Big)\,\displaystyle\sqrt{\frac{\displaystyle c_{31}\,c_{32}}{(\displaystyle c_{31}\,d_1\,\underaccent\bar{u}_{\ast}^2+c_{32}\,d_2\,\underaccent\bar{v}_{\ast}^2)\,(c_{31}\,d_2+c_{32}\,d_1)}},\ \ x\in\mathbb{R}.
\end{equation}
However, $\mathbf{[H3]}$ yields
\begin{equation}
c_{31}\,u(x)+c_{32}\,v(x)\geq\lambda_{\ast}\geq\sigma_3,\;x\in\mathbb{R},
\end{equation}
which contradicts \eqref{eqn: w(???) ???>0}. This completes the proof of \textbf{\textit{(i)}}. To prove \textit{\textbf{(ii)}}, an easy observation leads to 
\begin{equation}
\begin{cases}
\vspace{3mm} 
d_1\,(u^2)_{xx}+\theta \,u_x+u\,(\sigma_1-c_{11}\,u-c_{12}\,v)>0,\ \ &x\in\mathbb{R},\\
d_2\,(v^2)_{xx}+\theta \,v_x+v\,(\sigma_2-c_{21}\,u-c_{22}\,v)>0, \ \ &x\in\mathbb{R},
\end{cases}
\end{equation}
since $w(x)>0$, $x\in\mathbb{R}$. Letting $u_1=u$, $u_2=v$ and $\alpha_1=c_{31}$, $\alpha_2=c_{32}$, an upper bound of $c_{31}\,u(x)+c_{32}\,v(x)$ given by Corollary~\ref{cor: NBMP for NDC-tw} is
\begin{equation}
c_{31}\,u(x)+c_{32}\,v(x)\leq \displaystyle\bigg(
\frac{c_{31}}{d_1}+\frac{c_{32}}{d_2}
\bigg) 
\sqrt{
\max
\bigg(
\frac{d_1}{c_{31}},\frac{d_2}{c_{32}}
\bigg)
\max
\Big(
c_{31}\,d_1\,{\bar{u}^{\ast}}^2,c_{32}\,d_2\,{\bar{v}^{\ast}}^2
\Big)
}:=\lambda^{\ast},\ \ x\in\mathbb{R},
\end{equation}
where ${\bar{u}^{\ast}}$ and ${\bar{v}^{\ast}}$ are defined in $\mathbf{[H5]}$. It follows from the last inequality that
\begin{align}
\label{}
0&= d_3\,(w^2)_{xx}+\theta\,w_x+w\,(\sigma_3-c_{31}\,u-c_{32}\,v-c_{33}\,w)   \\ \notag
  &\ge d_3\,(w^2)_{xx}+\theta\,w_x+w\,(\sigma_3-\lambda^{\ast}-c_{33}\,w).
\end{align}
On the other hand, $\mathbf{[H4]}$ leads to the fact that $w_x(x_0)=0$ and $w_{xx}(x_0)\ge 0$, and hence
\begin{equation}
\sigma_3-\lambda^{\ast}-c_{33}\,w(x_0)\le 0.
\end{equation}
or
\begin{equation}
w(x)\geq w(x_0)\ge \displaystyle\frac{1}{c_{33}}\big(\sigma_3-\lambda^{\ast}\big),\ \ x\in\mathbb{R}.
\end{equation}
However, this is a contradiction with $\mathbf{[H6]}$. We complete the proof of \textbf{\textit{(ii)}}.

\end{proof}


\section{Concluding Remarks}\label{sec: Concluding Remarks}

In this paper, we have shown the NBMP for \textbf{(BVP)} with $m>1$, and apply it the establish the nonexistence of three species waves in \eqref{eqn: 3 species TWS nonexistence} under certain conditions. In particular, the upper and lower bounds given by the NBMP are verified by using exact solutions.  
 
The N-barrier method is still under investigation, and there is a number of open problems concerning NBMP. We point out some of them for further study:
\begin{itemize}
  \item \textit{NBMP for periodic solutions}:  As we can see from \cite{Guedda-Kersner-Klincsik-Logak-14-Exact-fronts-periodic}, \textbf{(NDC)} admits periodic stationary solutions under certain conditions on the parameters. Motivated by this work, we show in Theorem~\ref{thm: exact TWS appendix Cosine soln} (see Section~\ref{subsec: exact solutions of SKT}) that for the three-specie case \eqref{eqn: 3 species TWS nonexistence} also admits periodic solutions under certain conditions on the parameters. The question is how to correct the N-barrier method adapted for periodic solutions?  
  \item \textit{NBMP for multi-dimensional equations}: The N-barrier method has not yet been applied to multi-dimensional equations since there is still a lack of systematic formulation of the method in the multi-dimensional case. The difficulty is to construct appropriate N-barriers corresponding to operator like $\Delta u$, $\nabla u$, $\Delta (u^2)$ etc..  
  \item \textit{NBMP for strongly-coupled equations}: The N-barrier method developed to study \eqref{eqn: degenerate autonomous system of n species} can also be applied to a wide class of elliptic systems, for instance, the system \textbf{(SKT-tw)} in which diffusion, self-diffusion, and cross-diffusion are strongly coupled. 
\end{itemize}
These are left as the future work.

\section{Appendix}\label{sec: appendix}

\subsection{Algebraic solutions}\label{subsec: algebraic solutions}

\begin{lemma}\label{lem: lambda1}
For $\Theta$, $\Lambda>0$, if 
\begin{eqnarray}
\label{eqn: parallel}
\alpha_i\,d_i\,\underaccent\bar{u}_i\,u_i^{m-1} & = & \alpha_j\,d_j\,\underaccent\bar{u}_j\,u_j^{m-1},\ \ i,j= 1,2,\cdots,n;\\ 
\label{eqn: line}
\sum_{i=1}^{n}\frac{\displaystyle u_i}{\displaystyle\underaccent\bar{u}_i} & = & \Theta;\\
\label{eqn: ellipse}
\sum_{i=1}^{n} \alpha_i\,d_i\,u_i^m & = & \Lambda,
\end{eqnarray}
we have 
\begin{equation}
\Lambda=\Theta^m\Bigg(\sum_{i=1}^{n}\frac{1}{\sqrt[m-1]{\alpha_i\,d_i\,\underaccent\bar{u}_i^m}}\Bigg)^{1-m}.
\end{equation}
\end{lemma}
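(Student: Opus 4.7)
The plan is to introduce the common value $K$ coming from the ``parallel'' condition \eqref{eqn: parallel}, solve for each $u_i$ in terms of $K$, substitute into the ``line'' equation \eqref{eqn: line} to express $K$ in terms of $\Theta$, and finally exploit a neat factorization that turns the ``ellipse'' sum \eqref{eqn: ellipse} into $K\Theta$.

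More precisely, equation \eqref{eqn: parallel} says that the quantity $K := \alpha_i\,d_i\,\underaccent\bar{u}_i\,u_i^{m-1}$ is independent of $i$. Solving for $u_i$ gives
\begin{equation*}
u_i = \frac{K^{1/(m-1)}}{\bigl(\alpha_i\,d_i\,\underaccent\bar{u}_i\bigr)^{1/(m-1)}}, \qquad i=1,2,\dots,n,
\end{equation*}
and therefore
\begin{equation*}
\frac{u_i}{\underaccent\bar{u}_i} = \frac{K^{1/(m-1)}}{\underaccent\bar{u}_i\,\bigl(\alpha_i\,d_i\,\underaccent\bar{u}_i\bigr)^{1/(m-1)}} = \frac{K^{1/(m-1)}}{\bigl(\alpha_i\,d_i\,\underaccent\bar{u}_i^{m}\bigr)^{1/(m-1)}},
\end{equation*}
where I absorbed $\underaccent\bar{u}_i$ into the $(m-1)$-th root. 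Summing and invoking \eqref{eqn: line} yields
\begin{equation*}
\Theta \;=\; \sum_{i=1}^{n}\frac{u_i}{\underaccent\bar{u}_i} \;=\; K^{1/(m-1)}\sum_{i=1}^{n}\frac{1}{\sqrt[m-1]{\alpha_i\,d_i\,\underaccent\bar{u}_i^{m}}},
\end{equation*}
which solves for $K$ in terms of $\Theta$.

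The concluding step uses the factorization $\alpha_i\,d_i\,u_i^{m} = \bigl(\alpha_i\,d_i\,\underaccent\bar{u}_i\,u_i^{m-1}\bigr)\cdot\bigl(u_i/\underaccent\bar{u}_i\bigr) = K\,u_i/\underaccent\bar{u}_i$. Summing over $i$ and using \eqref{eqn: line} once more gives
\begin{equation*}
\Lambda \;=\; \sum_{i=1}^{n}\alpha_i\,d_i\,u_i^{m} \;=\; K\sum_{i=1}^{n}\frac{u_i}{\underaccent\bar{u}_i} \;=\; K\,\Theta.
\end{equation*}
Substituting $K = \Theta^{m-1}\bigl(\sum_{i=1}^{n}(\alpha_i\,d_i\,\underaccent\bar{u}_i^{m})^{-1/(m-1)}\bigr)^{1-m}$ from the previous display produces precisely the claimed formula for $\Lambda$.

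There is no real obstacle here: the lemma is a purely algebraic identity and the proof is a short calculation. The only step that requires mild care is the bookkeeping of fractional exponents, in particular noticing that $\underaccent\bar{u}_i\cdot(\alpha_i d_i \underaccent\bar{u}_i)^{1/(m-1)} = (\alpha_i d_i \underaccent\bar{u}_i^{m})^{1/(m-1)}$, which is what lets the answer be written as a single clean $(m-1)$-th root, and the recognition of the telescoping identity $\alpha_i d_i u_i^{m} = K\cdot(u_i/\underaccent\bar{u}_i)$ which instantly collapses $\Lambda$ to $K\Theta$.
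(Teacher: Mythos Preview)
Your proof is correct and follows the same overall strategy as the paper's: introduce the common value guaranteed by \eqref{eqn: parallel}, solve for each $u_i$, use \eqref{eqn: line} to pin down the constant, and then evaluate \eqref{eqn: ellipse}. The one noteworthy difference is your factorization $\alpha_i d_i u_i^{m} = (\alpha_i d_i \underaccent\bar{u}_i u_i^{m-1})\cdot(u_i/\underaccent\bar{u}_i) = K\,u_i/\underaccent\bar{u}_i$, which collapses $\Lambda$ to $K\Theta$ in one line. The paper instead substitutes the explicit formula for $u_i$ into $\sum_i \alpha_i d_i u_i^{m}$ and simplifies through several algebraic steps to reach the same endpoint. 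Your shortcut is cleaner and avoids that bookkeeping, but the underlying idea is identical.
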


\begin{proof}
Due to \eqref{eqn: parallel}, we may assume 
\begin{equation}
u_i=\sqrt[m-1]{\frac{\displaystyle\prod_{j=1}^{n} \alpha_j\,d_j\,\underaccent\bar{u}_j}{\alpha_i\,d_i\,\underaccent\bar{u}_i}}\,K, \ \ i= 1,2,\cdots,n
\end{equation}
for some $K>0$. 
It follows immediately from \eqref{eqn: line} that $K$ is determined by
\begin{equation}
K=\frac{\Theta}{\displaystyle\sum_{i=1}^{n}\Bigg(\frac{\displaystyle1}{\displaystyle\underaccent\bar{u}_i}\sqrt[m-1]{\frac{\prod_{j=1}^{n} \alpha_j\,d_j\,\underaccent\bar{u}_j}{\alpha_i\,d_i\,\underaccent\bar{u}_i}}\Bigg)}, 
\end{equation}
and hence
\begin{align}
\label{}
u_i&=   \sqrt[m-1]{\frac{\displaystyle\prod_{j=1}^{n} \alpha_j\,d_j\,\underaccent\bar{u}_j}{\alpha_i\,d_i\,\underaccent\bar{u}_i}}\,\frac{\Theta}{\displaystyle\sum_{i=1}^{n}\Bigg(\frac{\displaystyle1}{\displaystyle\underaccent\bar{u}_i}\,\sqrt[m-1]{\frac{\prod_{j=1}^{n} \alpha_j\,d_j\,\underaccent\bar{u}_j}{\alpha_i\,d_i\,\underaccent\bar{u}_i}}\Bigg)} \\\notag
    &=  \displaystyle\frac{1}{\sqrt[m-1]{\alpha_i\,d_i\,\underaccent\bar{u}_i}}\frac{\Theta}{\displaystyle\sum_{i=1}^{n}\bigg(\frac{\displaystyle1}{\displaystyle\underaccent\bar{u}_i}\frac{1}{\sqrt[m-1]{\alpha_i\,d_i\,\underaccent\bar{u}_i}}\bigg)}\\\notag
    &=  \frac{\displaystyle \Theta}{\displaystyle\sum_{i=1}^{n}\bigg(\frac{1}{\sqrt[m-1]{\alpha_i\,d_i\,\underaccent\bar{u}_i^m}}\bigg)}\frac{1}{\sqrt[m-1]{\alpha_i\,d_i\,\underaccent\bar{u}_i}}.
\end{align}
Therefore, $\Lambda$ is given by
\begin{align*}
 \Lambda &= \sum_{i=1}^{n} \alpha_i\,d_i\,u_i^m
 =\sum_{i=1}^{n} \alpha_i\,d_i\,
    \vast(  \frac{\displaystyle \Theta}{\displaystyle\sum_{i=1}^{n}\bigg(\frac{1}{\sqrt[m-1]{\alpha_i\,d_i\,\underaccent\bar{u}_i^m}}\bigg)}\frac{1}{\sqrt[m-1]{\alpha_i\,d_i\,\underaccent\bar{u}_i}}  \vast)^m \\ \notag
    &=\Theta^m\Bigg(\sum_{i=1}^{n}\frac{1}{\sqrt[m-1]{\alpha_i\,d_i\,\underaccent\bar{u}_i^m}}\Bigg)^{-m}
    \Bigg( \sum_{i=1}^{n} \frac{\alpha_i\,d_i}{\sqrt[m-1]{(\alpha_i\,d_i\,\underaccent\bar{u}_i)^m}}\Bigg)\\ \notag
    &=\Theta^m\Bigg(\sum_{i=1}^{n}\frac{1}{\sqrt[m-1]{\alpha_i\,d_i\,\underaccent\bar{u}_i^m}}\Bigg)^{-m}
    \Bigg( \sum_{i=1}^{n} \frac{\sqrt[m-1]{(\alpha_i\,d_i)^{m-1}}}{\sqrt[m-1]{(\alpha_i\,d_i)^{m}(\underaccent\bar{u}_i)^m}}\Bigg)\\    \notag
    &=\Theta^m\Bigg(\sum_{i=1}^{n}\frac{1}{\sqrt[m-1]{\alpha_i\,d_i\,\underaccent\bar{u}_i^m}}\Bigg)^{-m}
    \Bigg( \sum_{i=1}^{n} \frac{1}{\sqrt[m-1]{\alpha_i\,d_i\,\underaccent\bar{u}_i^m}}\Bigg)=\Theta^m\Bigg(\sum_{i=1}^{n}\frac{1}{\sqrt[m-1]{\alpha_i\,d_i\,\underaccent\bar{u}_i^m}}\Bigg)^{1-m}.\\  \notag           
\end{align*}

\end{proof}

\begin{lemma}\label{lem: lambda2}
For $\Theta$, $\Lambda>0$, if 
\begin{eqnarray}
\label{eqn: parallel 2}
d_i\,u_i^{m-1} & = & d_j\,u_j^{m-1} ,\ \ i,j= 1,2,\cdots,n;\\ 
\label{eqn: line 2}
\sum_{i=1}^{n} \alpha_i\,u_i & = & \Theta;\\
\label{eqn: ellipse 2}
\sum_{i=1}^{n} \alpha_i\,d_i\,u_i^m & = & \Lambda,
\end{eqnarray}
we have 
\begin{equation}
\Lambda=\Theta^m\Bigg(\sum_{i=1}^{n}\frac{\alpha_i}{\sqrt[m-1]{d_i}}\Bigg)^{1-m}.
\end{equation}
\end{lemma}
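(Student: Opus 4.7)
The plan is to mirror the argument used in Lemma~\ref{lem: lambda1}: the proportionality relation \eqref{eqn: parallel 2} collapses the $n$ unknowns $u_1,\ldots,u_n$ onto a one-parameter family, and then the linear constraint \eqref{eqn: line 2} pins down that parameter in terms of $\Theta$, after which $\Lambda$ is obtained by direct substitution into \eqref{eqn: ellipse 2}.

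First, I would exploit \eqref{eqn: parallel 2}: since $d_i\,u_i^{m-1}$ is independent of $i$, there is a constant $K>0$ with
\begin{equation*}
u_i^{m-1}=\frac{K^{m-1}}{d_i},\qquad\text{i.e.}\qquad u_i=\frac{K}{\sqrt[m-1]{d_i}},\quad i=1,2,\ldots,n.
\end{equation*}
Substituting this into the linear constraint \eqref{eqn: line 2} gives
\begin{equation*}
\Theta=\sum_{i=1}^{n}\alpha_i\,u_i = K\sum_{i=1}^{n}\frac{\alpha_i}{\sqrt[m-1]{d_i}},
\end{equation*}
so $K$ is determined uniquely by
\begin{equation*}
K=\Theta\Bigg(\sum_{i=1}^{n}\frac{\alpha_i}{\sqrt[m-1]{d_i}}\Bigg)^{-1}.
\end{equation*}

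Next, I would insert $u_i=K/\sqrt[m-1]{d_i}$ into \eqref{eqn: ellipse 2}. The key observation is the exponent identity
\begin{equation*}
\alpha_i\,d_i\,u_i^m=\alpha_i\,d_i\cdot\frac{K^m}{d_i^{\,m/(m-1)}}=K^m\,\frac{\alpha_i}{d_i^{\,1/(m-1)}}=K^m\,\frac{\alpha_i}{\sqrt[m-1]{d_i}},
\end{equation*}
which uses $1-\tfrac{m}{m-1}=-\tfrac{1}{m-1}$. Summing over $i$ and plugging in the expression for $K$ then yields
\begin{equation*}
\Lambda=K^m\sum_{i=1}^{n}\frac{\alpha_i}{\sqrt[m-1]{d_i}}=\Theta^m\Bigg(\sum_{i=1}^{n}\frac{\alpha_i}{\sqrt[m-1]{d_i}}\Bigg)^{-m}\Bigg(\sum_{i=1}^{n}\frac{\alpha_i}{\sqrt[m-1]{d_i}}\Bigg)=\Theta^m\Bigg(\sum_{i=1}^{n}\frac{\alpha_i}{\sqrt[m-1]{d_i}}\Bigg)^{1-m},
\end{equation*}
which is the desired identity.

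There is essentially no obstacle here: the argument is a finite algebraic manipulation, and the only place to slip is the bookkeeping of the fractional exponent $-1/(m-1)$ that arises when simplifying $d_i\,u_i^m$. The structural difference from Lemma~\ref{lem: lambda1} is merely that the roles of the linear and ellipsoidal sides have slightly different weightings (the factor $\underaccent\bar{u}_i$ is replaced by $\alpha_i$ distributed differently), so the same one-parameter reduction works verbatim with the weights $\alpha_i/\sqrt[m-1]{d_i}$ playing the part that $1/\sqrt[m-1]{\alpha_i d_i \underaccent\bar{u}_i^m}$ played before.
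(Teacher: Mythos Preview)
Your proof is correct. The paper's own proof is even shorter: it simply observes that Lemma~\ref{lem: lambda2} is the specialization of Lemma~\ref{lem: lambda1} obtained by setting $\underaccent\bar{u}_i=1/\alpha_i$ (which turns \eqref{eqn: parallel}--\eqref{eqn: ellipse} into \eqref{eqn: parallel 2}--\eqref{eqn: ellipse 2} and the conclusion of Lemma~\ref{lem: lambda1} into the desired formula). You instead redo the one-parameter reduction from scratch; this is the same computation that underlies Lemma~\ref{lem: lambda1}, so nothing is lost, but the paper's substitution shortcut saves the bookkeeping you carry out explicitly.
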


\begin{proof}
Lemma~\ref{lem: lambda2} follows from letting $\displaystyle\underaccent\bar{u}_i=\frac{1}{\alpha_i}$ in Lemma~\ref{lem: lambda1}.

\end{proof}

\subsection{Exact solutions using Tanh method}\label{subsec: exact solutions using Tanh method}

Enlightened by the works of \cite{hung2012JJIAM,CHMU,Rodrigo&Mimura00,Rodrigo&Mimura01}, our idea is to look for a monotone solution with a hyperbolic tangent profile. We make the following ans\"atz for solving \eqref{eqn:  2 species exact TWS example}:
\begin{equation}\label{eqn: ansatz for solns}
\begin{cases}
\vspace{3mm}
u(x) = k_1\,\big(1-\tanh x\big)^2, \ \ &x\in\mathbb{R},\\
v(x) = k_2\,\big(1+\tanh x\big), \ \ &x\in\mathbb{R},\\
\end{cases}
\end{equation}
where $k_1$ and $k_2$ are positive constants to be determined. Since the derivative of $\tanh x$ is expressible in terms of itself, i.e. $\displaystyle\frac{d}{dx}\tanh x=1-\tanh^2 x$, we see that the $n$th derivative of a polynomial in $\tanh x$ with any order is also a a polynomial in $\tanh x$. Inserting  ans\"{a}tz \eqref{eqn: ansatz for solns} into \eqref{eqn:  2 species exact TWS example}, this fact enables us to get
\begin{eqnarray}\nonumber
d_1(u^2)_{xx}+\theta\,u_x+u\,(\sigma_1-c_{11}\,u-c_{12}\,v) & = & 
u\,\big(\zeta_0+\zeta_1\,T(x)+\zeta_2\,T^2(x)+\zeta_3\,T^3(x)\big), \\
d_2(v^2)_{xx}+\theta\,v_x+v\,(\sigma_2-c_{21}\,u-c_{22}\,v) & = & 
v\,\big(\xi_0+\xi_1\,T(x)+\xi_2\,T^2(x)+\xi_3\,T^3(x)\big),\notag
\end{eqnarray}
where $T(x):=\tanh x$,
\begin{eqnarray}
\label{}
  \zeta_0&=&-c_{11} \,k_1^2-c_{12}\, k_1\,k_2+12\,d_1\,k_1-2\,\theta +\sigma_1\,k_1,   \\
  \zeta_1&=&4\,c_{11}\,k_1^2+c_{12}\,k_1\,k_2+8\,d_1\,k_1-2\,\theta-2\,\sigma_1\,k_1,   \\
  \zeta_2&=&-6\,c_{11}\,k_1^2+c_{12}\,k_1\,k_2-32\,d_1\,k_1+\sigma_1\,k_1,  \\
  \zeta_3&=&4\,c_{11}\,k_1^2-c_{12}\,k_1\,k_2-8\,d_1\,k_1,   \\
  \zeta_4&=&20\,d_1\,k_1-c_{11}\,k_1^2,    
\end{eqnarray}
and
\begin{eqnarray}
\label{}
  \xi_0&=&-c_{22}\,k_2^2-c_{21}\,k_1\,k_2+2\,d_2\,k_2+\theta+\sigma_2\,k_2,   \\
  \xi_1&=&-2\,c_{22}\,k_2^2+c_{21}\,k_1\,k_2-6\,d_2\,k_2-\theta+\sigma_2\,k_2,   \\
  \xi_2&=&-c_{22}\,k_2^2+c_{21}\,k_1\,k_2-2\,d_2\,k_2,  \\
  \xi_3&=&6\,d_2\,k_2-c_{21}\,k_1\,k_2.   
\end{eqnarray}
Equating the coefficients of powers of $T(x)$ to zero yields a system of 9 equations:
\begin{alignat}{3}\label{eqn: algebraic eqn tanh method}
\zeta_i & =0 \ \ (i=0,1,2,3,4), & \qquad 
\xi_i & =0 \ \ (i=0,1,2,3).
\end{alignat}
It turns out that, with $d_1$, $d_2$, $c_{11}$, and $c_{22}$ being free parameters, \eqref{eqn: algebraic eqn tanh method} can be solved to give
\begin{alignat}{4}\label{eqn: soln of algebraic eqn tanh method}
k_1 & =\frac{20\,d_1}{c_{11}}, & \qquad 
\sigma_1 & =80\,d_1, & \qquad 
c_{12} & =\frac{18\,c_{22}\,d_1}{d_2}, & \qquad
\theta & =0
,\\
k_2 & =\frac{4\,d_2}{c_{22}}, & \qquad 
\sigma_2 & =8\,d_2, & \qquad 
c_{21} & =\frac{3\,c_{11}\,d_2}{10\,d_1}. & \qquad
\notag
\end{alignat}
 The result obtained is summarized in the following
\begin{theorem}\label{thm: exact TWS appendix}
System \eqref{eqn:  2 species exact TWS example} has a solution of the
form \eqref{eqn: ansatz for solns} provided that \eqref{eqn: soln of algebraic eqn tanh method} holds.

\end{theorem}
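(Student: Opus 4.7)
The plan is a direct verification: substitute the ansatz \eqref{eqn: ansatz for solns} into \eqref{eqn:  2 species exact TWS example}, reduce each equation to a polynomial identity in $T(x) := \tanh x$, and confirm that the parameter choices in \eqref{eqn: soln of algebraic eqn tanh method} make every coefficient vanish. The underlying reason this works is the algebraic closure property $\tanh'(x) = 1 - T^2$, so differentiating any polynomial in $T$ with respect to $x$ again produces a polynomial in $T$. Consequently, after substitution each of the two ODEs takes the form $u \cdot P_4(T) = 0$ and $v \cdot Q_3(T) = 0$ for a quartic $P_4$ and a cubic $Q_3$ in $T$.

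First I would compute $(u^2)_{xx}$ and $(v^2)_{xx}$ explicitly using $u^2 = k_1^2(1-T)^4$, $v^2 = k_2^2(1+T)^2$ and $\partial_x = (1-T^2)\partial_T$, and similarly expand $u_x$ and $v_x$ as polynomials in $T$. Adding the reaction and drift contributions and factoring out $u$ and $v$ reproduces precisely the coefficients $\zeta_0,\dots,\zeta_4$ and $\xi_0,\dots,\xi_3$ listed in the excerpt. The task is then to verify that \eqref{eqn: soln of algebraic eqn tanh method} simultaneously kills all nine coefficients.

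Next I would substitute \eqref{eqn: soln of algebraic eqn tanh method} in a top-down order. The leading coefficient $\zeta_4 = 20 d_1 k_1 - c_{11} k_1^2$ pins down $k_1 = 20 d_1/c_{11}$, and the leading coefficient $\xi_3 = 6 d_2 k_2 - c_{21} k_1 k_2$ then gives $c_{21} = 6 d_2/k_1 = 3 c_{11} d_2/(10 d_1)$. Substituting into $\zeta_3 = 0$ yields $c_{12} k_1 k_2 = 4 c_{11} k_1^2 - 8 d_1 k_1 = 72 d_1 k_1$, i.e. $c_{12} k_2 = 72 d_1$; coupled with $k_2 = 4 d_2/c_{22}$ (obtained from $\xi_2 = 0$), this gives $c_{12} = 18 c_{22} d_1/d_2$. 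The remaining equations $\zeta_0 = \zeta_1 = \zeta_2 = 0$ and $\xi_0 = \xi_1 = 0$ then become linear in $\theta$, $\sigma_1$ and $\sigma_2$; a quick check (e.g.\ for $\zeta_2$: $-120 d_1 k_1 + 72 d_1 k_1 - 32 d_1 k_1 + 80 d_1 k_1 = 0$) shows that $\theta = 0$, $\sigma_1 = 80 d_1$, $\sigma_2 = 8 d_2$ satisfy every residual equation consistently. Finally, since $T(\pm\infty) = \pm 1$, the ansatz automatically delivers $(u,v)(-\infty) = (4 k_1, 0) = (\sigma_1/c_{11}, 0)$ and $(u,v)(+\infty) = (0, 2 k_2) = (0, \sigma_2/c_{22})$, so the boundary conditions in \eqref{eqn:  2 species exact TWS example} hold.

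The main obstacle is not conceptual but purely bookkeeping. Expanding the second derivative of $u^2 = k_1^2(1-T)^4$ and collecting the five coefficients of $T^0, \dots, T^4$ correctly is error-prone because each $\zeta_i$ receives contributions from the degenerate diffusion, the convection, and the quadratic reaction; a single arithmetic slip when contracting $(1-T^2)\partial_T$ twice on $(1-T)^4$ would invalidate several coefficients at once. Once the $\zeta_i$ and $\xi_i$ are in hand, however, the remaining algebra is routine and the theorem follows.
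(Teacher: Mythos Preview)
Your proposal is correct and follows essentially the same route as the paper: the paper's argument (given in the text preceding the theorem statement) is precisely the direct tanh substitution you describe, producing the coefficients $\zeta_0,\dots,\zeta_4$, $\xi_0,\dots,\xi_3$ and then solving the resulting nine algebraic equations to obtain \eqref{eqn: soln of algebraic eqn tanh method}. Your write-up is in fact slightly more thorough, since you spell out the order in which the coefficients pin down the parameters and you verify the boundary conditions at $x=\pm\infty$, which the paper leaves implicit.
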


\subsection{Exact solutions of \textbf{(SKT-tw)}}\label{subsec: exact solutions of SKT}


Inspired by the exact periodic solutions proposed in \cite{Guedda-Kersner-Klincsik-Logak-14-Exact-fronts-periodic}, we make the ans\"atz for solving \eqref{eqn: 3 species TWS nonexistence} as follows:
\begin{equation}\label{eqn: ansatz for solns Cos}
\begin{cases}
\vspace{3mm}
u(x) = k_1+m_1\,\cos\,(\mu\,x), \ \ &x\in\mathbb{R},\\
\vspace{3mm}
v(x) = k_2+m_2\,\cos\,(\mu\,x), \ \ &x\in\mathbb{R},\\
w(x) = k_3+m_3\,\cos\,(\mu\,x), \ \ &x\in\mathbb{R},
\end{cases}
\end{equation}
where $\mu\neq0$, $k_1$, $k_2$, $k_3>0$ and $m_1\neq0$, $m_2\neq0$, $m_3\neq0$ with $\left| {m_1} \right|\le k_1$, $\left| {m_2} \right|\le k_2$, and $\left| {m_3} \right|\le k_3$ are constants to be determined.  
Inserting ans\"{a}tz \eqref{eqn: ansatz for solns Cos} into \eqref{eqn: 3 species TWS nonexistence}, we obtain\begin{eqnarray}\nonumber
d_1(u^2)_{xx}+\theta\,u_x+u\,(\sigma_1-c_{11}\,u-c_{12}\,v-c_{13}\,w) &=& 
\zeta_0+\zeta_1\,\mathcal{C}(x)+\zeta_2\,\mathcal{C}^2(x)+\zeta_3\,\mathcal{}S(x), \\ \notag
d_2(v^2)_{xx}+\theta\,v_x+v\,(\sigma_2-c_{21}\,u-c_{22}\,v-c_{23}\,w) &=& 
\xi_0+\zeta_1\,\mathcal{C}(x)+\xi_2\,\mathcal{C}^2(x)+\xi_3\,\mathcal{S}(x), \\ \notag
d_3(w^2)_{xx}+\theta\,w_x+w\,(\sigma_3-c_{31}\,u-c_{32}\,v-c_{33}\,w) &=& 
\varsigma_0+\varsigma_1\,\mathcal{C}(x)+\varsigma_2\,\mathcal{C}^2(x)+\varsigma_3\,\mathcal{S}(x), \notag
\end{eqnarray}
where $\mathcal{C}(x):=\cos\,(\mu\,x)$, $\mathcal{S}(x):=\sin\,(\mu\,x)$ and 
\begin{eqnarray}
\label{}
  \zeta_0&=&-c_{11}\,k_1^2-c_{12}\,k_2\,k_1-c_{13}\,k_3\,k_1+2\,d_1\,\mu ^2\,m_1^2+k_1\,\sigma _1,   \\
  \zeta_1&=&-2\,c_{11}\,k_1\,m_1-c_{12}\,k_2\,m_1-c_{13}\,k_3\,m_1-c_{12}\,k_1\,m_2 \\ \notag
               & &-c_{13}\,k_1\,m_3-2\,d_1\, k_1\,\mu ^2\,m_1+m_1\,\sigma _1,   \\ 
  \zeta_2&=&-c_{11}\,m_1^2-c_{12}\,m_2\,m_1-c_{13}\,m_3\,m_1-4\,d_1\,\mu ^2\,m_1^2,  \\
  \zeta_3&=&\theta\,\mu\,m_1, 
\end{eqnarray}
\begin{eqnarray}
\label{}
  \xi_0&=&-c_{22}\,k_2^2-c_{21}\,k_1\,k_2-c_{23}\,k_3\,k_2+2\,d_2\,\mu^2\,m_2^2+k_2\,\sigma _2,   \\
  \xi_1&=&-c_{21}\,k_2\,m_1-c_{21}\,k_1\,m_2-2\,c_{22}\,k_2\,m_2-c_{23}\,k_3\,m_2 \\ \notag
               & &-c_{23}\,k_2\,m_3-2\,d_2\,k_2\,\mu ^2\,m_2+m_2\,\sigma _2,   \\ 
  \xi_2&=&-c_{22}\,m_2^2-c_{21}\,m_1\,m_2-c_{23}\,m_3\,m_2-4\,d_2\,\mu ^2\,m_2^2,  \\
  \xi_3&=&\theta\,\mu\,m_2, 
\end{eqnarray}
\begin{eqnarray}
\label{}
  \varsigma_0&=&-c_{33}\,k_3^2-c_{31}\,k_1\,k_3-c_{32}\,k_2\,k_3+2\,d_3\,\mu ^2\,m_3^2+k_3\,\sigma _3,   \\
  \varsigma_1&=&-c_{31}\,k_3\,m_1-c_{32}\,k_3\,m_2-c_{31}\,k_1\,m_3-c_{32}\,k_2\,m_3 \\ \notag
               & &-2\,c_{33}\,k_3\,m_3-2\,d_3\,k_3\,\mu ^2\,m_3+m_3\,\sigma _3,   \\ 
  \varsigma_2&=&-c_{33}\,m_3^2-c_{31}\,m_1\,m_3-c_{32}\,m_2\,m_3-4\,d_3\,\mu ^2\,m_3^2,  \\
  \varsigma_3&=&\theta\,\mu\,m_3.
\end{eqnarray}
Equating the coefficients of powers of $\mathcal{C}(x)$ and $\mathcal{S}(x)$ to zero yields a system of 12 equations:
\begin{alignat}{3}\label{eqn: Cos method}
\zeta_i & =0 \ \ (i=0,1,2,3), & \qquad 
\xi_i & =0 \ \ (i=0,1,2,3), & \qquad 
\varsigma_i & =0 \ \ (i=0,1,2,3).
\end{alignat}
It turns out that, with $m_i$, $d_i$, $c_{ij}$ $(i,j=1,2,3,i\neq j)$, and $\mu$ being free parameters, \eqref{eqn: Cos method} can be solved to give
\begin{alignat}{4}\label{eqn: soln of algebraic eqn tanh method}
k_1 & =-m_1, & \qquad 
\sigma_1 & =2\,\left(c_{12}\,m_2+c_{13}\,m_3+3\,d_1\,\mu ^2\,m_1\right), & \qquad 
c_{11} & =-m_1^{-1}(c_{12}\,m_2+c_{13}\,m_3+4\,d_1\,\mu ^2\,m_1), & \qquad
\\ \notag
k_2 & =m_2, & \qquad 
\sigma_2 & =-2\,\left(c_{21}\,m_1+3\,d_2\,\mu ^2\,m_2\right), & \qquad 
c_{22} & =-m_2^{-1}(c_{21}\,m_1+c_{23}\,m_3+4\,d_2\,\mu ^2\,m_2), & \qquad
\\ \notag
k_3 & =m_3, & \qquad 
\sigma_3 & =-2\,\left(c_{31}\,m_1+3\,d_3\,\mu ^2\,m_3\right), & \qquad 
c_{33} & =-m_3^{-1}(c_{31}\,m_1+c_{32}\,m_2+4\,d_3\,\mu ^2\,m_3), & \qquad
\\ \notag
\theta & =0.
\end{alignat}
We note that $\zeta_3=\xi_3=\varsigma_3=0$ immediately leads to $\theta=0$. The result obtained is summarized in the following

\begin{theorem}\label{thm: exact TWS appendix Cosine soln}
System \eqref{eqn: 3 species TWS nonexistence} has a solution of the form \eqref{eqn: ansatz for solns Cos} provided that \eqref{eqn: soln of algebraic eqn tanh method} holds.
\end{theorem}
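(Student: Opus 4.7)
The proof is a direct verification: one substitutes the sinusoidal ansatz \eqref{eqn: ansatz for solns Cos} into the three-species system \eqref{eqn: 3 species TWS nonexistence}, expands, and checks that the resulting algebraic identities are satisfied precisely under the parameter relations \eqref{eqn: soln of algebraic eqn tanh method}. The plan is to reduce each of the three PDEs to an identity in $x \in \mathbb{R}$ lying in the four-dimensional function space spanned by $\{1,\cos(\mu x),\cos^2(\mu x),\sin(\mu x)\}$, and then to exploit linear independence to obtain a system of $12$ scalar equations.

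First I would compute the three building blocks of each equation separately. Writing $u_i^2 = k_i^2 + 2k_im_i\cos(\mu x) + m_i^2\cos^2(\mu x)$ and using $\cos(2\mu x) = 2\cos^2(\mu x) - 1$, one obtains
\[
(u_i^2)_{xx} = 2\mu^2 m_i^2 - 2\mu^2 k_i m_i\cos(\mu x) - 4\mu^2 m_i^2\cos^2(\mu x).
\]
Meanwhile $\theta (u_i)_x = -\theta\mu m_i\sin(\mu x)$ contributes the only $\sin$-term, and the reaction term $u_i(\sigma_i - \sum_j c_{ij}u_j)$ expands as a polynomial of degree $2$ in $\cos(\mu x)$. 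Summing these contributions in the $i$-th equation yields exactly $\zeta_0 + \zeta_1\cos(\mu x) + \zeta_2\cos^2(\mu x) + \zeta_3\sin(\mu x)$ for $i=1$, and analogous expressions for $i=2,3$ as tabulated in the excerpt.

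By linear independence of $\{1,\cos(\mu x),\cos^2(\mu x),\sin(\mu x)\}$ on $\mathbb{R}$ (which uses $\mu\ne 0$), the ansatz solves the system if and only if all twelve coefficients $\zeta_j,\xi_j,\varsigma_j$ ($j=0,1,2,3$) vanish, i.e.\ \eqref{eqn: Cos method} holds. Solving then proceeds in a natural order: the three $\sin$-equations $\zeta_3 = \xi_3 = \varsigma_3 = 0$ read $\theta\mu m_i = 0$, so since $\mu\ne 0$ and $m_i\ne 0$ we conclude $\theta = 0$; each $\zeta_2$-, $\xi_2$-, $\varsigma_2$-equation, after dividing by the nonzero $m_i$, is linear in the diagonal competition coefficient $c_{ii}$ and yields the displayed formula for $c_{ii}$; finally, combining the constant and $\cos$-coefficient relations after eliminating $c_{ii}$ forces $k_i^2 = m_i^2$, which justifies the normalization $k_1 = -m_1$, $k_2 = m_2$, $k_3 = m_3$ in \eqref{eqn: soln of algebraic eqn tanh method}, and the prescribed value of $\sigma_i$ then drops out of the constant-coefficient equation.

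There is no conceptual obstacle; the work is algebraic bookkeeping among twelve polynomial relations in many parameters. The subtlest step is verifying the compatibility between $\zeta_0 = 0$ and $\zeta_1 = 0$ once $c_{11}$ has been eliminated: one must check that the prescribed choices of $k_1$ and $\sigma_1$ make both equations consistent rather than overdetermined, and this is precisely where the constraint $k_1 = \pm m_1$ emerges. Once this is done for one species, the argument applies mutatis mutandis to the other two by the symmetric role each species plays in the system, completing the proof.
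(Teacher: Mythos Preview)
Your proposal is correct and follows essentially the same route as the paper: substitute the cosine ansatz, expand each equation as a linear combination of $1$, $\cos(\mu x)$, $\cos^2(\mu x)$, $\sin(\mu x)$, and set all twelve coefficients to zero to obtain the algebraic system \eqref{eqn: Cos method}, which is solved by the parameter choices \eqref{eqn: soln of algebraic eqn tanh method}. If anything, you give more detail than the paper does about the order in which the system is solved (first $\theta=0$ from the $\sin$-coefficients, then $c_{ii}$ from the $\cos^2$-coefficients, then $k_i=\pm m_i$ and $\sigma_i$ from the remaining two), whereas the paper simply records the end result.
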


In view of Theorem~\ref{thm: exact TWS appendix Cosine soln}, \eqref{eqn: 3 species TWS nonexistence} has the solution
\begin{equation}\label{eqn: ansatz for solns Cos example}
\begin{cases}
\vspace{3mm}
u(x) = \displaystyle\frac{1}{10}\big(1-\cos\,(2\,x)\big), \ \ &x\in\mathbb{R},\\
\vspace{3mm}
v(x) = \displaystyle\frac{1}{11}\big(1+\cos\,(2\,x)\big), \ \ &x\in\mathbb{R},\\
w(x) = \displaystyle\frac{1}{12}\big(1+\cos\,(2\,x)\big), \ \ &x\in\mathbb{R},
\end{cases}
\end{equation}
when $d_i=\sigma_i=c_{ii}=1$ $(i=1,2,3)$, $c_{12}= \displaystyle\frac{1067}{60}$, $c_{13}=1$, $c_{21}= \displaystyle\frac{175}{11}$, $c_{23}= \displaystyle\frac{6}{11}$, $c_{31}=15$, $c_{32}= \displaystyle\frac{11}{12}$, and $\theta=0$. The resulting profiles of \eqref{eqn: ansatz for solns Cos example} are shown in Figure~\ref{fig: Morisita-ShigesadaPeriodicSoln}.

\begin{figure}[ht!]
\centering
\mbox{
{\includegraphics[width=0.60\textwidth]{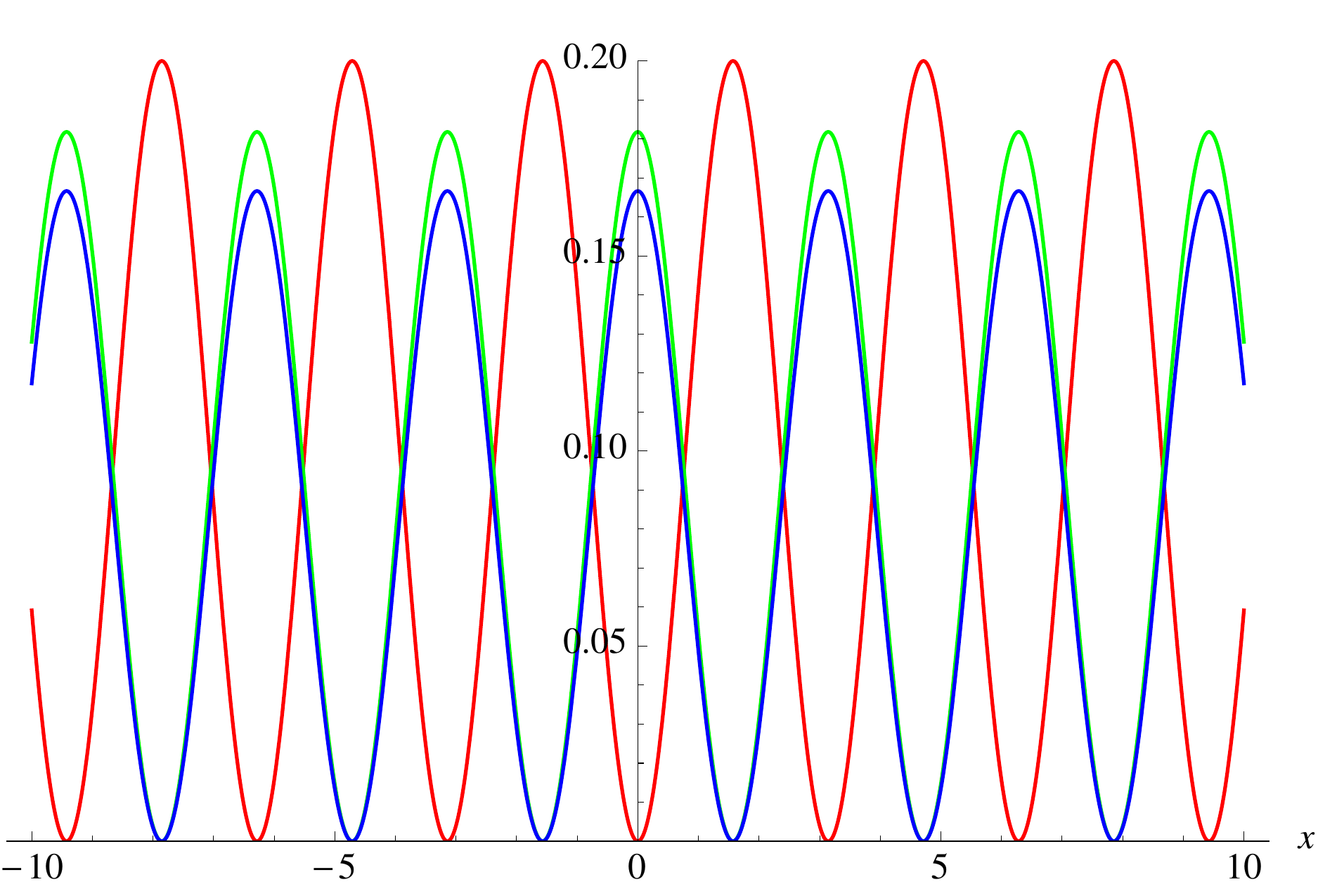}
             } \quad \hspace{0mm}
             }
\caption{\small Red: $u(x) =\displaystyle\frac{1}{10}\big(1-\cos\,(2\,x)\big)$; green: $v(x) =\displaystyle\frac{1}{11}\big(1+\cos\,(2\,x)\big)$; blue: $w(x) =\displaystyle\frac{1}{12}\big(1+\cos\,(2\,x)\big)$. 
\label{fig: Morisita-ShigesadaPeriodicSoln}}
\end{figure}


\begin{thebibliography}{10}
\providecommand{\url}[1]{\texttt{#1}}
\providecommand{\urlprefix}{URL }
\expandafter\ifx\csname urlstyle\endcsname\relax
  \providecommand{\doi}[1]{doi:\discretionary{}{}{}#1}\else
  \providecommand{\doi}{doi:\discretionary{}{}{}\begingroup
  \urlstyle{rm}\Url}\fi

\bibitem{Adamson12SpeciesCyclicCompetition}
\textit{M.~W. Adamson} and \textit{A.~Y. Morozov}, Revising the role of species
  mobility in maintaining biodiversity in communities with cyclic competition,
  Bull. Math. Biol. \textbf{74} (2012), no.~9, 2004--2031,
  \doi{10.1007/s11538-012-9743-z},
  \urlprefix\url{http://dx.doi.org/10.1007/s11538-012-9743-z}.

\bibitem{Armstrong80Competitive-exclusion}
\textit{R.~A. Armstrong} and \textit{R.~McGehee}, Competitive exclusion, Amer.
  Natur. \textbf{115} (1980), no.~2, 151--170, \doi{10.1086/283553},
  \urlprefix\url{http://dx.doi.org/10.1086/283553}.

\bibitem{Baczkowski98Generalized-diversity-index}
\textit{A.~J. Baczkowski}, \textit{D.~N. Joanes} and \textit{G.~M. Shamia},
  Range of validity of {$\alpha$} and {$\beta$} for a generalized diversity
  index {$H(\alpha,\beta)$} due to {G}ood, Math. Biosci. \textbf{148} (1998),
  no.~2, 115--128, \doi{10.1016/S0025-5564(97)10013-X},
  \urlprefix\url{http://dx.doi.org/10.1016/S0025-5564(97)10013-X}.

\bibitem{CantrellWard97Competition-mediatedCoexistence}
\textit{R.~S. Cantrell} and \textit{J.~R. Ward, Jr.}, On competition-mediated
  coexistence, SIAM J. Appl. Math. \textbf{57} (1997), no.~5, 1311--1327,
  \doi{10.1137/S0036139995292367},
  \urlprefix\url{http://dx.doi.org/10.1137/S0036139995292367}.

\bibitem{JDE-16}
\textit{C.-C. Chen} and \textit{L.-C. Hung}, A maximum principle for diffusive
  lotka-volterra systems of two competing species, J. Differential Equations
  \urlprefix\url{http://dx.doi.org/10.1016/j.jde.2016.07.001}.

\bibitem{CPAA-16}
\textit{C.-C. Chen} and \textit{L.-C. Hung}, Nonexistence of traveling wave
  solutions, exact and semi-exact traveling wave solutions for diffusive
  {L}otka-{V}olterra systems of three competing species, Commun. Pure Appl.
  Anal. \textbf{15} (2016), no.~4, 1451--1469, \doi{10.3934/cpaa.2016.15.1451},
  \urlprefix\url{http://dx.doi.org/10.3934/cpaa.2016.15.1451}.

\bibitem{NBMP-n-species}
\textit{C.-C. Chen}, \textit{L.-C. Hung} and \textit{C.-C. Lai}, An n-barrier
  maximum principle for autonomous systems of n species and its application to
  problems arising from population dynamics, submitted .

\bibitem{CHMU}
\textit{C.-C. Chen}, \textit{L.-C. Hung}, \textit{M.~Mimura} and
  \textit{D.~Ueyama}, Exact travelling wave solutions of three-species
  competition-diffusion systems, Discrete Contin. Dyn. Syst. Ser. B \textbf{17}
  (2012), no.~8, 2653--2669, \doi{10.3934/dcdsb.2012.17.2653},
  \urlprefix\url{http://dx.doi.org/10.3934/dcdsb.2012.17.2653}.

\bibitem{Zeeman98Three-dimensionalCompetitiveLV}
\textit{P.~van~den Driessche} and \textit{M.~L. Zeeman}, Three-dimensional
  competitive {L}otka-{V}olterra systems with no periodic orbits, SIAM J. Appl.
  Math. \textbf{58} (1998), no.~1, 227--234, \doi{10.1137/S0036139995294767},
  \urlprefix\url{http://dx.doi.org/10.1137/S0036139995294767}.

\bibitem{EiMimuraIkota99SegregatingCompetition-diffusion}
\textit{S.-I. Ei}, \textit{R.~Ikota} and \textit{M.~Mimura}, Segregating
  partition problem in competition-diffusion systems, Interfaces Free Bound.
  \textbf{1} (1999), no.~1, 57--80, \doi{10.4171/IFB/4},
  \urlprefix\url{http://dx.doi.org/10.4171/IFB/4}.

\bibitem{Good53Estimation-population-parameters}
\textit{I.~J. Good}, The population frequencies of species and the estimation
  of population parameters, Biometrika \textbf{40} (1953), 237--264.

\bibitem{Grossberg78Decision-Patterns-Oscillations-LVcompetitive}
\textit{S.~Grossberg}, Decisions, patterns, and oscillations in nonlinear
  competitve systems with applications to {V}olterra-{L}otka systems, J.
  Theoret. Biol. \textbf{73} (1978), no.~1, 101--130,
  \doi{10.1016/0022-5193(78)90182-0},
  \urlprefix\url{http://dx.doi.org/10.1016/0022-5193(78)90182-0}.

\bibitem{Guedda-Kersner-Klincsik-Logak-14-Exact-fronts-periodic}
\textit{M.~Guedda}, \textit{R.~Kersner}, \textit{M.~Klincsik} and
  \textit{E.~Logak}, Exact wavefronts and periodic patterns in a competition
  system with nonlinear diffusion, Discrete Contin. Dyn. Syst. Ser. B
  \textbf{19} (2014), no.~6, 1589--1600, \doi{10.3934/dcdsb.2014.19.1589},
  \urlprefix\url{http://dx.doi.org/10.3934/dcdsb.2014.19.1589}.

\bibitem{Gurney-Nisbet75inhomogeneous-populations}
\textit{W.~Gurney} and \textit{R.~Nisbet}, The regulation of inhomogeneous
  populations, Journal of Theoretical Biology \textbf{52} (1975), no.~2,
  441--457.

\bibitem{Gurney-Nisbet76non-linear-population}
\textit{W.~Gurney} and \textit{R.~Nisbet}, A note on non-linear population
  transport, Journal of theoretical biology \textbf{56} (1976), no.~1,
  249--251.

\bibitem{Gyllenberg09LV3species-Heteroclinic}
\textit{M.~Gyllenberg} and \textit{P.~Yan}, On a conjecture for
  three-dimensional competitive {L}otka-{V}olterra systems with a heteroclinic
  cycle, Differ. Equ. Appl. \textbf{1} (2009), no.~4, 473--490,
  \doi{10.7153/dea-01-26}, \urlprefix\url{http://dx.doi.org/10.7153/dea-01-26}.

\bibitem{Hallam79Persistence-Extinction3speciesLV}
\textit{T.~G. Hallam}, \textit{L.~J. Svoboda} and \textit{T.~C. Gard},
  Persistence and extinction in three species {L}otka-{V}olterra competitive
  systems, Math. Biosci. \textbf{46} (1979), no. 1-2, 117--124,
  \doi{10.1016/0025-5564(79)90018-X},
  \urlprefix\url{http://dx.doi.org/10.1016/0025-5564(79)90018-X}.

\bibitem{Hsu08Competitive-exclusion}
\textit{S.-B. Hsu} and \textit{T.-H. Hsu}, Competitive exclusion of microbial
  species for a single nutrient with internal storage, SIAM J. Appl. Math.
  \textbf{68} (2008), no.~6, 1600--1617, \doi{10.1137/070700784},
  \urlprefix\url{http://dx.doi.org/10.1137/070700784}.

\bibitem{Hsu-Smith-Waltman96Competitive-exclusion-coexistence-Competitive}
\textit{S.~B. Hsu}, \textit{H.~L. Smith} and \textit{P.~Waltman}, Competitive
  exclusion and coexistence for competitive systems on ordered {B}anach spaces,
  Trans. Amer. Math. Soc. \textbf{348} (1996), no.~10, 4083--4094,
  \doi{10.1090/S0002-9947-96-01724-2},
  \urlprefix\url{http://dx.doi.org/10.1090/S0002-9947-96-01724-2}.

\bibitem{hung2012JJIAM}
\textit{L.-C. Hung}, Exact traveling wave solutions for diffusive
  {L}otka-{V}olterra systems of two competing species, Jpn. J. Ind. Appl. Math.
  \textbf{29} (2012), no.~2, 237--251, \doi{10.1007/s13160-012-0056-2},
  \urlprefix\url{http://dx.doi.org/10.1007/s13160-012-0056-2}.

\bibitem{Jang13Competitive-exclusion-Leslie-Gower-competition-Allee}
\textit{S.~R.-J. Jang}, Competitive exclusion and coexistence in a
  {L}eslie-{G}ower competition model with {A}llee effects, Appl. Anal.
  \textbf{92} (2013), no.~7, 1527--1540, \doi{10.1080/00036811.2012.692365},
  \urlprefix\url{http://dx.doi.org/10.1080/00036811.2012.692365}.

\bibitem{Kastendiek82Competitor-mediatedCoexistence3Species}
\textit{J.~Kastendiek}, Competitor-mediated coexistence: interactions among
  three species of benthic macroalgae, Journal of Experimental Marine Biology
  and Ecology \textbf{62} (1982), no.~3, 201--210.

\bibitem{KoRyuAhn14Coexistence3Competing-species}
\textit{W.~Ko}, \textit{K.~Ryu} and \textit{I.~Ahn}, Coexistence of three
  competing species with non-negative cross-diffusion rate, J. Dyn. Control
  Syst. \textbf{20} (2014), no.~2, 229--240, \doi{10.1007/s10883-014-9219-6},
  \urlprefix\url{http://dx.doi.org/10.1007/s10883-014-9219-6}.

\bibitem{Maier13Integration3-dimensionalLV}
\textit{R.~S. Maier}, The integration of three-dimensional {L}otka-{V}olterra
  systems, Proc. R. Soc. Lond. Ser. A Math. Phys. Eng. Sci. \textbf{469}
  (2013), no. 2158, 20120693, 27, \doi{10.1098/rspa.2012.0693},
  \urlprefix\url{http://dx.doi.org/10.1098/rspa.2012.0693}.

\bibitem{McGehee77Competitive-exclusion}
\textit{R.~McGehee} and \textit{R.~A. Armstrong}, Some mathematical problems
  concerning the ecological principle of competitive exclusion, J. Differential
  Equations \textbf{23} (1977), no.~1, 30--52.

\bibitem{Mimura15DynamicCoexistence3species}
\textit{M.~Mimura} and \textit{M.~Tohma}, Dynamic coexistence in a
  three-species competition--diffusion system, Ecological Complexity
  \textbf{21} (2015), 215--232.

\bibitem{Murray93MathBio}
\textit{J.~D. Murray}, Mathematical biology, \textit{Biomathematics},
  volume~19, Springer-Verlag, Berlin, 1993, 2nd edition, \doi{10.1007/b98869},
  \urlprefix\url{http://dx.doi.org/10.1007/b98869}.

\bibitem{Pablo-Sanchez98Porous-FisherTW}
\textit{A.~de~Pablo} and \textit{A.~S{\'a}nchez}, Travelling wave behaviour for
  a porous-{F}isher equation, European J. Appl. Math. \textbf{9} (1998), no.~3,
  285--304, \doi{10.1017/S0956792598003465},
  \urlprefix\url{http://dx.doi.org/10.1017/S0956792598003465}.

\bibitem{PetrovskiiShigesada01Spatio-temporalThree-competitive-species}
\textit{S.~Petrovskii}, \textit{K.~Kawasaki}, \textit{F.~Takasu} and
  \textit{N.~Shigesada}, Diffusive waves, dynamical stabilization and
  spatio-temporal chaos in a community of three competitive species, Japan J.
  Indust. Appl. Math. \textbf{18} (2001), no.~2, 459--481,
  \doi{10.1007/BF03168586},
  \urlprefix\url{http://dx.doi.org/10.1007/BF03168586}.

\bibitem{Ramezan11Shannon-diversity-index}
\textit{H.~Ramezani} and \textit{S.~Holm}, Sample based estimation of landscape
  metrics; accuracy of line intersect sampling for estimating edge density and
  {S}hannon's diversity index, Environ. Ecol. Stat. \textbf{18} (2011), no.~1,
  109--130, \doi{10.1007/s10651-009-0123-2},
  \urlprefix\url{http://dx.doi.org/10.1007/s10651-009-0123-2}.

\bibitem{Rodrigo&Mimura00}
\textit{M.~Rodrigo} and \textit{M.~Mimura}, Exact solutions of a
  competition-diffusion system, Hiroshima Math. J. \textbf{30} (2000), no.~2,
  257--270.

\bibitem{Rodrigo&Mimura01}
\textit{M.~Rodrigo} and \textit{M.~Mimura}, Exact solutions of
  reaction-diffusion systems and nonlinear wave equations, Japan J. Indust.
  Appl. Math. \textbf{18} (2001), no.~3, 657--696.

\bibitem{Sanchez-Maini94SharpTW-degenerate-Fisher}
\textit{F.~S{\'a}nchez-Gardu{\~n}o} and \textit{P.~K. Maini}, Existence and
  uniqueness of a sharp travelling wave in degenerate non-linear diffusion
  {F}isher-{KPP} equations, J. Math. Biol. \textbf{33} (1994), no.~2, 163--192,
  \doi{10.1007/BF00160178},
  \urlprefix\url{http://dx.doi.org/10.1007/BF00160178}.

\bibitem{Sanchez-Maini95Degenerate-RD-TW}
\textit{F.~S{\'a}nchez-Gardu{\~n}o} and \textit{P.~K. Maini}, Travelling wave
  phenomena in some degenerate reaction-diffusion equations, J. Differential
  Equations \textbf{117} (1995), no.~2, 281--319, \doi{10.1006/jdeq.1995.1055},
  \urlprefix\url{http://dx.doi.org/10.1006/jdeq.1995.1055}.

\bibitem{Sherratt-Marchant96NonsharpTWdegenerate-nonlinear-diffusion}
\textit{J.~A. Sherratt} and \textit{B.~P. Marchant}, Nonsharp travelling wave
  fronts in the {F}isher equation with degenerate nonlinear diffusion, Appl.
  Math. Lett. \textbf{9} (1996), no.~5, 33--38,
  \doi{10.1016/0893-9659(96)00069-9},
  \urlprefix\url{http://dx.doi.org/10.1016/0893-9659(96)00069-9}.

\bibitem{Shigesada-Kawasaki-Teramoto-79-SKT-model}
\textit{N.~Shigesada}, \textit{K.~Kawasaki} and \textit{E.~Teramoto}, Spatial
  segregation of interacting species, J. Theoret. Biol. \textbf{79} (1979),
  no.~1, 83--99, \doi{10.1016/0022-5193(79)90258-3},
  \urlprefix\url{http://dx.doi.org/10.1016/0022-5193(79)90258-3}.

\bibitem{Simpson49Measurement-diversity}
\textit{E.~H. Simpson}, Measurement of diversity., Nature  (1949).

\bibitem{Smith94Competition}
\textit{H.~L. Smith} and \textit{P.~Waltman}, Competition for a single limiting
  resource in continuous culture: the variable-yield model, SIAM J. Appl. Math.
  \textbf{54} (1994), no.~4, 1113--1131, \doi{10.1137/S0036139993245344},
  \urlprefix\url{http://dx.doi.org/10.1137/S0036139993245344}.

\bibitem{Witelski95MergingTW-porous-Fisher}
\textit{T.~P. Witelski}, Merging traveling waves for the porous-{F}isher's
  equation, Appl. Math. Lett. \textbf{8} (1995), no.~4, 57--62,
  \doi{10.1016/0893-9659(95)00047-T},
  \urlprefix\url{http://dx.doi.org/10.1016/0893-9659(95)00047-T}.

\bibitem{Zeeman93Hopf-bifurcationsCompetitive3speciesLV}
\textit{M.~L. Zeeman}, Hopf bifurcations in competitive three-dimensional
  {L}otka-{V}olterra systems, Dynam. Stability Systems \textbf{8} (1993),
  no.~3, 189--217, \doi{10.1080/02681119308806158},
  \urlprefix\url{http://dx.doi.org/10.1080/02681119308806158}.

\bibitem{Barenblatt&Zeldovich80MathThy-Combustion-Explosion}
\textit{J.~B. Zeldovi{\v{c}}}, \textit{G.~I. Barenblatt},
  \textit{V.~B. Librovi{\v{c}}} and \textit{G.~M. Mahviladze}, Matematicheskaya
  teoriya goreniya i vzryva (Mathematical theory of combustion and explosion),
  ``Nauka'', Moscow, 1980.

\bibitem{Zeldovich-51-flame-propagation}
\textit{Y.~B. Zeldovich}, Theory of flame propagation  (1951).

\end{thebibliography}
\end{document}